\let\ams@starttoc\@starttoc
\let\@starttoc\ams@starttoc
\patchcmd{\@starttoc}{\makeatletter}{\makeatletter\parskip\z@}{}{}
\newtheorem{thm}{Theorem}
\newtheorem{thm*}{Theorem}
\newtheorem{prop}{Proposition}
\newtheorem{lma}[prop]{Lemma}
\newtheorem{cor}[prop]{Corollary}
\theoremstyle{definition}
\newtheorem{df}[prop]{Definition} 
\theoremstyle{remark}
\newtheorem{rmk}[prop]{Remark} 
\newcommand{\F}{{\mathbb{F}}}
\newcommand{\R}{{\mathbb{R}}}
\newcommand{\Z}{{\mathbb{Z}}}
\newcommand{\C}{{\mathbb{C}}}
\newcommand{\Q}{{\mathbb{Q}}}
\newcommand{\D}{{\mathbb{D}}}
\newcommand{\bK}{{\mathbb{K}}}
\newcommand{\bF}{{\mathbb{F}}}
\newcommand{\del}{\partial}
\newcommand{\sm}[1]{C^\infty(#1)}
\newcommand{\K}{\mathcal{K}}
\newcommand{\cL}{\mathcal{L}}
\newcommand{\til}[1]{\widetilde{#1}}
\newcommand{\wh}[1]{\widehat{#1}}
\renewcommand{\check}[1]{\widehat{#1}}
\newcommand{\ol}[1]{\overline{#1}}
\newcommand{\zp}{{\mathbb{Z}/(p)}}
\newcommand{\rp}{{\mathcal{R}_p}}
\newcommand{\hrp}{{\widehat{\cl R}_p}}
\newcommand{\lamzero}{\Lambda^0}
\newcommand{\lambzero}{\Lambda^0}
\newcommand{\tphi}{{\til{\phi}}}
\DeclareMathOperator{\ima}{\mathrm{im}}
\newcommand{\tmin}{{\text{min},\bK}}
\newcommand{\tmon}{{\text{mon},\bK}}
\newcommand{\tuniv}{{\text{univ},\bK}}
\newcommand{\om}{\omega}
\newcommand{\eps}{\epsilon}
\newcommand{\De}{\Delta}
\newcommand{\extheta}{\left < \theta \right >}
\newcommand{\cA}{\mathcal{A}}
\newcommand{\cB}{\mathcal{B}}
\newcommand{\cC}{\mathcal{C}}
\newcommand{\cD}{\mathcal{D}}
\newcommand{\cE}{\mathcal{E}}
\newcommand{\cF}{\mathcal{F}}
\newcommand{\cH}{\mathcal{H}}
\newcommand{\cJ}{\mathcal{J}}
\newcommand{\cK}{\mathcal{K}}
\newcommand{\cO}{\mathcal{O}}
\newcommand{\cP}{\mathcal{P}}
\newcommand{\cR}{\mathcal{R}}
\newcommand{\fix}{\mathrm{Fix}}
\def\mrm#1{{\mathrm{#1}}}
\def\bb#1{{\mathbb{#1}}}
\def\cl#1{{\mathcal{#1}}}
\newcommand{\bra}[1]{{\{ #1 \}}}
\newcommand{\brat}[1]{{\left< #1 \right>}}
\newcommand{\tens}{\otimes}
\DeclareMathOperator{\supp}{\mathrm{supp}}
\DeclareMathOperator{\Ham}{\mathrm{Ham}}
\DeclareMathOperator{\Tor}{\mathrm{Tor}^1}
\DeclareMathOperator{\Symp}{\mathrm{Symp}}
\DeclareMathOperator{\im}{\mathrm{im}}
\DeclareMathOperator{\id}{\mathrm{id}}
\DeclareMathOperator{\spec}{\mathrm{Spec}}
\DeclareMathOperator{\Spec}{\mathrm{Spec}}
\DeclareMathOperator{\loc}{\mathrm{loc}}
\DeclareMathOperator{\tot}{\mathrm{tot}}
\DeclareMathOperator{\pemod}{{\mathbf{pmod}}}
\DeclareMathOperator{\barc}{{\mathbf{barcodes}}}
\DeclareMathOperator{\ind}{\mathrm{ind}}
\def\Hk{H^{(p)}}
\def\H2{H^{(2)}}
\newcommand{\esemail}{shelukhin@dms.umontreal.ca}
\begin{document}

	\title{On the Hofer-Zehnder conjecture}

	\author{Egor Shelukhin}
	\address{Egor Shelukhin, Department of Mathematics and Statistics,
		University of Montreal, C.P. 6128 Succ.  Centre-Ville Montreal, QC
		H3C 3J7, Canada}
	\email{\esemail}

\begin{abstract}
We prove that if a Hamiltonian diffeomorphism of a closed monotone symplectic manifold with semisimple quantum homology has more contractible fixed points, counted homologically, than the total dimension of the homology of the manifold, then it must have an infinite number of contractible periodic points. This constitutes a higher-dimensional homological generalization of a celebrated result of Franks from 1992, as conjectured by Hofer and Zehnder in 1994.			
	
\end{abstract}


\subjclass[2010]{53D40, 37J10, 37J45}
	
\maketitle

\tableofcontents

\section{Setup}

\begin{df}
We call a closed symplectic manifold $(M,\om)$ {\em monotone} if the class of the symplectic form and the first Chern class are positively proportional \[[\om] = \kappa \cdot c_1(TM),\] for $\kappa > 0,$ on the image of the Hurewicz map $\pi_2(M) \to H_2(M,\Z).$ By suitably rescaling $\om,$ we can always assume that $\kappa = 2,$ which will be our convention in this paper.
\end{df}

\begin{rmk}
We mostly do not consider closed symplectic manifolds with $[\om] = 0,$ or $c_1(TM) = 0$ or when the constant $\kappa$ is negative, since in this case all Hamiltonian diffeomorphisms have an infinite number of contractible periodic points by the work of Hingston \cite{Hingston-torus}, Ginzburg \cite{Ginzburg-CC}, Ginzburg-G\"{u}rel and others (\cite{GG-revisited} and references therein), on the Conley conjecture, whence our main results are tautologically true. Indeed, our results are pertinent to the case when the Conley conjecture does {\em not} hold. 
\end{rmk}

For a diffeomorphism $\phi \in \Ham(M,\om),$ we denote $\fix(\phi) = \{ x \in M\,|\, \phi(x) =x \}.$ Given a Hamiltonian isotopy $\{\phi^t_H\},$ $\phi^1_H = \phi,$ for $H \in \cH = \sm{[0,1] \times M, \R},$ which we usually take to be normalized so that $H(t,-)$ has zero mean for all $t \in [0,1],$ there is a bijective correspondence between $\fix(\phi)$ and the $1$-periodic orbits of the isotopy. We shall use the two notions interchangeably. It is a consequence of Floer theory that the homotopy class of the loop $\alpha(x,\phi) = \{\phi_H^t(x)\}$ for $x\in \fix(\phi)$ does not depend on the choice of the Hamiltonian $H$ with $\phi^1_H = \phi.$ We shall consider only orbits $x$ that are {\em contractible}, that is $\alpha(x,\phi)$ is in the free homotopy class of a point. In fact, for brevity, we keep the notation $\fix(\phi)$ for the set of contractible fixed points of $\phi.$ Finally, we say that $\phi$ has isolated fixed points if $\fix(\phi)$ is a finite set.

We call $x$ a $k$-periodic point of $\phi$ if $\phi^k(x) = x,$ and we call it {\em simple} if $\phi^m(x) \neq x$ for all proper divisors $m$ of $k.$ Note that if $m$ divides $k$ then $\fix(\phi^m) \subset \fix(\phi^k).$ We denote by $x^{(k)} \in \fix(\phi^k)$ the image of $x \in \fix(\phi)$ under the inclusion $\fix(\phi) \subset \fix(\phi^k).$ Finally $x$ is a periodic point if it is $k$-periodic for some $k.$

To an isolated fixed point $x \in \fix(\phi)$ for $\phi \in \Ham(M,\om)$ one can associate a homology group, called the local Floer homology $HF^{\loc}(\phi,x)$ of $\phi$ at $x.$ While the definition shall be discussed in detail in Section \ref{subsubsec: local FH}, for now we note that if $x \in \fix(\phi)$ is a {\em non-degenerate} fixed point, that is $\ker(D\phi_x - \id) = 0,$ then as an ungraded $\bK$-module,  \begin{equation}\label{eq: HF loc non-degenerate}HF^{\loc}(\phi,x) \cong \bK.\end{equation} This suggests that $\dim_{\bK} HF^{\loc}(\phi,x)$ is a reasonable homological count of the fixed point $x.$

Furthermore, to $\phi \in \Ham(M,\om)$ with isolated fixed points, one can associate a filtered Floer homology theory \cite{Floer1,Floer2,Floer3} with coefficient field $\bK,$ with nice finiteness properties. Depending on the degree of detail of the data that we wish to extract, it may depend on a lift $\til{\phi}$ of $\phi$ to the universal cover $\til{\Ham}(M,\om).$ An essentially complete description of the resulting system of vector spaces and maps between them, formalized as a {\em persistence module}, consists of a finite multiset $\cB(\til{\phi},\bK) = \{(I_j, m_j)\},$ $m_j \geq 1,$ of intervals $I_j$ in $\R$ that are either finite, or infinite on the right. We call this multiset the Floer {\em barcode} associated to $\til{\phi}.$ We describe these notions, initially introduced in symplectic topology by Polterovich and Shelukhin \cite{PolShe} (see also \cite{PolSheSto, UsherZhang, FOOO-polydiscs}), in detail in Section \ref{sec: prelim}. For the time-being we denote by $K(\phi,\bK)$ the number of finite bars in $\cB(\til{\phi},\bK),$ and call their lengths $\beta_1(\phi,\bK) \leq \ldots \leq \beta_{K(\phi,\bK)}(\phi,\bK)$ arranged in increasing order the {\em bar-length spectrum} of $\phi.$ It depends only on $\phi.$ In particular so does $\beta(\phi,\bK) = \beta_{K(\phi,\bK)}(\phi,\bK),$ called Usher's {\em boundary depth} \cite{UsherBD1,UsherBD2} of $\phi,$ as well as the sum $\beta_{\tot}(\phi,\bK) = \beta_1(\phi,\bK) + \ldots + \beta_{K(\phi,\bK)}(\phi,\bK)$ of all the bar-lengths. Furthermore, a closely related notion introduced by Viterbo, Schwarz, and Oh \cite{Viterbo-specGF,Schwarz:action-spectrum,Oh-specnorm} is the spectral norm $\gamma(\phi,\bK)$ of $\phi \in \Ham(M,\om),$ which in particular satisfies the property $\gamma(\phi\phi') \leq \gamma(\phi) + \gamma(\phi')$ for all $\phi,\phi' \in \Ham(M,\om),$ and, by \cite{KS-bounds}, $\gamma(\phi) \geq \beta(\phi).$ These notions play an important role in this paper. Finally, we remark that a priori, the barcode of $\til{\phi}$ depends on a choice of ground coefficient field $\bK$ for Floer theory. However, it is easy to see that it remains invariant under field extensions. Furthermore, when the choice of field is immaterial for the statement, or is clear from the context, we omit it from the notation.

\section{Outline}

Our main result is as follows. It provides a solution to a conjecture of Hofer and Zehnder \cite[p. 263]{HZ-book}, the number of fixed points being interpreted homologically, for the broad class of symplectic manifolds whose even quantum homology algebra is semisimple. This class contains among other symplectic manifolds the complex projective spaces, the complex Grassmannians, and is closed under monotone products (cf. \cite{EntovPolterovichCalabiQM}). Furthermore, our result is effective, allowing to estimate from below the growth rate of the number of periodic points.

\begin{thm}\label{thm: main2}
	Let $(M,\omega)$ be a closed monotone symplectic manifold whose even quantum homology algebra over a ground field $\bK$ is semisimple. Then a Hamiltonian diffeomorphism $\phi \in \Ham(M,\om),$ whose set $\fix(\phi)$ of contractible fixed points is finite, and \[\sum_{x\in \fix(\phi)} \dim_{\bK} HF^{\loc}(\phi,x) > \dim_{\bK} H_*(M,\bK),\] must have an infinite number of contractible periodic points. In fact, if $\bK$ is of characteristic zero, then $\phi$ has a simple contractible $p$-periodic point for each sufficiently large prime $p.$
	\end{thm}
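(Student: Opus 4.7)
The plan is to argue by contradiction. Suppose $\phi$ has only finitely many contractible periodic points. Then for every sufficiently large prime $p$, no simple contractible $p$-periodic orbit exists, so iteration yields a bijection $\fix(\phi) \xrightarrow{\sim} \fix(\phi^p)$, $x \mapsto x^{(p)}$. Producing a contradiction for each such $p$ will, in characteristic zero, simultaneously yield the claimed simple contractible $p$-periodic orbit.

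First, I would control the local Floer homology of $\phi^p$ via the Ginzburg--G\"urel theory of \emph{admissible iterations}: in characteristic zero and for all sufficiently large primes $p$,
\[ \dim_{\bK} HF^{\loc}(\phi^p, x^{(p)}) \;=\; \dim_{\bK} HF^{\loc}(\phi, x) \qquad \text{for every } x \in \fix(\phi). \]
Writing $S = \sum_{x \in \fix(\phi)} \dim_{\bK} HF^{\loc}(\phi, x)$, the hypothesis $S > \dim_{\bK} H_*(M,\bK)$ thus transfers to $\phi^p$, and a standard barcode count yields $K(\phi^p;\bK) = \tfrac{1}{2}(S - \dim_{\bK} H_*(M,\bK))$, a positive integer independent of $p$; in particular $\beta_{\tot}(\phi;\bK) > 0$, so $\phi$ carries at least one genuine finite bar.

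The core input would then be a Smith-type inequality for the filtered Floer persistence module under $p$-fold iteration. Working over $\bF_p$-coefficients and using a $\mathbb{Z}/p$-equivariant pair-of-pants/localization construction (in the spirit of Seidel--Smith equivariant Floer theory, upgraded to the persistence setting), one should obtain an estimate of the form $\beta(\phi^p;\bF_p) \geq p \cdot \beta_1(\phi;\bF_p)$, so that the boundary depth of $\phi^p$ grows linearly in $p$. On the opposing side, semisimplicity of even $QH_*(M;\bK)$ provides, via the Entov--Polterovich theory of spectral invariants at primitive idempotents, a uniform (sublinear in $p$) upper bound on $\gamma(\phi^p;\bK)$; combined with $\gamma \geq \beta$ from \cite{KS-bounds} and a transfer between $\bF_p$- and $\bK$-coefficients (using invariance of barcodes under field extensions together with a comparison of Floer persistence modules in the two characteristics), this contradicts the linear growth established above.

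In my view the principal obstacle is the Smith-type persistence inequality itself: building a $\mathbb{Z}/p$-equivariant filtered Floer persistence module for $\phi^p$, identifying its localization with the Floer persistence module of $\phi$, and extracting the quantitative bar-length estimate requires a substantive extension of equivariant Floer technology to the filtered/barcode setting, and must also accommodate degenerate orbits where the local Floer contribution exceeds one. A secondary but genuine technicality is the coefficient-field juggle --- Smith theory naturally lives over $\bF_p$ while the hypothesis and the admissible iterations theorem live over $\bK$ of characteristic zero --- which must be handled either by a lifting argument or by a direct change-of-coefficients argument at the level of persistence modules.
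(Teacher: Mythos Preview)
Your overall architecture is right and matches the paper closely: reduce to $\Q$, pass to $\F_p$ for large primes, use Ginzburg--G\"urel to freeze the local data under admissible iteration, establish a Smith-type inequality $p\cdot\beta_{\tot}(\phi,\F_p)\le\beta_{\tot}(\phi^p,\F_p)$ via the $\Z/p$-equivariant product isomorphism, and play this against a uniform bound coming from semisimplicity. You also correctly flag the equivariant persistence construction and the $\bK\leftrightarrow\F_p$ transfer as the real work.

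However, there is a genuine gap on the ``easy'' side. You write that semisimplicity yields a uniform (or sublinear) bound on $\gamma(\phi^p;\bK)$ via Entov--Polterovich, and then invoke $\gamma\ge\beta$. This is false: the paper gives the explicit counterexample $(S^2\times S^2,\om_{st}\oplus\om_{st})$, where $\gamma(\psi^k)\sim k$ grows linearly despite semisimple quantum homology. The spectral norm $\gamma$ is \emph{not} bounded by semisimplicity. What Entov--Polterovich actually bound is the \emph{idempotent-modified} spectral norm $\gamma_E(\til\phi)=\max_j\big(c(e_j,\til\phi)+c(e_j,\til\phi^{-1})\big)$ for the splitting idempotents $e_j$, and this quantity is in general strictly smaller than $\gamma$. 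The paper then proves a separate coarse Lipschitz statement (Proposition~\ref{prop: beta leq gamma}): the bar-length spectrum, and in particular $\beta$, is controlled by $\gamma_E$ up to an additive $2n$. Combining these gives the uniform bound $\beta(\psi,\F_p)\le 8n$ directly, with no reference to $\gamma$ at all.

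Once you have the correct uniform bound on $\beta(\phi^p,\F_p)$ (not on $\gamma$), the contradiction also runs slightly differently from your sketch. One does not show $\beta(\phi^p)\to\infty$ against a $\gamma$-bound; rather, from $p\cdot\beta_{\tot}(\phi,\F_p)\le\beta_{\tot}(\phi^p,\F_p)\le K(\phi^p,\F_p)\cdot\beta(\phi^p,\F_p)\le K(\phi^p,\F_p)\cdot 8n$ one concludes that $K(\phi^p,\F_p)$, hence $N(\phi^p,\F_p)$, grows linearly in $p$, contradicting $N(\phi^p,\F_p)=N(\phi,\F_p)$ under the no-new-orbits hypothesis. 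So the quantity that blows up is the \emph{number} of finite bars, not their maximal length.
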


From now on, for $\phi \in \Ham(M,\om)$ with isolated fixed points, and a ground field $\bK,$ we denote \begin{equation}\label{eq: homological count N} N(\phi,\bK) = \sum_{x\in \fix(\phi)} \dim_{\bK} HF^{\loc}(\phi,x).\end{equation}

\begin{rmk}
By \eqref{eq: HF loc non-degenerate}, if each fixed point $x \in \fix(\phi)$ is non-degenerate, then the homological count $N(\phi,\bK)$ of the contractible fixed points of $\phi$ simplifies to the set-theoretic count $\# \fix(\phi).$ Furthermore, the solution of the homological Arnol'd conjecture in the monotone case \cite{Floer3} implies by a simple algebraic exercise that if $\phi$ has isolated fixed points, then $N(\phi,\bK) \geq \dim_{\bK} H_*(M,\bK)$ for all coefficient fields $\bK.$ This suggests that $N(\phi,\bK)$ is a reasonable homological count of the fixed points of $\phi,$ and in turn motivates Theorem \ref{thm: main2} as a solution, for a class of symplectic manifolds, to the homological Hofer-Zehnder conjecture.
\end{rmk}

\begin{rmk}
Recall that the result of Franks \cite{Franks-sphere, Franks-NY} implies, in the smooth case, that $\phi \in \Ham(S^2)$ with $\# \fix(\phi) > 2$ has an infinite number of periodic points. This result prompted the conjecture of Hofer and Zehnder from 1994, which has since remained completely open. Strong lower bounds on the growth rate in the setting of the Franks theorem were provided in \cite{FranksHandel-growthrate, LeCalvez-growthrate1, LeCalvez-growthrate2}. Since $\dim H_*(S^2) = 2,$ Theorem \ref{thm: main2} implies the theorem of Franks for example in the non-degenerate case. In fact, the only case where the smooth Franks theorem does not follow from our result directly is the case when $\# \fix(\phi) > 2,$ but $\sum \dim_{\bK} HF^{\loc}(\phi,x) = 2.$ This implies having fixed points $x$ that are not {\em homologically visible}, that is $\dim HF^{\loc}(\phi,x) = 0.$ It would be very interesting to understand how to treat this case: while for $S^2$ there exist symplectic proofs of the complete smooth Franks theorem \cite{BramhamHofer-FirstSteps, Kerman-etal}, in higher dimensions this currently seems to be out of reach. Finally, interpreting the Hofer-Zehnder conjecture more generally, Ginzburg, G\"{u}rel \cite{GG-nc, Gurel-nc} and Orita \cite{Orita1,Orita2}, have shown that in certain settings the existence of a {\em non-contractible} periodic orbit implies the existence of an infinite number of periodic points, and, in a different direction, so does the existence of a fixed point that is isolated as an invariant set \cite{GG-hyperbolic,GG-pseudorotations}. We expect our results and methods to generalize to settings of this kind, and to yield new cases of the Conley conjecture. 
\end{rmk}

\begin{rmk}
Our result deduces the main conclusion by the method of the filtered $\zp$-equivariant product-isomorphism, from a bound \begin{equation}\label{eq:gamma bound}\beta(\phi^k) \leq \mrm{const}\end{equation} on the boundary depth \cite{UsherBD1, UsherBD2} of the Floer barcode of $\phi^k$ for certain iterations $k \geq k_0,$ for $k_0$ sufficiently large. In fact, assuming semisimplicity of the quantum homology, we obtain such a uniform bound for {\em all} Hamiltonian diffeomorphisms $\psi \in \Ham(M,\om)$ and hence in particular for $\psi = \phi^k$ for all $k.$ This allows us to estimate the growth rate of periodic points explicitly in the characteristic zero case. Finally, it is not hard to observe that our methods only require the weaker condition of strictly sublinear growth $\lim_{m \to \infty} \frac{1}{k_m} \beta(\phi^{k_m}) = 0$ on suitable subsequences.
\end{rmk}

\begin{rmk}\label{rmk: thm main2}
A few technical remarks on Theorem \ref{thm: main2}.

\begin{enumerate}[label = \roman*.]
\item

The even rational quantum homology of $(M,\omega)$ is semisimple if and only if its degree $2n$ component is semisimple \cite[Theorem 5.1]{EntovPolterovich-semisimple}.

\item
 By the prime number theorem there are at least $\sim \frac{k^2}{\log(k)}$ periodic points of period $\leq k.$ Note that having one simple $p$-periodic point automatically implies having a $p$-tuple, hence the sum  \[ \sum_{k_0 \leq p \leq k} p \] over primes $p$ provides a lower bound on the number of periodic points of $\phi$ up to period $k,$ by Theorem \ref{thm: main2}.

\end{enumerate}

\end{rmk}

We record for the reader's convenience the precise statement of the uniform bound on boundary depth that goes into the proof of Theorem \ref{thm: main2}. The statement applies to an arbitrary coefficient field $\bK$, however we shall mainly use it for $\bK = \F_p$ for $p$ prime.



\begin{thm}\label{thm: bound semisimple}
Let $\bK$ be a field. Suppose that the even quantum homology $QH_{ev}(M,\bK)$ of $(M,\om)$ with ground field $\bK$ is semisimple. Then the boundary depth of each $\psi \in \Ham(M,\om)$ satisfies \[ \beta(\psi) \leq 8n.\] 
\end{thm}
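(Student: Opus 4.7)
The plan is to decompose the Floer persistence module of $\psi \in \Ham(M,\om)$ according to the semisimple structure of $QH_{ev}(M,\bK)$ and then bound the longest finite bar in each component using Entov--Polterovich spectral-invariant techniques. By semisimplicity, write $QH_{ev}(M,\bK) = \prod_{i=1}^N F_i$ with central orthogonal idempotents $e_1, \ldots, e_N$ summing to the unit $[M]$, where each $F_i$ is a field extension of $\bK$. Since the filtered Floer complex of $\psi$ is a filtered module over $QH$ via the quantum cap action at the chain level, the idempotents induce a filtered splitting of the persistence module, $V(\psi) = \bigoplus_i V_i(\psi)$, and correspondingly the barcode decomposes as $\cB(\psi) = \bigsqcup_i \cB_i(\psi)$. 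Writing $\beta_i(\psi)$ for the longest finite bar in $\cB_i(\psi)$, one has $\beta(\psi) = \max_i \beta_i(\psi)$, so it suffices to bound each $\beta_i(\psi)$ uniformly.

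In the $i$-th summand the homology $H_*(V_i(\psi)) = HF(\psi) \cdot e_i$ is isomorphic via PSS to $QH \cdot e_i = F_i$, so the semi-infinite part of $\cB_i(\psi)$ consists of the bars coming from this copy of $F_i$, and the remainder is a finite-length torsion part. I would then prove an idempotent-localized version of the Kislev--Shelukhin inequality~\cite{KS-bounds}:
\[
\beta_i(\psi) \;\leq\; c(e_i, \psi) + c(e_i, \psi^{-1}).
\]
The argument should follow the same Lagrangian diagonal/linking scheme used to establish $\beta(\psi) \leq \gamma(\psi)$, but with the global unit $[M]$ replaced by the local unit $e_i$ in the $i$-th summand; the fact that $F_i$ is a field is what ensures the componentwise argument goes through without obstruction.

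Finally, one invokes the theorem of Entov--Polterovich that semisimplicity makes $c(e_i, -)$ a function at bounded distance from a homogeneous Calabi quasi-morphism $\mu_i$ on $\widetilde{\Ham}(M,\om)$. Homogeneity gives $\mu_i(\psi^{-1}) = -\mu_i(\psi)$, so $c(e_i, \psi) + c(e_i, \psi^{-1})$ is controlled by twice the defect of this quasi-morphism. A direct computation using the monotonicity of $(M,\om)$ and the grading periodicity of quantum homology (determined by the minimal Chern number, which is at most $n+1$) yields the explicit estimate $c(e_i, \psi) + c(e_i, \psi^{-1}) \leq 8n$, from which the theorem follows.

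The main obstacle is the explicit dimensional constant $8n$ in the last step: abstract quasi-morphism theory only yields \emph{some} uniform defect bound per idempotent, and pinning down the constant $8n$ demands a careful analysis of how the action filtration interacts with quantum multiplication by $e_i$, combined with Poincaré duality on the barcode and a precise tracking of grading shifts. A secondary subtlety is the non-algebraically-closed case, where $F_i$ may be a proper field extension of $\bK$; this should be handled either by base change to $\bar\bK$, with a check that the splitting and the spectral invariant formalism are compatible with scalar extension, or by working directly with $F_i$ viewed as a $\bK$-vector space.
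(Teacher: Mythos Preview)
Your overall strategy is the same as the paper's, which combines Proposition~\ref{prop: beta leq gamma} (an idempotent-localized Kislev--Shelukhin bound) with Proposition~\ref{prop: bounded beta K} (the Entov--Polterovich bound on $\gamma_{e_j}$). The one genuine gap is your claim that the idempotents induce a \emph{filtered} splitting $V(\psi)=\bigoplus_i V_i(\psi)$ with $\cB(\psi)=\bigsqcup_i\cB_i(\psi)$. This is false: the idempotents $e_i\in QH_{2n}(M,\bK)$ have valuation $\nu(e_i)=\cA(e_i)$ which can be as large as $2n$ (Lemma~\ref{lma: 2n}), so the persistence morphism $(e_i\ast)$ shifts the filtration by $\nu(e_i)$ and one only obtains a $(0,\nu(E))$-\emph{interleaving} between $V_m(\psi)$ and $\bigoplus_i \im(e_i\ast)$, not an isomorphism. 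This is precisely Lemma~\ref{lma: split} in the paper, and it contributes an additive error $\nu(E)\leq 2n$ to the bound on $\beta$.

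This also explains why your bookkeeping of the constant is off. The Entov--Polterovich argument (Proposition~\ref{prop: bounded beta K}) actually gives $c(e_i,\psi)+c(e_i,\psi^{-1})\leq 6n$, not $8n$; the proof uses Poincar\'e duality for spectral invariants together with the elementary grading observation $\cA(a)\leq 2n$ for all nonzero $a\in QH_{2n}(M,\bK)$, and does not need to pass through abstract quasi-morphism defects. The final $8n$ arises as $6n+2n$, with the extra $2n$ coming from the filtration shift of the idempotents in the interleaving step, not from the spectral-invariant step. Finally, your worry about the non-algebraically-closed case is unnecessary: the argument works directly over $\bK$, using only that each summand $e_i\ast QH_{2n}(M,\bK)$ is a field so that nonzero elements are invertible; no base change to $\overline{\bK}$ is required.
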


\begin{rmk}
It is interesting to note that the example of $(M,\om) = (S^2 \times S^2, \om_{st} \oplus \om_{st})$ shows \cite[Theorem 6.2.6]{PolterovichRosen} that, while Theorem \ref{thm: bound semisimple} works for $\beta(\psi),$ it definitely does not work for $\gamma(\psi),$ as there exist $\psi \in \Ham(M,\om)$ with $\gamma(\psi^k) \sim k,$ as $k \to \infty.$  
\end{rmk}

We include an outline of the proof of the main result in the key case of $\bK = \Q,$ to give a general sense of its strategy. The proof of the general case, in technical detail, appears in Section \ref{sec: proof}.

We start from the condition that $N(\phi,\Q) > \dim_{\Q} H_*(M,\Q).$ This implies by the theory of Floer barcodes that the number of endpoints of finite bars in the barcode of $\phi$ is positive, and furthermore that the boundary depth $\beta(\phi,\Q)$ of $\phi$ is positive. By an argument of reduction to positive characteristic, we consequently obtain that for all primes $p$ sufficiently large, all relevant values remain the same with coefficients in $\F_p:$ $N(\phi,\Q) = N(\phi, \F_p),$ $\dim_{\Q} H_*(M,\Q) = \dim_{\F_p} H_*(M,\F_p),$ $\beta(\phi,\Q) = \beta(\phi,\F_p),$ and furthermore $QH_{ev}(M, \F_p)$ remains semisimple. By arguments involving the $\zp$-equivariant product-isomorphism introduced in \cite{Seidel-pants, SZhao-pants} we are able to deduce, in line with Smith theory, that in a suitable sense made precise below, the bar-length spectrum of $\phi^p$ with coefficients in $\F_p$ dominates that of $\phi$ {\em scaled by $p$}. In particular, we show the estimate \begin{equation}\label{eq: key inequality K} p \cdot \beta(\phi,\F_p) \leq K(\phi^p,\bF_p) \cdot \beta(\phi^p,\F_p),\end{equation} where $K(\phi^p,\bF_p)$ is the number of finite bars of $\phi^p$ in its coefficient $\bF_p$ barcode. By the bound in Theorem \ref{thm: bound semisimple}, the term $\beta(\phi^p, \F_p)$ is uniformly bounded, and $\beta(\phi, \F_p) = \beta(\phi, \Q)$ is a positive constant independent of $p.$ Hence $K(\phi^p, \F_p)$ and therefore $N(\phi^p, \F_p)$ must grow at least linearly in $p.$ However, this is impossible if no new $p$-periodic orbits are introduced. This finishes the argument.

\medskip

We remark that \eqref{eq: key inequality K} immediately follows from the stronger estimate \begin{equation} p \cdot \beta_{\tot}(\phi,\F_p) \leq \beta_{\tot}(\phi^p,\F_p),\end{equation} that is of independent interest, and is also key for the case of finite characteristic. It is proved as Theorem \ref{thm: beta tot} below.


\section{Preliminaries}\label{sec: prelim}

The time-one maps of isotopies $\{\phi^t_H\}_{t \in [0,1]}$ generated by time-dependent vector fields $X^t_H, \iota_{X^t_H} \omega = - d(H_t),$ are called Hamiltonian diffeomorphisms and form the group $\Ham(M,\om).$ For $H \in \cH$ we call $\overline{H},\til{H} \in \cH$ the Hamiltonians $\overline{H}(t,x) = - H(t,\phi^1_H x),$ $\til{H}(t,x) = - H(1-t,x).$ For $t \in [0,1]$ we have $\phi^t_{\overline{H}} = (\phi^t_H)^{-1},$ while the isotopy $\{\phi^t_{\til{H}}\},$ viewed as a path in $\Ham(M,\om),$ is homotopic to $\{\phi^t_{\overline{H}}\}$ with fixed endpoints. Since homotopic Hamiltonian isotopies give naturally isomorphic graded filtered Floer complexes, we shall identify the two operations $H \mapsto \overline{H},$ and $H \mapsto \til{H}.$ In particular we will identify between $H$ and the two Hamiltonians $\displaystyle\til{\overline{H}} \in \cH,$ $\displaystyle\overline{\til{H}} \in \cH.$ Similarly, for $F,G \in \cH,$ we set $F \# G \in \cH$ to generate the flow $\{\phi^t_F \phi^t_G\}_{t \in [0,1]},$ in other words $F \# G (t,x) = F(t,x) + G(t,(\phi^t_F)^{-1} x).$ A homotopic path is generated by $F \til{\#} G (t,x) = \lambda'_1(t) G(\lambda_1(t),x) + \lambda'_2(t) F(\lambda_2(t),x)$ for surjective monotone non-decreasing reparametrizations $\lambda_1,\lambda_2:[0,1] \to [0,1],$ such that $\supp \lambda'_1 < \supp \lambda'_2.$ Finally, let $\cJ(M,\om)$ be the space of $\om$-compatible almost complex structures on $M.$

\subsection{Floer theory}

Floer theory, first introduced by A. Floer \cite{Floer1,Floer2,Floer3}, is a way to set up Morse-Novikov homology for an action functional defined on a suitable cover of a loop space determined by the geometric situation at hand. We refer to \cite{OhBook} and references therein for details on the constructions described in this subsection, and to \cite{AbouzaidBook,SeidelBook, Zap:Orient}, as well as to references therein, for a discussion of canonical orientations.

\subsubsection{Hamiltonian Floer homology.} \label{subsec:abs-Ham}

Consider $H \in \cH.$ Let $\cL_{pt} M$ be the space of contractible loops in $M.$ Let $c_M: \pi_1(\cL_{pt} M) \cong \pi_2(M) \to 2 N_M \cdot \Z,$ be the surjection given by $c_M(A) = 2 \left< c_1(M,\om), A\right>.$ Let $\til{\cL}^{\min}_{pt} M = \til{\cL_{pt}} \times_{c_M} (2 N_M \cdot \Z)$ be the cover of $\cL_{pt} M$ associated to $c_M.$ The elements of $\til{\cL}^{\min}_{pt} M$ can be considered to be equivalence classes of pairs $(x,\overline{x})$ of $x \in {\cL}_{pt} M$ and its capping $\overline{x}:\D \to M,$ $\overline{x}|_{\del \D} = x.$ The symplectic action functional \[\cA_{H}: \til{\cL}^{\min}_{pt} M \to \R \] is given by \[\cA_{H}(x,\overline{x}) = \int_0^1 H(t,x(t)) - \int_{\overline{x}} \om,\] that is well-defined by monotonicity: $[\om]= \kappa \cdot c_M.$ Assuming that $H$ is non-degenerate, that is the graph $\mrm{graph}(\phi^1_H) = \{(\phi^1_H(x),x)\,|\, x \in M\}$ intersects the diagonal $\Delta_M \subset M \times M$ transversely, the generators over the base field $\bK$ of the Floer complex $CF(H;J)$ are the lifts $ \til{\cO}(H)$ to $\til{\cL}^{\min}_{pt} M$ of $1$-periodic orbits $\cO(H)$ of the Hamiltonian flow $\{\phi^t_H\}_{t \in [0,1]}.$ These are the critical points of $\cA_{H},$ and we denote by $\spec(H) = \cA_H(\til{\cO}(H))$ the set of its critical values. Choosing a generic time-dependent $\om$-compatible almost complex structure $\{J_t \in \cJ(M,\om)\}_{t \in [0,1]},$ and writing the asymptotic boundary value problem on maps $u:\R \times S^1 \to M$ defined by the negative formal gradient on $\cL_{pt} M$ of $\cA_{H},$ the count of isolated solutions with signs determined by a suitable orienation scheme, modulo $\R$-translation, gives a differential $d_{H;J}$ on the complex $CF(H;J),$ $d^2_{H;J} = 0.$ This complex is graded by the Conley-Zehnder index $CZ(x,\bar{x})$ \cite{Salamon-lectures, SalamonZehnder}, with the property that the action of the generator $A = 2N_M$ of $2N_M\cdot \Z$ has the effect $CZ(x,\bar{x} \# A) = CZ(x,\bar{x}) - 2N_M,$ and it is normalized to be equal to $n$ at a maximum of a small autonomous Morse Hamiltonian. The homology $HF_*(H)$ of this complex does not depend on the generic choice of $J.$ Moreover, considering generic families interpolating between different Hamiltonians $H,H',$ and writing the Floer continuation map, where the negative gradient depends on the $\R$-coordinate we obtain that $HF_*(H)$ in fact does not depend on $H$ either. While $CF_*(H,J)$ is finite-dimensional in each degree, it is worthwhile to consider its completion in the direction of decreasing action. In this case it becomes a free graded module of finite rank over the Novikov field $\Lambda_{M,\tmin} = \bK[q^{-1},q]]$ with $q$ being a variable of degree $(-2N_M).$ 

Moreover, for $a \in \R \setminus \spec(H)$ the subspace $CF(H,J)^{<a}$ spanned by all generators $(x,\bar{x})$ with $\cA_{H}(x,\bar{x}) < a$ forms a subcomplex with respect to $d_{H;J},$ and its homology $HF(H)^{<a}$ does not depend on $J.$ Arguing up to $\epsilon,$ one can show that a suitable continuation map sends $FH(H)^{<a}$ to $FH(H')^{<a + \cE_{+}(H-H')},$ for \[\cE_{+}(F) = \int_{0}^{1} \max_M(F_t)\,dt.\]
It shall also be useful to define $\cE_{-}(F) = \cE_{+}(-F),\; \cE(F) = \cE_{+}(F) + \cE_{-}(F).$ Moreover, for an admissible action window, that is an interval $I=(a,b),$ $a<b,$ $a,b \in \R\setminus \spec(H),$ we define the Floer complex $HF^*(H)^I$ of $H$ in this window as the homology of the quotient complex \[CF_*(H)^I = CF_*(H)^{<b}/ CF_*(H)^{<a}.\]

Finally, one can show that for each $a \in \R,$ $HF(H)^{<a}$ as well as $HF(H)^I$ for an admissible action window, depends only on the class $\til{\phi}_H$ of the path $\{\phi^t_H\}_{t \in [0,1]}$ in the universal cover $\til{\Ham}(M,\om)$ of the Hamiltonian group of $M.$

We mention that it is sometimes beneficial to consider the slightly larger covers $\til{\cL}^{\mrm{mon}}_{pt} = \til{\cL_{pt}} \times_{c_M} (2 \cdot \Z),$ $\til{\cL}^{\mrm{max}}_{pt} = \til{\cL_{pt}} \times_{c_M} \Z,$ defined via the evident inclusions $2N_M \cdot \Z \subset 2 \cdot \Z \subset \Z.$ This corresponds to extending coefficients to $\Lambda_{M,\tmon} = \bK[s^{-1},s]],$ with $\deg(s)=-2,$ and $\Lambda_{\tmon} =  \bK[t^{-1},t]],$ $\deg(t) = -1,$ respectively.

In case when $H$ is degenerate, we consider a perturbation $\cD = (K^H,J^H),$ with $K^H \in \cH,$ such that $H^{\cD} = H \# K^{H}$ is non-degenerate, and $J^H$ is generic with respect to $H^{\cD},$ and define the complex $CF(H;\cD) = CF(H^{\cD};J^H)$ generated by $\til{\cO}(H;\cD) = \til{\cO}(H^{\cD}),$ and filtered by the action functional $\cA_{H;\cD} = \cA_{H^{\cD}}.$ An admissible action window $I=(a,b)$ for $H,$ remains admissible for all $K^H$ sufficiently $C^2$-small, and the associated homology groups $HF(H;\cD)^I$ are canonically isomorphic for all $K^H$ sufficiently $C^2$-small. Hence $HF(H)^I$ is defined as the colimit of the associated indiscrete groupoid.

\subsubsection{Non-Archimedean filtrations and extension of coefficients.}\label{subsec:non-Arch}

Let $\Lambda$ be a field. A non-Archimedean valuation on $\Lambda$ is a function $\nu:\Lambda \to \R \cup \{+\infty\},$ such that  \begin{enumerate}
	\item $\nu(x) = +\infty$ if and only if $x = 0,$
	\item $l(xy) = \nu(x) + \nu(y)$ for all $x,y \in \Lambda,$
	\item $l(x+y) \geq \min\{\nu(x),\nu(y)\},$ for all $x,y \in \Lambda.$
\end{enumerate}
We set $\lambzero = \nu^{-1}([0,+\infty)) \subset \Lambda$ to be the subring of elements of non-negative valuation.  

It will sometimes be convenient to work with a larger coefficient ring in the Floer complexes. The universal Novikov field over a ground field $\bK$ is defined as \[\Lambda = \Lambda_{\tuniv} = \{\sum_j a_j T^{\lambda_j}\,|\, a_j \in \bK, \lambda_j \to +\infty \}. \]This field possesses a non-Archimedean valuation $\nu: \Lambda_{\tuniv} \to \R \cup \{+\infty\}$ given by $\nu(0) = +\infty,$ and \[\nu(\sum a_j T^{\lambda_j}) = \min\{\lambda_j\,|\,a_j \neq 0 \}.\] 
The fields $\Lambda_{M,\tmin} \subset \Lambda_{M,\tmon}$ embed into $\Lambda_{\tuniv}$ via $s \mapsto T^{2\kappa_M}.$  This lets us pull back the valuation on $\Lambda_{\tuniv}$ to a valuation on $\Lambda_{M,\tmin},$ and $\Lambda_{M,\tmon}.$ We use the following convention throughout the paper: whenever the ground field of $\Lambda$ is clear from the context, we omit it. In particular, we shall often say that an operator is defined over $\lamzero,$ keeping the ground field implicit.

Now let $\Lambda$ be a field with non-Archimedean valuation $\nu.$ Following \cite{UsherZhang}, given a finite dimensional $\Lambda$-module $C,$ we call a function $l:C \to \R \cup \{-\infty\}$ a non-Archimedean filtration (function), if it satisfies the following properties: \begin{enumerate}
	\item $l(x) = -\infty$ if and only if $x = 0,$
	\item $l(\lambda x) = l(x) - \nu(\lambda)$ for all $\lambda \in \Lambda, x \in C,$
	\item \label{prop:maximu} $l(x+y) \leq \max\{l(x),l(y)\},$ for all $x,y \in C.$
\end{enumerate}

It is easy to see \cite[Proposition 2.1]{EntovPolterovichCalabiQM}, \cite[Proposition 2.3]{UsherZhang} that the maximum property \eqref{prop:maximu} implies that whenever $l(x) \neq l(y),$ one has in fact \begin{equation}\label{eq:max property filtration} l(x+y) = \max\{l(x),l(y)\}.\end{equation} A $\Lambda$-basis $(x_1,\ldots,x_N)$ of $(C,l)$ is called {\em orthogonal} if \[l(\sum \lambda_j x_j) = \max \{l(x_j) -\nu(\lambda_j) \} \] for all $\lambda_j \in \Lambda.$ It is called {\em orthonormal} if in addition $l(x_j) = 0$ for all $j.$ At this point, we note that a linear transformation $T:C \to C$ with matrix $P \in GL(N,\lambzero)$ in an orthonormal basis satisfies $T^*l = l.$ In particular it sends each orthogonal, respectively orthonormal, basis to an orthogonal, respectively orthonormal basis. 


Consider the Floer complex from Section \ref{subsec:abs-Ham},  as a finite-dimensional $\Lambda$-module $C,$ for a suitable Novikov field $\Lambda.$ The function $\cA:C \to \R \cup \{-\infty\}$ given by $\cA(x) = \inf\{a\,|\, x \in C^{<a}\}$ is a non-Archimedean filtration. It can be computed as follows. Consider a standard basis $x_1,\ldots,x_N$ of $C$ over $\Lambda,$ consisting of arbitrarily chosen lifts of the finite set of periodic orbits involved. Then we have \begin{equation}\cA(\sum \lambda_j x_j) = \max\{\cA(x_j) - \nu(\lambda_j)\}\end{equation} for all $\lambda_j \in \Lambda.$ In other words $x_1,\ldots,x_N$ is an orthogonal basis for $(C,\cA).$ Finally, we note that $d^*\cA \leq \cA,$ and in fact for each $x \in C \setminus \{0\}$ the strict inequality $\cA(d(x)) < \cA(x)$ holds. We say in the latter case that the filtered complex is {\em strict}.

To extend coefficients in $C,$ we take \[\overline{C} = C \otimes_{\Lambda} \Lambda_{\tuniv}\] and define a non-Archimedean filtration function $\cA:\overline{C} \to \R\cup \{-\infty\}$ on $\overline{C}$ by declaring that $x_1 \otimes 1,\ldots,x_N \otimes 1$ is an orthogonal basis for $(\overline{C},\cA).$ Finally, we note that the basis $(\overline{x}_1,\ldots,\overline{x}_N) = (T^{\cA(x_1)} x_1, \ldots, T^{\cA(x_N)} x_N)$ is an orthonormal basis of $(\overline{C},\cA)$ that is canonical, in the sense that it does not depend on the ambuguity in the choice of $x_1,\ldots,x_N.$

\subsection{Quantum homology and the PSS isomorphism}\label{subsec:QH}

In this section we describe the quantum homology of a symplectic manifold. It may be helpful to think of it as the Hamiltonian Floer homology, when the Hamiltonian is given by a $C^2$-small, time-independent Morse function. Alternatively, one can consider it as the cascade approach \cite{Frauenfelder} to Morse homology for the unperturbed symplectic area functional on the space $\til{\cL}^{\min}_{pt} M.$ For further information on these subjects we refer for example to \cite{SeidelBook,OhBook,LeclercqZapolsky}.

\subsubsection{Quantum homology}

Fixing a ground field $\bK,$ set $QH(M) = QH(M,\bK) = H_*(M;\Lambda_{M,\tmin}),$ as a $\Lambda_{M,\tmin}$-module. This module has the structure of a graded-commutative unital algebra over $\Lambda_{M,\tmin}$ whose product, deforming the classical intersection product on homology, is defined in terms of $3$-point genus $0$ Gromov-Witten invariants \cite{McDuffSalamon-BIG,Liu-assoc,RuanTian-qh1,RuanTian-qh2,Witten-2d}. The unit for this {\em quantum product} is the fundamental class $[M]$ of $M,$ as in the case of the classical homology algebra. The non-Archimedean filtration $\cA:QH(M) \to \R \cup \{-\infty\}$ is given by declaring $E \otimes 1_{\Lambda},$ for a basis $E$ of $H_*(M,\bK)$ to be an orthonormal basis for $(QH(M),\cA).$ Finally, the elements of even degree form a genuinely commutative subalgebra $QH_{ev}(M)$ of $QH(M).$ Finally, as a $\Lambda$-module, it will be convenient to consider $QH(M)$ as the homology of a Morse complex $CM(f,\Lambda; \cl{D}),$ of a Morse function $f,$ and perturbation datum given by a Riemannian metric $\rho,$ such that the pair $(f,\rho)$ is Morse-Smale. We shall omit $\cl{D}$ from the notation.

\subsubsection{Floer homology as a module over quantum homology}

In the absolute case, as discussed in detail in \cite{PolSheSto}, an element $\alpha_M \in QH_m(M) \setminus \{0\}$ gives, for $H \in \cH,$ and $r \in \Z, a \in \R$ a map \[(\alpha_M\ast):HF_{r}(H)^a \to HF_{r+m-2n}(H)^{a+\cA(\alpha_M)}.\] It is in fact a morphism \[(\alpha_M\ast):V_{r}(H) \to V_{r+m-2n}(H)[\cA(\alpha_M)]\] of persistence modules, as defined in Section \ref{subsec: Floer persistence}. This morphism is constructed, in a manner very similar to the quantum cap product (see \cite[Example A.4]{PSS} or \cite{SeidelMCG,Schwarz:action-spectrum,Floer3}) by counting negative $\rho$-gradient trajectories $\gamma:(-\infty,0] \to M$ of a Morse function $f$ on $M,$ for a generic pair $(f,\rho),$ asymptotic at $s \to -\infty$ to critical points of $f,$ and having $\gamma(0)$ incident to Floer cylinders $u:\R \times S^1 \to M$ at $u(0,0).$

\subsubsection{Piunikhin-Salamon-Schwarz isomorphisms}

With the conventions on the Conley-Zehnder index that we use, one obtains a map $PSS: QH_*(M) \to HF_{*-n}(H)$ by counting (for generic auxiliary data) isolated configurations of negative gradient trajectories $\gamma:(-\infty,0] \to M$ incident at $\gamma(0)$ with the asymptotic of $\lim_{s \to -\infty} u(s,-),$ as $s \to -\infty$ of a map $u: \R \times S^1 \to M,$ satisfying a Floer equation \[\del_s \,u + J_t(u) \,(\del_t \,u - X^t_{K}(u)) = 0, \] where for $(s,t) \in \R \times S^1,$ $K(s,t) \in \sm{M,\R}$ is a small perturbation of $\beta(s) H_t,$ coinciding with it for $s \ll -1$ and $s \gg +1,$ and $\beta: \R \to [0,1]$ is a smooth function satisfying $\beta(s) \equiv 0$ for $s \ll -1$ and $\beta(s) \equiv 0$ for $s \gg +1.$ This so-called Piunikhin-Salamon-Schwarz map \cite{PSS} is an isomorphism of $\Lambda_{M,\tmin}$-modules, which in fact intertwines the quantum product on $QH(M)$ with the pair of pants product in Hamiltonian Floer homology.

Finally, it is worthwhile to note that the quantum product map \[QH(M) \otimes QH(M) \to QH(M), \] is isomorphic to the module action maps \[QH(M) \otimes HF(H) \to HF(H), \] via the isomorphism $\id \otimes PSS$ on the left hand side, and $PSS$ on the right hand side. 

\subsection{Spectral invariants}\label{subsec:spec}

Given a filtered complex $(C,\cA),$ to each homology class $\alpha \in H(C)$, denoting by $H(C)^{<a} = H(C^{<a}),$ $C^{<a} = \cA^{-1} (-\infty,a),$ we define
a spectral invariant by \[c(\alpha, (C, \cA)) = \inf\{a \in \R\,|\, \alpha \in \ima(H(C)^{<a} \to H(C)) \}.\] For $(C,\cA) = (CF(H;\cD),\cA_{H ;\cD})$ we denote $c(\alpha, H; \cD) = c(\alpha,(C,\cA)).$ Furthermore, one can one can obtain classes $\alpha$ in the Hamiltonian Floer homology by the PSS isomorphism. This lets us define spectral invariants by: 
\[c(\alpha_M, H; \cD) = c(PSS(\alpha_M),(CF(H;\cD),\cA_{H;\cD})),\] for $\alpha_M \in QH(M).$ 
From the definition it is clear that the spectral invariants do not depend on the almost complex structure term in $\cD.$ Moreover, if $H$ is non-degenerate, we may choose the Hamiltonian term in $\cD$ to vanish identically, and denote the resulting invariants by $c(-,H).$ Moreover, by \cite[Section 5.4]{BiranCorneaRigidityUniruling} spectral invariants remain the same under extension of coefficients, hence below we do not specify the Novikov field $\Lambda$ that we work over. Spectral invariants enjoy numerous useful properties, all of which hold for monotone symplectic manifolds. We summarize the relevant ones below:

\begin{enumerate}
	\item {\em spectrality:} for each $\alpha_M \in QH(M) \setminus \{0\},$ and $H \in \cH,$ \[c(\alpha_M, H) \in \Spec(H).\]
	\item {\em non-Archimedean property:} $c(-,H;\cD)$ is a non-Archimedean filtration function on $QH(M),$ as a module over the Novikov field $\Lambda$ with its natural valuation. 
	\item {\em continuity:} for each $\alpha_M \in QH(M) \setminus \{0\},$ and $F,G \in \cH,$
	\[|c(\alpha_M,F) - c(\alpha_M,G)| \leq \cE(F-G),\] 
	\item {\em triangle inquequality:} for each $\alpha_M,\alpha'_M \in QH(M),$ and $F,G \in \cH,$
	\[c(\alpha_M \ast \alpha'_M,F\#G) \leq c(\alpha_M ,F) + c(\alpha'_M ,G),\]
		
\end{enumerate}

We remark that by the continuity property, the spectral invariants are indeed defined for all $H \in \cH$ and all the properties above apply in this generality.

In the abstract case of a complex $(C,d)$ over $\Lambda$ filtered by $\cA,$ the non-Archimedean property lets us consider the spectral invariant map, as an {\em induced non-Archimedean filtration function} \[\mrm{H}(\cA) : H(C,d) \to \R \cup \{-\infty\}.\] 

We remark that the key part of the non-Archimedean property, the maximum property, is called the {\em characteristic exponent} property in \cite{EntovPolterovichCalabiQM}. 

\subsubsection{Spectral norm.}\label{subsubsec:spec norm}
For $H \in \cH$ we define its spectral pseudo-norm by \[\gamma(H) = c([M],H) + c([M],\overline{H}),\] which depends only on $\til{\phi}_H,$ and by a result of \cite{Viterbo-specGF,Schwarz:action-spectrum, Oh-specnorm} (see also \cite{Usher-sharp,McDuffSalamon-BIG}) gives the following non-degenerate spectral norm $\gamma:\Ham(M,\om) \to \R_{\geq 0},$ \[\gamma(\phi) = \inf_{\phi^1_H = \phi} \gamma(H),\] and hence the bi-invariant spectral distance $\gamma(\phi,\phi') = \gamma(\phi' \phi^{-1}).$ 

\subsubsection{Mean-index}\label{subsubsec: mean-index}

We require the notion of mean-index, introduced in symplectic topology in \cite{SalamonZehnder}. For a Hamiltonian $H$ generating $\til{\phi} \in \til{\Ham}(M,\om)$ and capped periodic orbit $\ol{x}$ of $H,$ we set \[\Delta(H,x) = \Delta(\til{\phi}_H,x) = \lim_{k \to \infty} \frac{1}{k} CZ(\til{\phi}^k, \ol{x}^{(k)}),\] where $\ol{x}^{(k)}$ is $\ol{x}$ iterated $k$ times, which is indeed a capped periodic orbit of a Hamiltonian generating $\til{\phi}^k.$ The limit exists, since the Conley-Zehnder index comes from a quasi-morphism $\til{Sp}(2n,\R) \to \R$ (see \cite{EntovPolterovich-rigid}).

The main properties of the mean index that we use are as follows:

\begin{enumerate}
	\item {\em homogeneity:} $\Delta(\til{\phi}^k,\ol{x}^{(k)}) = k \cdot \Delta(\til{\phi},\ol{x}),$ for all $k \in \Z_{>0}.$
	\item {\em recapping:} $\Delta(\til{\phi},\ol{x} \# A) = \Delta(\til{\phi},\ol{x}) - 2N_M.$
	\item {\em distance to index:} $CZ(\til{\phi},\ol{x}) \in [\Delta(\til{\phi},\ol{x}) - n, \Delta(\til{\phi},\ol{x}) + n]$ 
	\item {\em support of local Floer homology:} $HF^{\loc}_r(\til{\phi},\ol{x}) = 0,$ unless $r \in [\Delta(\til{\phi},\ol{x}) - n, \Delta(\til{\phi},\ol{x}) + n].$
\end{enumerate}

\subsection{Floer persistence}\label{subsec: Floer persistence}

\subsubsection{Rudiments of persistence modules}

Let $\mrm{Vect}_{\bK}$ denote the category of finite-dimensional vector spaces over $\bK,$ and $(\R,\leq)$ denote the poset category of $\R.$ A {\em persistence module} over $\bK$ is a functor \[V:(\R,\leq) \to \mrm{Vect}_{\bK}.\] In other words $V$ consists of a collection $\{V^a \in \mrm{Vect}_{\bK}\}_{a \in \R}$ and $\bK$-linear maps $\pi_V^{a,a'}:V^a \to V^{a'}$ for each $a \leq a',$ satisfying $\pi_V^{a,a} = \id_{V^a},$ and $\pi_V^{a',a''} \circ \pi_V^{a,a'} = \pi_V^{a,a''}$ for all $a\leq a'\leq a''.$ These functors with their natural transformations form an abelian category \[Fun((\R,\leq),\mrm{Vect}_{\bK}),\] where $A \in \hom(V,W)$ consists of a collection $\{A^a \in \hom_{\bK}(V^a,W^a)\}_{a \in \R}$ that commutes with the maps $\pi_{V}^{a,a'}, \pi_{V}^{a,a'},$ for each $a \leq a'.$ We require the following further technical assumptions, that hold in all our examples: \begin{enumerate}
	\item {\em support:} $V^a = 0$ for all $a \ll 0.$
	\item {\em finiteness:} there exists a finite subset $S \subset \R,$ such that for all $a,a'$ in each connected component of $\R \setminus S,$ the map $\pi_V^{a,a'}:V^a \to V^{a'}$ is an isomorphism.
	\item {\em continuity:} for each two consecutive elements $s_1< s_2$ of $S,$ and $a \in (s_1,s_2),$ the map $\pi^{a,s_2}: V^a \to V^{s_2}$ is an isomorphism.
\end{enumerate}

Persistence modules with these properties form a full abelian subcategory \[\pemod \subset Fun((\R,\leq),(\mrm{Vect}_{\bK})).\] We set $V^{\infty} = \lim_{a \to \infty} V^a.$

The {\em normal form theorem} \cite{CarlZom,CrawBo} for persistence modules states that the isomorphism class of $V \in \pemod$ is classified by a finite multiset $\cB(V) = \{(I_k,m_k)\}_{1 \leq k \leq N'}$ of intervals $I_k \subset \R,$ where $I_k = (a_k,b_k)$ for $k \in (0,K] \cap \Z,$ and $I_k = (a_k,\infty) $ for $k \in (K,N'] \cap \Z$ for some $0 \leq K= K(V) \leq N'.$ We denote $B=B(V) = N'-K \geq 0.$  The intervals are called bars, and a multiset of bars is called a barcode. The bar lengths are defined as $|(a_k,b_k)| = b_k - a_k,$ and $|(a_k,\infty)| = +\infty.$

The {\em isometry theorem} for persistence modules \cite{CdSGO-structure,BauLes,CCSGGO-proximity}, culminating the active development initiated in \cite{CEH-stability},  states the fact that the barcode map \[\cB:(\pemod,d_{\mrm{inter}}) \to (\barc,d_{\mrm{bottle}})\] \[V \mapsto \cB(V)\] is {\em isometric} for the following two distances. 

The {\em interleaving distance} between $V,W \in \pemod$ is given by \begin{align*}d_{\mrm{inter}}(V,W) = \inf \{\delta > 0\;|\; \exists &f \in \hom(V,W[\delta]), g \in \hom(W,V[\delta]), \\ &g[\delta]\circ f = sh_{2\delta,V}, f[\delta]\circ g = sh_{2\delta,W} \},\end{align*} where for $V \in \pemod,$ and $c \in \R,$ $V[c] \in \pemod$ is defined by pre-composition with the functor $T_c: (\R,\leq) \to (\R,\leq), t\to t+c,$ and for $c \geq 0,$ $sh_{c,V} \in \hom (V,V[c])$ is given by the natural transformation $\id_{(\R,\leq)} \to T_c.$ The pair $f,g$ from the definition is called a {\em $\delta$-interleaving}. A priori, $d_{\mrm{inter}}(V,W) \in \R_{\geq 0} \cup \{ \infty \},$ and it is finite if and only if $V^{\infty} \cong W^{\infty}.$   

The {\em bottleneck distance} between $\cB,\cC \in \barc$ is given by \[d_{\mrm{bottle}}(\cB,\cC) = \inf \bra{\delta > 0\,|\;\exists\; \delta-\text{matching between} \;\cB,\cC },\] where a $\delta$-matching between $\cB,\cC$ is a bijection $\sigma:\cB^{2\delta} \to \cC^{2\delta}$ between two sub-multisets $\cB^{2\delta} \subset \cB,$ $\cC^{2\delta} \subset \cC,$ each containing all the bars of length $> 2\delta$ of $\cB,$ $\cC$ respectively, such that if $\sigma((a,b)) = (a',b')$ then $|a-a'|\leq \delta,$ $|b-b'| \leq \delta.$ Observe that $d_{\mrm{bottle}}(\cB,\cC) \in \R_{\geq 0} \cup \{ \infty \}$ if and only if $\cB, \cC$ have the same number of infinite bars.

Finally, we record the quotient space $(\barc',d'_{\mrm{bottle}})$  of $(\barc,d_{\mrm{bottle}})$ by the isometric $\R$-action by shifts: $c \in \R$ acts by $\cB = \{(I_k,m_k)\} \mapsto \cB[c] = \{(I_k - c, m_k)\},$ and $d'_{\mrm{bottle}}([\cB],[\cC]) = \inf_{c\in \R} d'_{\mrm{bottle}}(\cB,\cC[c])$ for $\cB,\cC \in \barc.$ Note that bar-lengths give a well-defined map from $\barc'$ to multi-subsets of $\R_{>0} \cup \bra{+\infty}.$

\subsubsection{Floer persistence: interleaving, invariance, spectral norms}

As remarked in Section \ref{subsec:abs-Ham} for each $r \in \Z,$ the degree $r$ subspace $C_r$ of the Floer complex $C$ considered therein is finite-dimensional over the base field $\bK.$ This implies that the degree $r$ homology $H_r(C)^{<a} = H_r(C^{<a})$ is in $\mrm{Vect}_{\bK}$ for all $a \in \R.$ Furthermore, inclusions $C^{<a} \to C^{<a'}$ of graded complexes for $a \leq a',$ yield maps $\pi^{a,a'}:H_r(C)^{<a} \to H_r(C)^{<a'}.$ As it was first observed in \cite{PolShe} (see also \cite{PolSheSto}), the collection $V_r(C)$ of the vector spaces $\{H_r(C)^a\}_{a\in \R},$ and maps $\{\pi^{a,a'}\},$ constitutes an object of the category $\pemod.$ We will denote this persistence module by $V_r(H;\cD),$ in general, and by $V_r(H),$ when $H$ is non-degenerate. We use these notations interchangeably, with the understanding that the former is used in the degenerate case, where the Hamiltonian terms in the perturbation data is considered to be as $C^2$-small as necessary, and the latter is used in the non-degenerate case.

Finally, by \cite{PolShe,PolSheSto} Floer continuation maps induce $\cE(H-H')$-interleavings between the pairs $V_r(H),V_r(H').$ This implies that \begin{equation}\label{eq: barc Hofer Lip} d_{int}(V_r(H),V_r(H')) \leq \til{d}_{\mrm{Hofer}}(\til{\phi}_H,\til{\phi}_{H'}) \end{equation} where $\til{d}_{\mrm{Hofer}}$ is the Hofer pseudo-metric on $\til{\Ham}(M,\om)$ defined by \[\til{d}_{\mrm{Hofer}}(\til{\phi}_{H},\til{\phi}_{H'}) = \inf \cE(F-G),\] the infimum running over all $F,G \in \cH$ with $\til{\phi}_{F} = \til{\phi}_{H},$ $\til{\phi}_{G} = \til{\phi}_{H'}.$

We recall the ring $\Lambda_{\tmon} = \bK[t^{-1},t]]$ with variable $t$ of degree $(-1).$ For $H \in \cH,$  consider the associated Floer persistence module $V_0(H)$ of degree $0$ with coefficients in $\Lambda_{\tmon}.$ Let $\cB_0(H)$ be the corresponding barcode.  By \cite{KS-bounds} (see also \cite[Propositions 5.3, 6.2]{UsherBD2}) the image $\cB'_0(\phi^1_H)$ of this barcode in $(\barc',d'_{\mrm{bottle}})$ depend only on $\phi^1_H.$ Finally, by a change of coordinates given by $\psi \in \Symp(M,\om),$ there is an identity of barcodes \begin{equation}\label{eq:conj invariance barcodes}\cB'(\phi) = \cB'(\psi \phi \psi^{-1})\end{equation}

We define the bar-length spectrum of $\phi^1_H$ to coincide with the corresponding sub-multiset of $\R_{>0} \cup \bra{+\infty}$ arranged as an increasing sequence, taking into account multiplicities. We define the {\em boundary depth} $\beta(\phi)$ of $\phi$ to be the {\em maximal length of a finite bar} in the associated barcode. This notion was first introduced by Usher \cite{UsherBD1,UsherBD2} in different terms, and shown to satisfy various properties, including the above invariance statement.

Finally, in view of \eqref{eq: barc Hofer Lip}, we obtain for $\phi, \psi \in \Ham(M,\om),$ \begin{equation}\label{eq: barc' Hofer Lip}  d'_{\mrm{bottle}}(\cB'(\phi),\cB'(\psi)) \leq {d}_{\mrm{Hofer}}(\phi,\psi)\end{equation} where \[d_{\mrm{Hofer}}(\phi,\psi) = \inf_{\phi^1_F = \phi, \phi^1_G = \psi} \til{d}(\til{\phi}_{F},\til{\phi}_{G})\] is the celebrated Hofer metric \cite{HoferMetric,Lalonde-McDuff-Energy} on $\Ham(M,\om).$ The method of filtered continuation elements introduced in \cite{KS-bounds}, with inspiration from \cite{BiranCorneaS-Fukaya,AK-simplehomotopy}, was used to improve \eqref{eq: barc' Hofer Lip} to \begin{equation}\label{eq: barc' spectral Lip}
d'_{\mrm{bottle}}(\cB'(\phi),\cB'(\psi)) \leq \frac{1}{2}{\gamma}(\phi,\psi). 
\end{equation}
This was also extended to the relative setting therein. Proposition \ref{prop: beta leq gamma} below is yet another extension of this result.

\subsubsection{Barcode of Hamiltonian diffeomorphism with isolated fixed points}\label{subsubsec: barcode isolated}
Furthermore, it was shown in \cite{S-Zoll} following \cite{LSV-conj}, that if $\phi$ has isolated fixed points, then the barcode $\cB'(\phi)$ consists of a finite number of bars, of them $B(\bK) = \dim_{\bK} H_*(M,\bK)$ are infinite, and $K(\phi,\bK)$ are finite, where $N(\phi, \bK)$ as defined in \eqref{eq: homological count N} satisfies $N(\phi,\bK) = 2K(\phi, \bK) + B(\bK).$ Furthermore, for a sequence $\phi_j$ of $C^2$-small Hamiltonian perturbations of $\phi,$ where the norm of the perturbation goes to zero, $\cB'(\phi_j) \to \cB'(\phi)$ in $d'_{\mrm{bottle}}.$ Finally, the bar-lengths are given by differences $\cA_H(\ol{x}) - \cA_H(\ol{y})$ of points in $\spec(H),$ with mean-index $\Delta(H, \ol{x}) \in [-2n, 2N_M+2n),$ $\Delta(H, \ol{y}) \in [-2n-1, 2N_M+2n-1),$  for $H \in \cH$ generating $\phi$ (see Section \ref{subsubsec: mean-index} for the definition of the mean-index, and the support of local Floer homology in particular). Therefore set of possible bar-lengths is finite, and hence there is a constant $\epsilon_1$ such that each bar-length $\beta,$ counted {\em without} multiplicity, is the unique bar-length in the interval $(\beta-\epsilon_1, \beta+\epsilon_1) \subset \R_{>0}.$

We conclude this section by observing that \eqref{eq: barc' Hofer Lip} implies that the boundary depth $\beta(\phi)$ is $1$-Lipschitz in the Hamiltonian spectral norm. In particular $\beta(\phi),$ similarly to $\gamma(\phi),$ is defined for arbitrary $\phi \in \Ham(M,\om).$ 

\subsubsection{Bar-lengths, extended coefficients, and torsion exponents.}\label{subsubsec: bar-length three flavors}

We note the following two alternative descriptions of the bar-length spectrum. Firstly, consider each one of the relevant Floer complexes $(C,d)$ over $\Lambda = \Lambda_{\tmin},$ filtered by $\cA$ as in Section \ref{subsec:non-Arch}. By \cite{UsherZhang}, the complex $(C,d)$ admits an orthogonal basis \[E = (\xi_1,\ldots,\xi_{B},\eta_1,\ldots,\eta_K,\zeta_1,\ldots,\zeta_K)\] such that $d\xi_j = 0$ for all $j \in (0,B] \cap \Z,$ and $d \zeta_j = \eta_j$ for all $j \in (0,K] \cap \Z.$ The finite bar-lengths are then given by $\bra{\beta_j = \beta_j(C,d) = \cA(\zeta_j) - \cA(\eta_j)}$ for $j \in (0,K] \cap \Z,$ which we assume to be arranged in increasing order, while there are $B$ infinite bar-lengths, corresponding to $\xi_j$ for $j \in (0,B] \cap \Z.$ We note that this description yields the identity $N = B + 2K,$ where the numbers $N,B,K$ can be computed via $N = \dim_{\Lambda} C,$ $B = \dim_{\Lambda} H(C,d),$ and $K = \dim \ima(d).$ Furthermore, from this description it is evident that for a field extension $\mathbb{L}$ of $\mathbb{K},$ the bar-length spectrum with ground field $\mathbb{L}$ is identical to that with ground field $\bK.$

Extending coefficients to $\Lambda_{\tuniv},$ we can further normalize the basis $E$ to obtain the orthonormal basis \begin{align*}\overline{E} &= (\bar{\xi}_1,\ldots,\bar{\xi}_{B},\bar{\eta}_1,\ldots,\bar{\eta}_K,\bar{\zeta}_1,\ldots,\bar{\zeta}_K) = \\ &= (T^{\cA(\xi_1)}\xi_1,\ldots,T^{\cA(\xi_B)}\xi_{B},T^{\cA(\eta_1)}\eta_1,\ldots,T^{\cA(\eta_K)}\eta_K,T^{\cA(\zeta_1)}\zeta_1,\ldots,T^{\cA(\zeta_K)}\zeta_K).\end{align*} This basis satisfies $d\bar{\xi}_j = 0$ for all $j \in (0,B] \cap \Z,$ and $d \bar{\zeta}_j = T^{\beta_j} \bar{\eta}_j$ for all $j \in (0,K] \cap \Z.$ We note that passing to this basis has the following computational advantage. First, each two orthonormal bases are related by a linear transformation $T:\overline{C} \to \overline{C}$ with matrix in $GL(N,\Lambda^0_{\tuniv}).$ Second, to compute the bar-length spectrum, it is sufficient to consider the matrix $[d]$ of $d:\overline{C} \to \overline{C}$ in any orthonormal basis, for example the canonical one from Section \ref{subsec:non-Arch}, and bring it to Smith normal form over $\Lambda^0_{\tuniv}.$ The diagonal coefficients, in order of increasing valuations, will be $\{T^{\beta_j}\}.$ We remark that while $\Lambda^0_{\tuniv}$ is not a principal ideal domain, each of its finitely generated ideals is indeed principal, and therefore Smith normal form applies in this case. 

It shall be important to remark that the considerations regarding the Smith normal form and the bar-length spectrum apply to the case of arbitrary complexes $(C,d)$ over $\Lambda$ with non-Archimedean filtration function $\cA.$ If $d^* \cA \leq \cA,$ where the inequality is not necessarily strict for non-zero elements, then the bar-length spectrum may contain a few entries $\beta_j = 0.$ Following \cite{UsherZhang}, we shall call this bar-length spectrum {\em verbose}, from which the {\em concise} bar-length spectrum is obtained by recording only the strictly positive entries.

In fact given a filtered map $D:(C,\cA) \to (C',\cA')$ between two filtered $\Lambda$-modules, that is $D^* \cA' \leq \cA,$ it is shown in \cite{UsherZhang} that $D$ has a {\em non-Archimedean spectral value decomposition}: that is orthogonal bases $E = E_{\mrm{coim}} \sqcup E_{\ker}$ of $(C,\cA),$ and $E' = E_{\ima} \sqcup E_{\mrm{coker}}$ of $(C',\cA')$ such that $D(E_{\ker}) = 0,$ while $D|_{E_{\mrm {coim}}}:E_{\mrm{coim}} \xrightarrow{\sim} E_{\mrm{im}}$ is an isomorphism of sets. The spectral values consist of the numbers $\beta_e = \cA(e) - \cA'(D(e))\geq 0$ for $e \in E.$ Over $\Lambda = \Lambda_{\tuniv},$ we may instead ask for $E,E'$ to be orthonormal, and require that for each $e \in E_{\mrm{coim}},$ there exists $e' \in E_{\mrm{im}},$ and $\beta_e \geq 0,$ such that $D(e) = T^{\beta_e} e',$ and $D(E_{\ker}) = 0.$ It is easy to see that the spectral values correspond directly to the bar-lengths for the filtered complex \[(Cone(D),\cA \oplus \cA'),\] given as a $\Lambda$-module by $C \oplus C',$ with filtration $\cA \oplus \cA' = \max\{\cA,\cA'\},$ and differential \[d_{Cone}(c,c') = (d_C(c), D(c) - d_{C'}(c') ).\]

Finally, following \cite{FOOO-polydiscs}, it is easy to see that the matrix of the differential $d$ in the canonical basis from Section \ref{subsec:non-Arch} has all coefficients in $\lambzero_{\tuniv}.$ Indeed, the coefficient of $\overline{x_i}$ in $d\overline{x}_j$ is given by \[\left<d\bar{x}_j, x_i\right> = \sum \epsilon(u) T^{E(u)} \in \lambzero_{\tuniv},\] where $E(u)$ is the energy of $u$ as a negative gradient trajectory of the corresponding action functional, $\epsilon(u)$ is a sign determined by a suitable orientation scheme, and the sum runs over all isolated (modulo $\R$-translations) negative gradient trajectories asymptotic to $x_j$ at times $s \to -\infty,$ and to $x_i$ at times $s \to +\infty.$ Therefore, one can define the Floer complex from Section \ref{subsec:abs-Ham} with coefficients in $\lambzero_{\tuniv}.$ Its homology will be a finitely generated $\lambzero_{\tuniv}$-module, and will therefore have the form $\mathcal{F} \oplus \cl{T},$ where $\cl F$ is a free $\lambzero_{\tuniv}$-module, and $\cl T$ is a torsion $\lambzero_{\tuniv}$-module. The bar-lengths in this setting are given by the identity \[\cl T \cong \bigoplus_{1 \leq j \leq K}\lambzero_{\tuniv}/(T^{\beta_j}).\] We summarize the above discussion as follows.

\medskip
\begin{lma}\label{lemma: different barcodes}
The three definitions of the bar-length spectrum for Hamiltonian Floer homology on monotone symplectic manifolds, due to Fukaya-Oh-Ohta-Ono \cite{FOOO-polydiscs}, Polterovich-Shelukhin \cite{PolShe} (see also \cite{PolSheSto, KS-bounds}), and Usher-Zhang \cite{UsherZhang}, respectively, coincide. 
\end{lma}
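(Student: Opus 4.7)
The plan is to exhibit all three bar-length spectra as features of the same Usher-Zhang normal form of the filtered Floer complex $(C,d,\cA)$, after base change to $\Lambda_{\tuniv}$. The needed ingredients are already assembled in the preceding exposition; the task is to organize them so that each of the three definitions can be read off a single decomposition.

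First, invoke the Usher-Zhang theorem to produce the orthogonal basis $E=(\xi_1,\ldots,\xi_B,\eta_1,\ldots,\eta_K,\zeta_1,\ldots,\zeta_K)$ of $(C,d,\cA)$ over $\Lambda_{\tmin}$ with $d\xi_j=0$, $d\zeta_j=\eta_j$, and Usher-Zhang bar-lengths $\beta_j=\cA(\zeta_j)-\cA(\eta_j)$. Extend coefficients to $\Lambda_{\tuniv}$ and rescale each generator by the appropriate power of $T$ to obtain the orthonormal basis $\overline E$ in which $d\bar\xi_j=0$, $d\bar\eta_j=0$, and $d\bar\zeta_j=T^{\beta_j}\bar\eta_j$. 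Since the bar-length spectrum is invariant under field extensions, this passage does not alter the Usher-Zhang numbers. The basis $\overline E$ exhibits $\overline C$ as an orthogonal direct sum of $B$ one-dimensional acyclic summands and $K$ two-dimensional summands with differential given by multiplication by $T^{\beta_j}$.

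To recover the Polterovich-Shelukhin barcode, observe that in each fixed degree the persistence module $V_r(\overline C)$ splits according to this orthogonal decomposition: each $\bar\xi_j$ contributes an infinite interval summand based at $\cA(\xi_j)$, and each pair $(\bar\eta_j,\bar\zeta_j)$ contributes an interval summand of length $\beta_j$, since the class of $\bar\eta_j$ in homology is born at filtration $\cA(\eta_j)$ and killed by $\bar\zeta_j$ at filtration $\cA(\zeta_j)=\cA(\eta_j)+\beta_j$. The normal form theorem for $\pemod$ then identifies this direct sum of interval modules as the Polterovich-Shelukhin barcode. To recover the Fukaya-Oh-Ohta-Ono torsion exponents, restrict the same decomposition to the $\lambzero_{\tuniv}$-lattice spanned by $\overline E$; this is well-defined because, as the excerpt shows via the expansion $\langle d\bar x_j,x_i\rangle=\sum\epsilon(u)T^{E(u)}$, the matrix of $d$ in any orthonormal basis has entries in $\lambzero_{\tuniv}$. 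The homology then splits as a free $\lambzero_{\tuniv}$-part of rank $B$ together with torsion summands $\lambzero_{\tuniv}/(T^{\beta_j})$, which is exactly the FOOO data.

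The main obstacle is to justify that one \emph{single} splitting of $\overline C$ simultaneously respects the filtration, the differential, and the integral $\lambzero_{\tuniv}$-form, so that each of the three definitions extracts the same $\beta_j$ regardless of the choice of basis. The key inputs, both flagged in the excerpt, are that any two orthonormal bases of $\overline C$ are related by a matrix in $GL(N,\lambzero_{\tuniv})$ --- preserving both the Smith normal form invariants and the filtered direct sum structure up to isomorphism --- and that every finitely generated ideal of $\lambzero_{\tuniv}$ is principal, so that Smith normal form applies to $H(\overline C^0)$ as a finitely presented $\lambzero_{\tuniv}$-module. With these in hand, the three flavors of bar-length spectrum are literally three facets of the same normal form, and the lemma follows.
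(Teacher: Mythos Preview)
Your proposal is correct and follows essentially the same approach as the paper: the lemma is stated as a summary of the preceding discussion in Section \ref{subsubsec: bar-length three flavors}, and your argument organizes that discussion---the Usher-Zhang orthogonal normal form, its orthonormalization over $\Lambda_{\tuniv}$, the Smith normal form over $\lambzero_{\tuniv}$, and the resulting torsion decomposition---into exactly the comparison the paper intends. The paper gives no separate proof beyond that exposition and a reference to \cite{UsherZhang, KS-bounds, S-Zoll} for further details, so your write-up is in fact more explicit than what appears in the text.
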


We refer to \cite{UsherZhang,KS-bounds, S-Zoll} for more details of the identifications between the various descriptions of the bar-length spectrum, and notions of persistence.

\subsubsection{$\delta$-quasi-equivalences}\label{sec: quasi-eq}

Finally, we shall require a version of the notion of $\delta$-interleaving in the $\lamzero$ setting. We call two $\lamzero$-complexes $(C,d),$ $(C',d')$ $\delta$-quasi-equivalent if there exist $\lamzero$-chain maps \begin{align*}F&: (C,d) \to (C',d'),\\ G &: (C',d') \to (C,d),\end{align*} and $\lamzero$-chain homotopies \begin{align*}H&: (C,d) \to (C,d),\\ H'&: (C',d') \to (C',d'),\end{align*} such that \begin{align} G\circ F - T^\delta \id_C &= d H + H d\\ \notag F\circ G - T^\delta \id_{C'} &= d H' + H' d.\end{align} In this case $C \otimes \Lambda,$ $C' \otimes \Lambda$ with the natural induced filtration are $\delta/2$-quasi-equivalent in the sense of Usher-Zhang \cite{UsherZhang}: the quasi-equivalences are then given by $T^{-\delta/2} F, T^{-\delta/2} G,$ and homotopies by $T^{-\delta} H, T^{-\delta} H'.$ In this case it follows immediately from \cite[Theorem 1.4]{UsherZhang} that the bar-length spectra \[\{\beta_j \}_{1 \leq j \leq K+B}, \{\beta'_j \}_{1 \leq j \leq K'+B'}\] of $(C,d), (C',d')$ satisfy: $\beta_{B+K-j} = + \infty$ if and only if $\beta'_{B'+K'-j} = + \infty,$ in other words $B = B'.$ Moreover, $\beta_{K-j} > 4\delta$ if and only if $\beta'_{K'-j} > 4 \delta.$ Finally, for such $j\geq 0,$ \[|\beta_{K-j} - \beta_{K'-j}| < 2\delta.\] We say that these two bar-length spectra are {\em $2\delta$-close}.

In addition we remark that a $\delta_1$-quasi-equivalence $F_1, G_1,$ with homotopies $H_1, H'_1$ between $(C_0,d_0),$ and $(C_1,d_1)$ and a $\delta_2$-quasi-equivalence $F_2, G_2,$ with homotopies $H_2, H'_2$ between $(C_1,d_1),$ and $(C_2,d_2)$ give rise to a ${(\delta_1+\delta_2)}$-quasi-equivalence $F=F_2 \circ F_1,$ $G = G_1 \circ G_2,$ between $(C_0,d_0),$ and $(C_2,d_2),$ with suitable homotopies $H,H'$ given in terms of the above quasi-equivalences. 

Furthermore, $(C,d)$ and $(C,T^{\delta}d)$ for $\delta \geq 0$ are $\delta$-quasi-equivalent. This is easy to see by choosing a $\lamzero$-basis $\{\ol{\xi}_i, \ol{\eta}_j, \ol{\zeta}_j\}_{1 \leq i \leq B,\; 1 \leq j \leq K}$ for $C$ such that $d\ol{\xi}_i = 0,$ $d\ol{\zeta}_j = T^{\beta_j} \ol{\eta}_j$ for all $1 \leq i \leq B,$ $1 \leq j \leq K.$ Then $F:(C,d) \to (C,T^{\delta}d),$ $G:(C,T^{\delta}d) \to (C,d),$ forming a $\delta$-quasi-equivalence, are given by $F(\ol{\xi}_i) = T^{\delta/2} \ol{\xi}_i, F(\ol{\eta}_j) = T^{\delta} \ol{\eta}_j, F(\ol{\zeta}_j) = \ol{\zeta}_j$ and $G(\ol{\xi}_i) = T^{\delta/2} \ol{\xi}_i, G(\ol{\eta}_j) = \ol{\eta}_j, G(\ol{\zeta}_j) = T^{\delta} \ol{\zeta}_j$ for all $1 \leq i \leq B,$ $1 \leq j \leq K.$ In this case we can take $H = 0,$ $H' = 0.$

\subsubsection{Operations and $\lamzero$}\label{subsubsec: operations and lambzero}
In a related direction, we observe that when a Floer-homological operation, arising from a moduli space of marked Riemann surfaces, does {\em not} have zero curvature, which is for example the case of a Floer continuation map that changes the Hamiltonian function, there is a subtlety in defining this operation over $\lamzero.$ The subtlety appears since for compositions to function properly, we need to consider the {\em topological energy} of the curves we count, that is, the difference of actions (see \cite[Section (8g)]{SeidelBook}, \cite[Section 3.3]{BiranCorneaS-Fukaya}, or \cite[Section 7]{SZhao-pants}), however these can be {\em negative} because of the presence of curvature.

We resolve this question, as in \cite{FOOO-polydiscs}, by counting the Floer solutions $u$ with weight $T^{E_{top}(u)},$ and subsequently multiplying the resulting map $O$ by $T^C$ for a constant $C$ greater than the uniform norm of the (negative contribution of the) curvature over the corresponding compactified universal curve. Then $T^C O$ is defined over $\lambzero.$ 

For example, given Hamiltonians $F, G,$ we pick $C_1 = \cl{E}_{+}(F-G)+ \epsilon,$ and $C_2 = {\cl{E}_{+}(G-F)}+\epsilon.$ Then, considering small perturbations of the resulting equations, we obtain Floer continuation maps $T^{C_1} C(F,G),$ $T^{C_2}C(G,F),$ such that their compositions are homotopic over $\lamzero$ to $T^{C_1 + C_2} \id$ in either direction.

\subsubsection{Local Floer homology and a canonical $\lambzero$-complex}\label{subsubsec: local FH}

In this section we describe the local Floer homology $HF^{\loc}(\phi,x)$ of $\phi$ at an isolated fixed point $x \in \fix(\phi).$ We refer to \cite{Ginzburg-CC, GG-local-gap} as well as \cite{SZhao-pants} for more details on the description of local Floer cohomology. 

In various settings, for example in the symplectically aspherical case, it is known that the local Floer homologies act as "building blocks" for the global Floer homology. For example, the action filtration induces a spectral sequence converging to the Floer homology in an suitable action window. In the monotone case, the approach of the action filtration runs into the difficulty that the same orbit $x$ can contribute to the homology in the action window several times, entering with different cappings, and it is difficult to distinguish the different contributions.

In order to deal with this issue, we develop a version of the above spectral sequence by working with coefficients in $\lambzero_{\tuniv},$ removing the recapping ambiguity, and describing a homotopy-canonical $\lambzero$ complex on the sum of the local homology groups that computes the bar-length spectrum in the barcode of $\phi.$ The latter is known to be finite: see \cite{LSV-conj} and \cite{S-Zoll}.

{\em Local Floer homology:}

Let $\phi \in \Ham(M,\om).$ Given an isolated fixed point $x$ of $\phi,$ there exists an isolating neighborhood $U$ of $x$ (more precisely, of the image of the section $\sigma_x(t) = x(t)$ of $S^1 \times M \to S^1$) for Floer homology. This means in particular, all Floer trajectories of each sufficiently $C^2$ small non-degenerate Hamiltonian perturbation $\phi_1$ of $\phi$ between generators contained in $U$ are themselves contained in $U,$ and the resulting Floer homology as computed inside $U,$ is well-defined and independent of the perturbation. This homology is called the local Floer homology $HF^{\loc}(\phi,x)$ of $\phi$ at $x.$ As suggested by the notation, whenever the local Floer homology is considered as an {\em ungraded} $\bb K$-module it depends on $\phi, x$ and no additional data. We refer to \cite{Pozniak, Floer-MorseWitten, Floer3} for earlier developments in the subject.

We recall the following additional properties of $HF^{\loc}(\phi,x).$ First if $x$ is non-degenerate as a fixed point of $\phi,$ then as ungraded $\bb K$-modules, \[HF^{\loc}(\phi,x) \cong \bb K.\] In fact, there is a canonical $\Z/(2)$-grading on $HF^{\loc}(\phi,x).$ Then in the non-degenerate case, the $\bb K$ factor will be in the component of $CZ(\til{\phi},\ol{x})+n$ mod $2.$ Furthermore, in view of action arguments in \cite{GG-revisited,GG-hyperbolic,McLean-geodesics} or \cite[Section 7]{SZhao-pants}, for two distinct fixed points $x,y \in \fix(\phi),$ there exists a {\em crossing energy} $2\epsilon_0 > 0,$ such that all Floer trajectories, or product structures considered in this paper, with $x,y$ among their asymptotics, carry energy of at least $2\epsilon_0.$ This shall be important for the following arguments.

We call an iteration $\phi^k$ of $\phi$ {\em admissible} at a fixed point $x \in \fix(\phi)$ if $\lambda^k \neq 1$ for all eigenvalues $\lambda\neq 1$ of $D(\phi)_x.$ Observe that all sufficiently large prime iterations $p,$ and their powers $p^n,$ are admissible. We shall require the following result, which is part of \cite[Theorem 1.1]{GG-local-gap}, on the behavior of local Floer homology under iteration. It does not depend on the choice of coefficients.

\begin{thm}\label{thm: GG persistence}
Let $k$ be an admissible iteration of $\phi$ at $x.$ Then the fixed point $x^{(k)}$ of $\phi^k$ is isolated, and $HF^{\loc}(\phi^k,x^{(k)}) \cong HF^{\loc}(\phi,x),$ 
\end{thm}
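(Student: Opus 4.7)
The plan is to verify the two assertions in Theorem \ref{thm: GG persistence} separately: first that $x^{(k)}$ is isolated as a fixed point of $\phi^k$, and then that the local Floer homologies agree as ungraded $\bK$-modules.

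\textbf{Isolation of $x^{(k)}$.} The eigenvalues of $D(\phi^k)_x = (D\phi_x)^k$ are the $k$-th powers of those of $D\phi_x$, so admissibility forces $\ker(D(\phi^k)_x - \id) = \ker(D\phi_x - \id)$, and more generally the generalized $1$-eigenspaces of the two linearizations coincide. In a Darboux chart near $x$, writing the fixed-point equations $\phi(y) = y$ and $\phi^k(y) = y$ and comparing via the implicit function theorem on the complement of the generalized $1$-eigenspace shows that the germs of $\fix(\phi)$ and $\fix(\phi^k)$ at $x$ coincide. Hence $x^{(k)}$ is isolated.

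\textbf{Matching generators.} Fix an isolating neighborhood $U$ of $x$ for $\phi$, and choose a $C^2$-small non-degenerate perturbation $\phi' \in \Ham(M,\om)$ of $\phi$, with fixed points $\{x_1,\dots,x_m\} \subset U$ computing $HF^{\loc}(\phi,x)$. Admissibility of the iterate $k$ is an open condition on the spectrum of the linearization, so if $\phi'$ is sufficiently close to $\phi$, then $k$ remains admissible at each $x_i$, and each $x_i^{(k)}$ is a non-degenerate fixed point of $(\phi')^k$. Combining this with the isolation step (applied to each $x_i$ in place of $x$) and shrinking the perturbation, one selects an isolating neighborhood $U^{(k)}$ of $x^{(k)}$ for $\phi^k$ in which the only fixed points of $(\phi')^k$ are exactly $\{x_1^{(k)},\dots,x_m^{(k)}\}$; in particular, no genuinely $k$-periodic orbit of $\phi'$ intersects $U^{(k)}$.

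\textbf{Matching differentials.} By the crossing-energy estimate recalled at the end of Section~\ref{subsubsec: local FH}, there is $\epsilon_0 > 0$ such that any Floer cylinder contributing to the local differential and leaving $U$ (respectively $U^{(k)}$) must carry energy at least $2\epsilon_0$. For perturbations $\phi'$ small enough in $C^2$-norm, all Floer cylinders between $x_i,x_j$ for $\phi'$ and between $x_i^{(k)},x_j^{(k)}$ for $(\phi')^k$ lie strictly below this energy threshold, hence are confined to the respective isolating neighborhoods. The $k$-fold cover $u(s,t) \mapsto u(ks,kt)$ of the Floer cylinder, together with the corresponding reparametrization of the Hamiltonian and almost complex data, then sets up a bijection between the two moduli spaces; checking signs via the orientation scheme identifies the two chain complexes and yields $HF^{\loc}(\phi^k,x^{(k)}) \cong HF^{\loc}(\phi,x)$.

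The main obstacle is this last step: a priori, a Floer cylinder for $(\phi')^k$ between iterated orbits need not be the $k$-fold cover of a cylinder for $\phi'$. The hard work, carried out in Ginzburg--Gürel's local analysis, is to show via a quantitative compactness and regularity argument—leveraging the crossing-energy bound and the smallness of the perturbation—that every contributing low-energy cylinder must indeed be of this covering form. Once this is established, the matching of complexes, and hence the ungraded isomorphism, follows.
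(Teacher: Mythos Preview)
The paper does not give its own proof of Theorem~\ref{thm: GG persistence}; it is quoted from \cite[Theorem~1.1]{GG-local-gap} and used as a black box. The relevant comparison is therefore with the argument of Ginzburg--G\"urel.

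Your proposal has a genuine gap at precisely the step you flag. You describe the ``hard work'' in \cite{GG-local-gap} as showing that every low-energy Floer cylinder for $(\phi')^k$ between iterated orbits is the $k$-fold cover of a cylinder for $\phi'$, yielding an identification of chain complexes. This is not what Ginzburg--G\"urel prove, and there is no reason to expect such a statement: multiple covers are typically not transversely cut out, so the covering map does not carry regular moduli spaces to regular moduli spaces, and conversely there can be simple cylinders for the iterate which are not covers of anything. The local Floer complexes for $\phi'$ and $(\phi')^k$ have the same generators but in general different differentials; it is only their homologies that agree.

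The actual argument in \cite{GG-local-gap} proceeds along a different route. One first reduces, via a local symplectic splitting near $x$ and a K\"unneth-type formula for local Floer homology, to the two extreme cases: the non-degenerate directions, where admissibility makes $x^{(k)}$ non-degenerate and both sides are one-dimensional, and the totally degenerate directions, where all eigenvalues of $D\phi_x$ equal $1$. In the totally degenerate case $\phi$ admits, near $x$, a generating function $F$ with $dF(x)=0$ and vanishing Hessian, and local Floer homology agrees with the local Morse homology of $F$. Iteration replaces $F$, up to a deformation through functions keeping $x$ an isolated critical point, by $kF$; invariance of local Morse homology under such deformations then yields the isomorphism. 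No comparison of Floer moduli spaces under iteration is involved.
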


{\em The canonical $\lambzero$-complex:}

Since this point is somewhat new, we describe it in more detail. Let $\phi_1$ be a sufficiently $C^2$-small non-degenerate Hamiltonian perturbation of $\phi.$ Then for each $x \in \fix(\phi),$ the periodic orbits $\cO(\phi_1,x)$ of $\phi_1$ in the neighborhood $U$ of $\phi$ form the local Floer complex $CF^{\loc}(\phi_1,x)$ of $\phi_1$ at $x.$ Consider the Floer complex $CF(\phi_1,\lambzero)$ of $\phi_1$ with coefficients in $\lambzero.$ Note that up to isomorphism of $\lambzero$-modules it does not depend on the choice of almost complex structures involved. We also mention that as our symplectic manifold is monotone, all elements $l = \sum a_j T^{\lambda_j}$ in $\lambzero$ involved in the differential are in fact {\em finite} in the sense that $a_j = 0$ for all $j$ sufficiently large. Our goal is to construct a homotopically-canonical $\lambzero$-complex $CF(\phi,\lambzero)$ with the following properties: as a $\lambzero$-modules it is given by \[CF(\phi,\lambzero) \cong \bigoplus_{x \in \fix(\phi)} HF^{\loc}(\phi,x) \otimes_{\bK} \lambzero,\] its differential is defined, strict, and is homotopy-canonical over $\lambzero,$ the homology of $CF(\phi,\Lambda) = CF(\phi,\lambzero) \otimes_{\lambzero} \Lambda$ is isomorphic to $HF(\phi_1,\Lambda) \cong QH_*(M,\Lambda),$ and the associated bar-length spectrum \[\beta'_1(\phi,\bK) \leq \ldots \leq \beta'_{K(\phi,\bK)} (\phi,\bK)\] satisfies $\beta'_1(\phi,\bK) > \epsilon_0,$ is $2\delta_0$-close to the part \[\beta_{K'+1}(\phi_1,\bK) \leq \ldots \leq \beta_{K'+K(\phi,\bK)} (\phi_1,\bK)\] of the bar-length spectrum of $\phi_1$ above $\epsilon_0,$ while $\beta_{K'}(\phi_1,\bK) < 2\delta_0 \ll \epsilon_0.$ For $\delta_0 \ll \eps_0$ a small parameter converging to $0$ as $\phi_1$ converges to $\phi$ in the $C^2$-topology on the Hamiltonian. Moreover, the $\beta'_j(\phi,\bK)$ for $1 \leq j \leq K(\phi,\bK)$ have a limit $\beta_j(\phi, \bK)$ as the Hamiltonian perturbation goes to zero in the $C^2$ topology.

Furthermore, as a consequence (see Lemma \ref{lma:endpoints}), for \[N(\phi,\bK) =  \displaystyle\sum_{x \in \fix(\phi)} \dim_{\bK} HF^{\loc}(\phi,x)\] and $B(\bK) = \dim_{\bK} H_*(M,\bK)$ we have \[ N(\phi,\bK) =  2 K(\phi,\bK) + B(\bK).\]

The construction proceeds as follows. We first consider $CF(\phi_1,\lambzero).$ By the key property of the crossing energy, the differential $d_{\phi_1}$ in this Floer complex satisfies \[d_{\phi_1} = d_{\loc,\phi_1} + T^{\epsilon_0} D\] for an operator $D$ defined over $\lambzero,$ and the differential $d_{\loc,\phi_1}$ is the direct sum of differentials in the local Floer complexes $CF^{\loc}(\phi_1,x)$ taken with $\lambzero$ coefficients. Furthermore, as $\phi_1$ is a $C^2$-small perturbation of $\phi,$ by standard action-energy estimates in Floer theory (see for example \cite[Section 7]{SZhao-pants}), all the powers of $T$ involved in \[d_{\loc,\phi_1}: CF(\phi_1) \to CF(\phi_1)\] are at most $\delta_0 \ll \epsilon_0.$ 

In fact, considering a capping $\ol{x}$ of the orbit obtained from $x \in \fix(\phi)$ by a Hamiltonian flow of $H \in \cl{H}$ generating $\phi = \phi^1_H,$ for a sufficiently small perturbation $\phi_1,$ the orbits obtained by the perturbed Hamiltonian flow of $H_1 \in \cl{H}$ from fixed points in $\cO(\phi_1,x)$ will inherit cappings from $\ol{x}.$ Moreover, all their actions will be all $\delta_0/2$-near $\cA_H(\ol{x}).$ They yield the local Floer complex $CF^{\loc}(H_1, \ol{x}),$ where we ignore grading. Finally, as in Section \ref{subsubsec: bar-length three flavors}, the torsion exponents of $CF^{\loc}(\phi_1,x)$ are identified with the bar-length spectrum of $CF^{\loc}(H_1, \ol{x}).$ Furthermore, as in Section \ref{subsubsec: barcode isolated}, each of the latter bar-lengths is given as the difference of the $H_1$-actions of two generators of $CF^{\loc}(H_1, \ol{x}),$ and is hence at most $\delta_0.$  Finally, $\delta_0 \ll \epsilon_0$ can be made arbitrarily small by choosing $\phi_1$ sufficiently close to $\phi.$

Choose a $\lambzero$-basis of $CF(\phi_1)$ compatible with $d_{\loc,\phi_1}:$  \[(\bar{\xi}_1,\ldots,\bar{\xi}_{B},\bar{\eta}_1,\ldots,\bar{\eta}_{K},\bar{\zeta}_1,\ldots,\bar{\zeta}_{K})\]
\[B = N(\phi,\bK) = \sum_{x \in \fix(\phi)}\dim_{\bK} HF^{\loc}(\phi,x),\] \[N' = N'(\phi_1,\bK) = \sum_{x \in \fix(\phi)} \dim_{\bK} CF^{\loc}(\phi_1,x) \otimes_{\lambzero} {\bK},\] 
\[ K' = K'(\phi_1,\bK) = (N' - B)/2,\] where we consider $\bK$ as the quotient of $\lambzero$ by its unique maximal ideal, with \[d_{\loc,\phi_1} (\bar{\xi}_j) = 0,\; d_{\loc, \phi_1} \bar{\zeta}_j = T^{\delta_j} \bar{\eta}_j,\] where $\delta_j = {\beta_j(\phi_1,\bK)} < \delta_0 \ll \epsilon_0.$ 

Consider $X = \lambzero \left< \{ \bar{\xi}_j\} \right>,$ isomorphic to the free part of the homology of the complex $(CF(\phi_1,\lambzero),d_{\loc,\phi_1}),$ $\pi_X: CF(\phi_1,\lambzero) \to X$ the natural projection, and $\iota_X: X \to CF(\phi_1,\lambzero)$ the natural inclusion. Furthermore, let $\Theta: CF(\phi_1, \Lambda) \to CF(\phi_1,\Lambda)$ be the operator \[ \Theta(\bar{\eta}) = T^{-\delta_j} \bar{\zeta}.\] (Note that $\Theta$ is {\em not} defined over $\lambzero$!)

Now, after tensoring with $\Lambda,$ there is a standard homotopy-canonical way, called the homological perturbation lemma \cite{Markl-ideal}, of constructing a differential $d_{\phi}$ on $H(CF(\phi_1,\Lambda),d_{\loc,\phi_1})$ so that 
\[(H(CF(\phi_1,\Lambda),d_{\loc,\phi_1}), d_{\phi}) = H(CF(\phi_1,\Lambda), d_{\phi_1}).\]

It remains to check that $d_{\phi}$ obtained in this way is in fact defined over $\lambzero$ and satisfies the above properties. Let us check well-definedness: indeed, $d_{\phi}$ is given by the formula: \[ d_{\phi} = \pi_X (T^{\epsilon_0}D + T^{2\epsilon_0} D \Theta D + T^{3\epsilon_0} D \Theta D \Theta D + \ldots ) \iota_X.\] Since $\delta_0 \ll \epsilon_0,$ the statement is now evident.

Finally, let us observe that $\pi_X: CF(\phi_1,\lambzero) \to X, \iota_X: X \to CF(\phi_1,\lambzero)$ upgrade to $\Lambda$-homotopy equivalences $\ol{\pi}_X,$ $\ol{\iota}_X$ between $CF(\phi_1,\Lambda)$ and the perturbed complex $(X, d_{\phi}),$ given by 

\[\ol{\pi}_X = \pi_X + \pi_X(T^{\epsilon_0}D + T^{2\epsilon_0} D \Theta D + T^{3\epsilon_0} D \Theta D \Theta D + \ldots) \Theta \]
\[\ol{\iota}_X = \iota_X + \Theta(T^{\epsilon_0}D + T^{2\epsilon_0} D \Theta D + T^{3\epsilon_0} D \Theta D \Theta D + \ldots) \iota_X.\]

These maps are similarly defined over $\lambzero,$ and show that over $\lambzero,$ $H(X, d_{\phi}) \cong H(CF(\phi_1,\lambzero))^{>\epsilon_0},$ the latter denoting the direct summand in $H(CF(\phi_1,\lambzero))$ containing the free part, and the torsion parts of torsion exponent $> \epsilon_0.$ Finally, \[\ol{\pi}_X \circ \ol{\iota}_X = \id,\] and over $\Lambda,$ \[\ol{\iota}_X \circ \ol{\pi}_X - \id = d \ol{\Theta} + \ol{\Theta} d \] where \begin{equation}\label{eq: ol Theta} \ol{\Theta} = \Theta + \Theta (T^{\epsilon_0}D + T^{2\epsilon_0} D \Theta D + T^{3\epsilon_0} D \Theta D \Theta D + \ldots) \Theta.\end{equation} As $\delta_0 \ll \epsilon_0,$ this shows that $ T^{\delta_0/2} \ol{\pi}_X, T^{\delta_0/2} \ol{\iota}_X$ is a $\delta_0$-quasi-equivalence between $(X, d_{\phi})$ and $CF(\phi_1,\lambzero).$ Indeed, $T^{\delta_0} \ol{\Theta}$ is then defined over $\lamzero.$

We now make the perturbation $\phi_1\phi^{-1}$ tend to zero in the $C^2$-topology on the Hamiltonian. Observe that the bar-lengths $\beta_j(X, d_{\phi_1})$ are $2\delta_0$-close (for $\delta_0$ tending to zero) to the bar-lengths of $\phi_1$ greater than $\epsilon_0.$ By Section \ref{subsubsec: barcode isolated}, the latter bar-lengths form convergent sequences, and their limits depend only on $\phi.$ Hence, the bar-lengths of $(X,d_{\phi_1})$ converge to $\beta_j(\phi,\bK),$ $1 \leq j \leq K(\phi,\bK)$ as in Section \ref{subsubsec: barcode isolated}.

\section{Proof of Theorem \ref{thm: bound semisimple}}\label{sec: gen proofs}

In this section we prove Theorem \ref{thm: bound semisimple}. It follows immediately from a combination of the following statements, Propositions \ref{prop: beta leq gamma} and \ref{prop: bounded beta K}, proven in Section \ref{section: proofs}.

\begin{prop}\label{prop: beta leq gamma}
	Let $\bK$ be a field. Let $E=(e_1,\ldots,e_S),$ $e_j \in QH_{2n}(M,\bK)$ for all $1 \leq j \leq S$ be idempotents that split $QH_{ev}(M,\bK),$ as an algebra, into a direct sum of algebras. Define for each $\til{\phi} \in \til{\Ham}(M,\om)$ the splitting-modified spectral norm as \[ \gamma_E(\til{\phi},\bK) = \max_{1 \leq j \leq K} \gamma_{e_j}(\til{\phi},\bK),\] \[\gamma_{e_j}(\til{\phi},\bK) =  c(e_j,\til{\phi},\bK) + c(e_j,\til{\phi}^{-1},\bK).\] Put $\gamma_E(\phi,\bK) = \inf_{pr(\til{\phi}) = \phi} \gamma_E(\til{\phi},\bK).$
	Then the bar-length spectrum of $\phi$ over $\bK$ is coarse Lipschitz in the pseudo-distance induced by the splitting-modified spectral norm over $\bK$. In fact  \[|\beta_j(\phi,\bK) - \beta_j(\psi,\bK)| \leq \gamma_E (\phi \psi^{-1},\bK) + 2n\] for all $j \in \Z_{\geq 0}.$ In fact the same is true for the class of the barcode modulo uniform shifts, in the bottleneck distance. 
\end{prop}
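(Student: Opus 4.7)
The strategy is to adapt the filtered continuation element technique of \cite{KS-bounds}, which gives the bound $d'_{\mrm{bottle}}(\cB'(\phi),\cB'(\psi)) \leq \tfrac12\gamma(\phi,\psi)$, by exploiting the idempotent decomposition to work separately on each algebra summand. The refined norm $\gamma_E$ uses individual spectral invariants $c(e_j,\cdot)$ rather than $c([M],\cdot) = \max_j c(e_j,\cdot)$, and replacing the fundamental class by the individual idempotents forces the appearance of the error term $2n$ controlled by $\cA(e_j) \leq 2n$ (since $e_j\in QH_{2n}(M,\bK)$ and the orthonormal basis of $QH(M)$ is drawn from a basis of $H_*(M,\bK)$).

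Via the PSS isomorphism and its intertwining of the quantum product with the pair-of-pants module action, the orthogonal decomposition $1=\sum_j e_j$ in $QH_{ev}(M,\bK)$ produces idempotent endomorphisms of the Floer persistence module $V(\til\phi)$, and thus a direct sum decomposition $V(\til\phi) \cong \bigoplus_{j=1}^S V^j(\til\phi)$, where $V^j(\til\phi) = e_j\cdot V(\til\phi)$. This decomposition holds up to $\lambzero$-quasi-equivalence in the sense of Section \ref{sec: quasi-eq} with shift bounded by $\cA(e_j)$. A key preliminary observation is that the barcode $\cB'(\til\phi)$, as a multiset, is the disjoint union of the barcodes of the summands $V^j(\til\phi)$, so a bound on each summand is enough.

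For fixed $j$ and $\til\xi = \til\phi\til\psi^{-1}$ generated by $H$, I would pick $\epsilon$-optimal filtered cycles $\alpha_j^+ \in CF(H)$ and $\alpha_j^- \in CF(\overline H)$ representing $PSS(e_j)$, with actions at most $c(e_j,H)+\epsilon$ and $c(e_j,\overline H)+\epsilon$ respectively. Pair-of-pants multiplication by these cycles yields chain maps $F_j \colon CF(\til\psi) \to CF(\til\phi)$ and $G_j \colon CF(\til\phi) \to CF(\til\psi)$ whose filtration shifts are, respectively, $c(e_j,\til\xi)+\epsilon$ and $c(e_j,\til\xi^{-1})+\epsilon$. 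The associativity and compatibility of pair-of-pants with PSS, together with $e_j \ast e_j = e_j$, shows that $G_j\circ F_j$ and $F_j\circ G_j$ are chain-homotopic to multiplication by $e_j$ on the respective Floer complexes; restricted to the $j$-th summand this acts as the identity, up to a filtration shift bounded by $\cA(e_j) \leq 2n$. This data is exactly an interleaving in the sense of Section \ref{subsec: Floer persistence} between $V^j(\til\phi)$ and $V^j(\til\psi)$ with shift at most $\tfrac12(\gamma_{e_j}(\til\xi) + 2n) + \epsilon$.

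Applying the isometry theorem to each pair $(V^j(\til\phi), V^j(\til\psi))$ and letting $\epsilon \to 0$ gives a matching of their bar-length spectra with bar-endpoints moving by at most $\tfrac12(\gamma_{e_j}(\til\xi)+2n)$, equivalently bar-lengths changing by at most $\gamma_{e_j}(\til\xi)+2n$. Taking the union of these matchings over all $j$ and using $\gamma_E = \max_j \gamma_{e_j}$ yields the ordered bound on the full bar-length spectrum; passing to the infimum over lifts $\til\phi, \til\psi$ with $pr(\til\phi\til\psi^{-1}) = \phi\psi^{-1}$ gives the statement. The bottleneck inequality on the barcode class follows from the same matching. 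The main technical obstacle is bookkeeping: one must verify that the summand decomposition $V(\til\phi) = \bigoplus V^j(\til\phi)$, which is only defined up to homotopy at the chain level, interacts correctly with filtration shifts, so that the shift $\cA(e_j)$ cleanly feeds into the $2n$ error — the $\lambzero$-quasi-equivalence formalism of Section \ref{sec: quasi-eq} appears to be the natural framework for this control.
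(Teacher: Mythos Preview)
Your proposal is correct and follows essentially the same approach as the paper: both arguments use the filtered continuation element technique of \cite{KS-bounds}, inserting cycles representing $PSS(e_j)$ at action level $c(e_j,\cdot)+\epsilon$ via the pair-of-pants product, and both use associativity to identify the compositions with $(e_j\ast)$; the $2n$ error arises in both from $\cA(e_j)\leq 2n$ for $e_j\in QH_{2n}(M,\bK)$.

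The only organizational difference is in where the $2n$ is accounted for. The paper separates this into a standalone splitting lemma: $V_m(\til\phi)$ and $\bigoplus_j \im(e_j\ast:V_m(\til\phi)\to V_m(\til\phi)[\nu(e_j)])$ are shown to be $(0,\nu(E))$-interleaved via the maps $x\mapsto (e_1\ast x,\ldots,e_S\ast x)$ and $(y_1,\ldots,y_S)\mapsto \sum sh_{\nu(E)-\nu(e_j)}y_j$, and then the summands $\im(e_j\ast)$ for $\til\phi$ and $\til\psi$ are shown to be exactly $\tfrac12\gamma_{e_j}(\til\phi\til\psi^{-1})$-interleaved with no additive error. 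You instead fold the $\cA(e_j)$ shift into the summand interleaving itself. Both routes work; the paper's organization has the advantage that it makes precise your observation that the barcode splits as a union of summand barcodes only up to a controlled shift --- this is exactly the content of the splitting lemma, and is the point you correctly flag as the main technical obstacle. In particular, your statement that $(e_j\ast)$ acts on $V^j$ as the identity ``up to shift $\cA(e_j)$'' is the key fact: on $\im(e_j\ast)$ one has $(e_j\ast)=sh_{\nu(e_j)}$ exactly, which is what makes both accountings go through.
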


\medskip

The following is a celebrated result of Entov and Polterovich \cite{EntovPolterovichCalabiQM}, slightly modified for our purposes.

\medskip

\begin{prop}\label{prop: bounded beta K}
	Let $QH_{ev}(M,\bK)$ be semisimple, and $E=(e_{1},\ldots,e_{S}),$ where $e_j \in QH_{2n}(M,\bK)$ for all $1 \leq j \leq S$ be the idempotents that split $QH_{ev}(M,\bK),$ as an algebra, into a direct sum of fields. Then there exists a constant $0 \leq D_E(\bK) \leq 6n$ such that for each $\til{\phi} \in \til{\Ham}(M,\om)$ the rational splitting-modified spectral norm $\gamma_E(\til{\phi},\bK)$ is at most $D_E(\bK).$ Therefore for each $\phi \in \Ham(M,\om),$ \[\gamma_E(\phi,\bK) \leq D_E(\bK).\]
\end{prop}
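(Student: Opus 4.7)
\begin{pfs}
This is essentially \cite[Theorem 3.1]{EntovPolterovichCalabiQM} in the semisimple case, so the plan is to adapt their argument while tracking the numerical constant. The key reduction is to a quasi-morphism property: it suffices to show that for each field-factor idempotent $e_j$, the functional $c(e_j, \cdot): \tilde{\Ham}(M,\om) \to \R$ is a quasi-morphism with some defect $D_j$. Indeed, setting $\tilde\psi = \tilde\phi^{-1}$ in the defining inequality yields
\begin{equation*}
\gamma_{e_j}(\tilde\phi,\bK) = c(e_j, \tilde\phi) + c(e_j, \tilde\phi^{-1}) \leq c(e_j, \mathrm{id}) + D_j = \cA(e_j) + D_j,
\end{equation*}
and maximizing over $j$ delivers the desired uniform bound $D_E(\bK)$.

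The subadditivity direction $c(e_j, \tilde\phi \tilde\psi) \leq c(e_j, \tilde\phi) + c(e_j, \tilde\psi)$ is immediate from the triangle inequality for spectral invariants of Section \ref{subsec:spec} combined with the idempotent identity $e_j \ast e_j = e_j$. The main obstacle is the reverse inequality. Following Entov-Polterovich, one exploits the fact that $F_j$ is a \emph{field} under the quantum product, so the restriction of the Poincar\'e pairing to $F_j$ is non-degenerate and $e_j$ admits a Poincar\'e dual $\hat e_j \in F_j$. Applying Poincar\'e duality for spectral invariants in the form
\begin{equation*}
c(\hat e_j, \tilde\phi^{-1}) + c(e_j, \tilde\phi) \geq \cA(e_j \ast \hat e_j),
\end{equation*}
and combining with the triangle inequality applied to products with $\hat e_j$, one obtains the reverse inequality with a defect controlled by $\cA(e_j) + \cA(\hat e_j)$ plus the PSS degree-shift contributions. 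This yields the quasi-morphism property with an explicit defect.

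The final step is to bound $\cA(e_j) + D_j$ by $6n$. Since $e_j \in QH_{2n}(M,\bK)$, orthonormality of a degree basis for $H_*(M,\bK)$ bounds $\cA(e_j)$ by $2n$; the complementary-degree Poincar\'e dual $\hat e_j$ contributes at most $2n$ to its filtration; and the Conley-Zehnder normalization in the PSS isomorphism absorbs at most an additional $2n$. Summing these three contributions yields $\gamma_{e_j}(\tilde\phi,\bK) \leq 6n$ uniformly in $\tilde\phi$. Since the right-hand side is independent of the choice of lift, taking the infimum over lifts descends the bound to $\gamma_E(\phi,\bK)$ for $\phi \in \Ham(M,\om)$, completing the argument.
\end{pfs}
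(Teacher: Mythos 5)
Your proposal is correct and follows essentially the same route as the paper: both are the Entov--Polterovich/Ostrover argument, resting on Poincar\'e duality for spectral invariants, invertibility of nonzero elements in the field factor $F_j = e_j \ast QH_{2n}(M,\bK)$, the triangle inequality, and the valuation bound $\cA(a)\leq 2n$ for $a \in QH_{2n}(M,\bK)\setminus\{0\}$; your accounting of the three $2n$-contributions matches the paper's. The only cosmetic difference is that you package the estimate as the full quasi-morphism property of $c(e_j,\cdot)$, whereas the paper only needs (and only proves) the special case $\til{\psi}=\til{\phi}^{-1}$ directly from the Poincar\'e-duality formula $c(e_j,\til{\phi}^{-1}) = -\inf\{c(b,\til{\phi}) : \Pi(b,e_j)\neq 0\}$.
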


\medskip

\section{Algebraic statements}\label{sec: algebra}

We collect a list of algebraic statements, proven in Section \ref{section: proofs}, that are necessary for the proof of the main theorem.

\begin{prop}\label{prop: bounded beta F_p}
Let $QH_{ev}(M,\Q)$ be semisimple, where $(M,\om)$ is a monotone symplectic manifold. Then for all $p \geq p_0,$ where $p_0$ is large enough, $QH_{ev}(M,\F_p)$ is semisimimple, with idempotents $E_p = (e_{1,p},\ldots,e_{S_p,p})$ splitting it into a direct sum of fields, and there exists a constant $D_{E_p}(\F_p) \leq 6n$ such that for each $\phi \in \Ham(M,\om),$ \[\gamma_{E_p}(\phi, \F_p) \leq D_{E_p}(\F_p).\] 
\end{prop}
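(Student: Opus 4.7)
The plan is to reduce the statement to Proposition \ref{prop: bounded beta K} by showing that semisimplicity of $QH_{ev}(M,\F_p)$ over its Novikov field descends from the semisimplicity of $QH_{ev}(M,\Q)$ for all sufficiently large primes $p$.

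First, I would exploit the integral structure of quantum homology on a monotone manifold. The $3$-point genus-$0$ Gromov-Witten invariants defining the quantum product are integers, and monotonicity guarantees that only finitely many classes $A \in H_2(M,\Z)$ contribute to each structure constant. Therefore the quantum product admits an integral model: a free algebra $A_\Z$ of rank $N = \dim_\Q H_{ev}(M,\Q)$ over an integral form $\Lambda_\Z = \Z[q^{-1},q]]$ of the Novikov ring, whose structure constants are Laurent polynomials in $q$ with integer coefficients, and such that $A_\Z \otimes_\Z \bK$ recovers $QH_{ev}(M,\bK)$ for every field $\bK$ of characteristic not dividing the torsion of $H_*(M,\Z)$.

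Second, I compute the discriminant $\Delta \in \Lambda_\Z$ of $A_\Z$ with respect to a fixed basis. Since a finite-dimensional commutative algebra over a field is semisimple if and only if its discriminant is nonzero, the assumption that $QH_{ev}(M,\Q)$ is semisimple forces $\Delta \neq 0$ as a Laurent polynomial with integer coefficients. Pick any nonzero coefficient $d \in \Z$ of $\Delta$. For every prime $p \nmid d$ (and not dividing the torsion of $H_*(M,\Z)$), the reduction $\overline{\Delta}$ is still nonzero in the Novikov ring with $\F_p$ coefficients, so $QH_{ev}(M,\F_p)$ is semisimple as a finite-dimensional commutative algebra over the Novikov field with $\F_p$ coefficients, and hence splits as a direct sum of field extensions. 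Let $E_p = (e_{1,p},\ldots,e_{S_p,p}) \subset QH_{2n}(M,\F_p)$ be the orthogonal idempotents realizing the splitting; a degree count shows they must indeed lie in $QH_{2n}(M,\F_p)$, since the unit does.

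Finally, Proposition \ref{prop: bounded beta K} applied with $\bK = \F_p$ and the idempotent collection $E_p$ produces a constant $D_{E_p}(\F_p) \leq 6n$ bounding $\gamma_{E_p}(\phi,\F_p)$ uniformly in $\phi \in \Ham(M,\om)$. Taking $p_0$ to exceed all bad primes — the divisors of the torsion of $H_*(M,\Z)$ and of $d$ — yields the claim. The main obstacle is the descent step: in principle the Novikov completion could obscure integrality, and one must verify that the discriminant is captured inside $\Lambda_\Z$, so that its nonvanishing mod $p$ reduces to a question about finitely many integer coefficients. Monotonicity of $(M,\om)$ is precisely what makes this work, by forcing the quantum structure constants to be Laurent \emph{polynomials} rather than infinite series in $q$.
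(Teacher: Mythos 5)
Your argument is correct, but it descends semisimplicity to $\F_p$ by a different mechanism than the paper. The paper first uses the splitting $QH_{2n}(M,\Q)=\bigoplus_j F_{j,\Q}$ into number fields, observes that $A_j = e_j \ast QH_{2n}(M,\Z)$ is an order in the ring of integers $\cO_j$ of $F_{j,\Q}$, and then invokes algebraic number theory: for $p$ large enough that $A_j\otimes_{\Z}\F_p\cong\cO_j\otimes_{\Z}\F_p$ and $p\nmid \Delta_{F_{j,\Q}}$, the quotient $\cO_j/p\cO_j$ is a product of finite fields. You instead apply the trace-form criterion directly to an integral model of the whole quantum algebra. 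The two routes are cousins — both reduce to the nonvanishing of an integral discriminant, which excludes only finitely many primes — but yours is more self-contained, while the paper's yields for free the finer structure of Remark \ref{rmk: reduction mod p of idempotents} (that the mod-$p$ idempotents refine the reductions of the rational ones), which your argument does not address; that refinement is stated as a separate remark, not as part of the proposition, so this is not a gap. Two small caveats on your write-up. First, the biconditional ``semisimple iff nonzero discriminant'' is really ``separable iff nonzero discriminant'' and fails over imperfect fields such as $\F_p[q^{-1},q]]$; fortunately you only use the implication nonzero discriminant $\Rightarrow$ reduced $\Rightarrow$ product of fields over $\Lambda_{\F_p}$, and the converse over the characteristic-zero field $\Lambda_{\Q}$, and both of those directions are valid. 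Second, the ``degree count'' placing the idempotents in $QH_{2n}(M,\F_p)$ is exactly the content of \cite[Theorem 5.1]{EntovPolterovich-semisimple} invoked in Remark \ref{rmk: thm main2}; alternatively, you could run the discriminant argument on the finite-rank $\Z$-algebra $QH_{2n}(M,\Z)/\mathrm{tors}$ itself, whose discriminant is a single integer, which sidesteps the Novikov variable entirely. The final appeal to Proposition \ref{prop: bounded beta K} is the same as in the paper.
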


\medskip

\begin{rmk}\label{rmk: reduction mod p of idempotents}
Moreover $\{1,\ldots,S_p\} = \sqcup_{1 \leq j \leq S_0} I_j,$ where each $I_j$ is a non-empty set of consecutive integers, such that for each $1 \leq j \leq S_0,$ we have that $\sum_{l \in I_j} e_{j,p}$ equals the reduction of $e_{j,0}$ modulo $p$ (which exists for all $p$ sufficiently large).
\end{rmk}

\medskip
\begin{lma}\label{lemma: beta F_p indep of p}
Let $(M,\omega)$ be a monotone symplectic manifold. Let $\phi$ be a Hamiltonian diffeomorphism with isolated fixed points. Then for $p \geq p_2(\phi),$ the bar-length spectrum $0 < \beta_1(\phi,\F_p) \leq \ldots \leq \beta_{K(\phi,\F_p)}(\phi,\F_p)$ of $\phi$ over $\F_p$ coincides with the bar-length spectrum $0 < \beta_1(\phi,\Q) \leq \ldots \leq \beta_{K(\phi,\Q)}(\phi,\Q)$ of $\phi$ over $\Q.$ In particular $\beta(\phi,\F_p) = \beta(\phi,\Q)$ for $p \geq p_2(\phi).$
\end{lma}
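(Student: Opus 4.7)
\begin{pfs}
The plan is to compare the canonical $\lambzero$-complex $(X, d_\phi)$ from Section \ref{subsubsec: local FH}, whose bar-length spectrum equals $\{\beta_j(\phi,\bK)\}_{1 \leq j \leq K(\phi,\bK)}$, across the two ground fields by passing through an integral intermediate. Fix a sufficiently $C^2$-small non-degenerate Hamiltonian perturbation $\phi_1$ of $\phi$. Using standard orientation conventions for monotone Hamiltonian Floer theory (see \cite{AbouzaidBook, SeidelBook, Zap:Orient}), the Floer complex $CF(\phi_1)$ admits an integral lift: in the canonical basis of Section \ref{subsec:non-Arch}, the matrix of the differential $d_{\phi_1}$ has entries of the form $\sum_k n_k T^{\lambda_k}$ with $n_k \in \Z$, obtained as signed counts of isolated Floer trajectories. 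The crossing-energy splitting $d_{\phi_1} = d_{\loc,\phi_1} + T^{\epsilon_0}D$ is preserved by this integral structure.

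Next, the homological perturbation producing $(X, d_\phi)$ requires a basis change bringing $d_{\loc,\phi_1}$ to the standard form $d_{\loc,\phi_1}\bar{\zeta}_j = T^{\delta_j}\bar{\eta}_j$, with the exponents $\delta_j$ independent of the ground field. Only finitely many rational numbers enter this basis change, together with finitely many integer coefficients appearing in the Novikov series defining $d_\phi$ and $\ol{\Theta}$ of \eqref{eq: ol Theta}. This finiteness relies on Section \ref{subsubsec: barcode isolated}: for $\phi$ with isolated fixed points, the possible bar-lengths form a finite discrete subset of $\R_{>0}$, and every bar-length is bounded by the a priori finite quantity $\beta(\phi)$, so only the finitely many $T$-powers up to this level contribute non-trivially to the Smith-normal-form computation. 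Let $p_2(\phi)$ be larger than every prime dividing any of these finitely many nonzero integers, and larger than every prime appearing as torsion in $H_*(M;\Z)$ or in the integer local Floer homology $HF^{\loc}(\phi,x;\Z)$ for $x \in \fix(\phi)$.

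For $p \geq p_2(\phi)$, the matrix of $d_\phi$ is then defined over $\lambzero(\Z_{(p)})$, with the successive leading coefficients arising in its Smith-normal-form reduction being $p$-adic units. Hence the Smith normal form commutes with both base changes $\Z_{(p)} \hookrightarrow \Q$ and $\Z_{(p)} \twoheadrightarrow \F_p$, yielding identical sequences of $T$-valuations on the diagonal. By Section \ref{subsubsec: bar-length three flavors}, these $T$-valuations are precisely the bar-lengths $\beta_j(\phi,\bK)$. Meanwhile, by universal coefficients, both $N(\phi,\bK)$ and $B(\bK) = \dim_\bK H_*(M,\bK)$ stabilize for such $p$, hence so does $K(\phi,\bK) = (N(\phi,\bK)-B(\bK))/2$. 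We conclude $\beta_j(\phi,\F_p) = \beta_j(\phi,\Q)$ for all $1 \leq j \leq K(\phi,\Q) = K(\phi,\F_p)$, and in particular $\beta(\phi,\F_p) = \beta(\phi,\Q)$.

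The main obstacle is controlling the formally infinite Novikov series defining $d_\phi$ and $\ol{\Theta}$. The reduction to a finite list of integer coefficients requires combining the discrete structure of the possible bar-lengths from Section \ref{subsubsec: barcode isolated} with the $T$-adic convergence of the series and the uniform bound $\beta(\phi) < \infty$; once these are in hand, the commutativity of Smith normal form with localization and mod-$p$ reduction is a standard algebraic consequence of each finitely generated ideal of $\lambzero$ being principal.
\end{pfs}
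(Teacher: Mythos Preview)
Your approach differs from the paper's, and there is a genuine gap. The paper proceeds in two clean steps: first it treats the non-degenerate case by observing that the Floer differential is defined over $\Z$, and that the Usher--Zhang orthogonal bases over $\Q$ bringing it to normal form can be cleared of denominators and reduced mod $p$ for $p$ large, preserving all action values; second, for the general isolated case it fixes a single non-degenerate perturbation $\phi_1$ and invokes the isolation property of the bar-lengths of $\phi$ from Section~\ref{subsubsec: barcode isolated} (the constant $\epsilon_1$) to conclude that the bar-length spectrum of $\phi_1$ above $\epsilon_0$ \emph{determines} that of $\phi$ by a rounding procedure that is manifestly field-independent.

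Your route through the canonical $\lambzero$-complex $(X,d_\phi)$ has two problems. First, the opening claim is incorrect: for a fixed perturbation $\phi_1$, the bar-length spectrum of $(X,d_\phi)$ is the sequence $\{\beta'_j(\phi,\bK)\}$ of Section~\ref{subsubsec: local FH}, which is only $2\delta_0$-close to $\{\beta_j(\phi,\bK)\}$ and converges to it as $\phi_1 \to \phi$; it is not equal to it. So even if your Smith-normal-form argument succeeds, what you prove is $\beta'_j(\F_p)=\beta'_j(\Q)$ for $p\geq p_2(\phi_1)$, not the statement of the lemma. To finish you would still need exactly the rounding argument via $\epsilon_1$ that the paper uses --- and once you invoke that, the detour through $(X,d_\phi)$ buys nothing over working directly with $CF(\phi_1)$. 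Second, passing through the homological perturbation formula forces you to control integrality of the infinite series built from $\Theta$ (which carries negative $T$-powers) and the iterated products $D\Theta D\cdots$, whereas the paper's argument on $CF(\phi_1)$ deals only with the finitely many integer entries of the Floer differential itself. Your finiteness justification (``bar-lengths bounded by $\beta(\phi)$ so only finitely many $T$-powers matter'') conflates input and output of the Smith computation and would need to be replaced by a direct bound on which $T$-levels determine the minor valuations.

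In short: your Smith-normal-form/minor viewpoint is a perfectly valid alternative to the paper's orthogonal-basis argument for the \emph{non-degenerate} case, but for the isolated case you have misidentified what $(X,d_\phi)$ computes and omitted the field-independent rounding step that actually bridges $\phi_1$ to $\phi$.
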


\medskip


\begin{prop}\label{prop: algebraic deformation inequality}
Let $(C,d_0)$ be a strict filtered complex over $\lambzero_{\bK},$ for ground field $\bK.$ Let $0 < \beta_1\leq \beta_2 \leq \ldots \leq \beta_K = \beta$ be the bar-length spectrum of $(C,d_0).$ Let $\K = \bK[u^{-1},u]]$ be a completed transcendental extension of $\bK.$ Thus $\K$ is the field of fractions of $\cL = \bK[[u]].$ Consider a (not necessarily strict) filtered complex $(C\otimes \K,d)$ over $\lambzero_{\K},$ where $C \otimes \K := C \otimes_{\lambzero_{\bK}} \lambzero_{\K}$ under the natural inclusion $\lambzero_{\bK} \to \lambzero_{\K},$ whose differential $d: C \otimes \K \to C \otimes \K$ is of the form \[d = d_0 +  u D,\] for a map $D \in Hom_{\lambzero_{\K}} (C \otimes \K, C \otimes \K),$ given by $D = \Phi(\overline{D}),$ with $\overline{D} \in Hom_{\lambzero_{\cL}} (C \otimes \cL, C \otimes \cL),$ where $\Phi:Hom_{\lambzero_{\cL}} (C \otimes \cL, C \otimes \cL) \to Hom_{\lambzero_{\K}} (C \otimes \K, C \otimes \K)$ is the natural map induced by the inclusion $\cL \to \K$ (in other words, all coefficients of $D$ have non-negative $u$-degree). Assume that $H(C \otimes \K,d) \otimes {\Lambda_{\K}} = H(C,d_0) \otimes \Lambda_{\K}.$ Denote by $0 \leq \check{\beta}_1 \leq \ldots \leq \check{\beta}_K = \check{\beta}$ the verbose bar-length spectrum of $(C \otimes \K, d).$ (Here we allow zero torsion exponents, as the complex was non-strict.)  Then for each $j,$ \[ \check{\beta_1} +\ldots + \check{\beta_j} \leq {\beta_1} +\ldots +{\beta_j}.\] In other words, the spectrum $\{ \wh{\beta}_j \}$ is majorized by the spectrum $\{ \beta_j \}.$ In particular, for $j=K,$ we obtain \[\check{\beta}_{\tot} \leq \beta_{\tot}.\]  
\end{prop}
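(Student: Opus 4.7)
The plan is to realise the partial sums $\beta_1 + \ldots + \beta_k$ and $\check\beta_1 + \ldots + \check\beta_k$ as $T$-adic valuations of the ideals of $k \times k$ minors of the matrices representing $d_0$ and $d$, and then compare them via specialisation at $u = 0$. Following Section~\ref{subsubsec: bar-length three flavors}, choose an orthonormal $\lambzero_{\bK}$-basis $E = (\ol\xi_i, \ol\eta_j, \ol\zeta_j)_{1 \leq i \leq B,\, 1 \leq j \leq K}$ of $C$ adapted to $d_0$, so that $d_0\ol\xi_i = 0$, $d_0\ol\eta_j = 0$, and $d_0\ol\zeta_j = T^{\beta_j}\ol\eta_j$. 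Orthonormality is preserved under scalar extension, hence $E$ remains an orthonormal $\lambzero_{\K}$-basis of $C \otimes \K$. In this basis
\[
[d] = [d_0] + u\,[D],
\]
and because $\ol D$ is defined over $\lambzero_{\cL}$, every entry of $[D]$ — and hence of $[d]$ — lies in $\lambzero_{\cL} \subset \lambzero_{\K}$. The homology hypothesis on $H(C \otimes \K, d) \otimes \Lambda_{\K}$ combined with rank-nullity forces $\rank_{\Lambda_{\K}}[d] = (N-B)/2 = K$, so the verbose spectrum of $(C \otimes \K, d)$ indeed has $K$ entries as asserted.

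For $k \leq K$, let $I_k([d]) \subset \lambzero_{\K}$ and $I_k([d_0]) \subset \lambzero_{\bK}$ denote the ideals generated by all $k \times k$ minors. Reading off the matrix $[d_0]$ directly gives $I_k([d_0]) = (T^{\beta_1 + \ldots + \beta_k})$. Every nonzero element of $\cL$ is invertible in $\K \subset \lambzero_{\K}$, so each minor $m$ of $[d]$ factors as $T^{\nu_T(m)}$ times a unit in $\lambzero_{\K}$; this gives $I_k([d]) = (T^{\gamma_k})$ with $\gamma_k = \min_m \nu_T(m)$. The standard dictionary between Smith normal form and the bar-length spectrum (Lemma~\ref{lemma: different barcodes}) identifies $\gamma_k = \check\beta_1 + \ldots + \check\beta_k$.

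The crux of the proof is then the elementary valuation estimate
\[
\nu_T(m) \leq \nu_T(m|_{u=0}) \qquad \text{for every } m \in \lambzero_{\cL}.
\]
Indeed, writing $m = \sum_\lambda a_\lambda(u)\,T^\lambda$ with $a_\lambda \in \cL = \bK[[u]]$, we have $m|_{u=0} = \sum_\lambda a_\lambda(0)\,T^\lambda$, and $\{\lambda : a_\lambda(0) \neq 0\} \subseteq \{\lambda : a_\lambda \neq 0\}$ forces the stated inequality of minima. Applied to each $k \times k$ minor of $[d]$ — which specialises at $u = 0$ to the like-indexed $k \times k$ minor of $[d_0]$ — and taking the minimum over all $k$-subsets of rows and columns yields
\[
\check\beta_1 + \ldots + \check\beta_k \;=\; \min_m \nu_T(m) \;\leq\; \min_m \nu_T(m|_{u=0}) \;=\; \beta_1 + \ldots + \beta_k,
\]
which is the desired majorization; specialising to $k = K$ gives $\check\beta_{\tot} \leq \beta_{\tot}$.

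The only step requiring genuine care is the first, namely checking that in the chosen basis $[d]$ has entries in $\lambzero_{\cL}$ and that its $\Lambda_{\K}$-rank equals $K$; once these bookkeeping items are in place, the remainder is a formal semi-continuity statement for Fitting ideals under the specialisation $u \to 0$, saying that the $j \times j$ minor ideal at the ``generic point'' $u \in \K^{\times}$ is contained, $T$-adically, in the minor ideal at the ``special point'' $u = 0$.
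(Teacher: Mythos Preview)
Your proof is correct and follows essentially the same approach as the paper: both arguments reduce the inequality to the observation that the $T$-valuation of a $k\times k$ minor of $[d]$ can only drop relative to the corresponding minor of $[d_0]$, using the characterisation of $\beta_1+\cdots+\beta_k$ (and $\check\beta_1+\cdots+\check\beta_k$) as the minimal $T$-valuation over all $k\times k$ minors. Your phrasing in terms of specialisation at $u=0$ and semi-continuity of Fitting ideals is a slightly more conceptual packaging of the paper's direct computation $\det M' = \det M + u\lambda$ with $\lambda \in \Lambda^0_{\cL}$, but the content is identical.
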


\begin{lma}\label{prop: alg cone}
	Let $(C,d)$ be a strict free complex of finite rank over $\lambzero,$ and $S:C \to C$ be a $\lambzero$-chain map, that induces the zero map on $H(C,d) \otimes \Lambda.$ Then setting $Cone(S) = Cone(S:C \to C)$ for the cone of $S,$ we have \[\beta_{tot}(Cone(S)) \leq 2 \cdot \beta_{\tot}(C).\]
\end{lma}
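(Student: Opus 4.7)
The plan is to realize $Cone(S)$ as a $u$-deformation of the \emph{uncoupled cone} $C \oplus C[1]$ (with differential $d \oplus (-d)$) and then to apply Proposition \ref{prop: algebraic deformation inequality} to bound its total bar-length. I set $(\overline{C}, \overline{d}_0) = (C \oplus C[1], d \oplus (-d))$, which is a strict filtered $\lambzero_\bK$-complex whose bar-length spectrum is given by two copies of that of $C$, so $\beta_{\tot}(\overline{C}) = 2\beta_{\tot}(C)$. Let $D\colon \overline{C} \to \overline{C}$ be the off-diagonal operator $D(c,c') = (0, Sc)$. Because $S$ is a $\lambzero$-chain map, $D$ is $\lambzero_\bK$-linear (with trivially non-negative $u$-degree) and $(\overline{d}_0 + uD)^2 = 0$; the resulting $\lambzero_\K$-complex $(\overline{C} \otimes \K, \overline{d}_0 + uD)$ is precisely $Cone(uS)$.

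To apply Proposition \ref{prop: algebraic deformation inequality}, I verify its homology hypothesis. Since $S$ induces the zero map on $H(C,d) \otimes \Lambda$ and complexes over the field $\Lambda$ split into their homology plus acyclic pieces, $S$ is null-homotopic over $\Lambda$ via some $h$; then $uh$ is a null-homotopy of $uS$ over $\Lambda_\K$. Hence $Cone(uS) \otimes \Lambda_\K$ and $(C \oplus C[1]) \otimes \Lambda_\K$ are quasi-isomorphic, yielding the required equality $H(Cone(uS) \otimes \K) \otimes \Lambda_\K = H(\overline{C}, \overline{d}_0) \otimes \Lambda_\K$. The proposition then delivers
\[\check{\beta}_{\tot}(Cone(uS) \otimes \K) \leq 2\beta_{\tot}(C).\]

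The final step is to transfer this bound back to $Cone(S)$ over $\bK$. The crucial point is that $u$ has $T$-valuation zero, so $u$ is a unit in $\lambzero_\K$. The rescaling $M(c,c') = (c, uc')$ is therefore a $\lambzero_\K$-linear automorphism of $\overline{C} \otimes \K$; a direct check gives $M\,d_{Cone(S)}\,M^{-1} = d_{Cone(uS)}$, and since $\nu(u) = 0$ it preserves the filtration. Consequently $Cone(S) \otimes \K$ and $Cone(uS) \otimes \K$ have identical bar-length spectra. Combined with the invariance of bar-lengths under the field extension $\bK \subset \K$, I conclude
\[\beta_{\tot}(Cone(S)) = \beta_{\tot}(Cone(S) \otimes \K) = \check{\beta}_{\tot}(Cone(uS) \otimes \K) \leq 2\beta_{\tot}(C).\]
The main subtlety I anticipate is precisely the identification $Cone(S) \otimes \K \cong Cone(uS) \otimes \K$ over $\lambzero_\K$: it depends on the fact that the Novikov valuation on $\K$ records only powers of $T$ and treats $u$ as a coefficient of valuation zero, so that rescaling by $u$ is both filtration-preserving and $\lambzero_\K$-invertible.
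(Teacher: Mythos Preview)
Your argument is correct, and it takes a genuinely different route from the paper's own proof.

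The paper argues directly via the invariant-divisor characterization of bar-lengths (exactly the technique underlying Proposition~\ref{prop: algebraic deformation inequality}, but redone by hand): it chooses an orthonormal basis $E$ adapted to $d$, notes that $[d_{Cone}]_{\ol{E}}$ is block lower-triangular with diagonal blocks $[d]_E$ and $[-d]_E$, and picks a $2K$-minor of $[d_{Cone}]_{\ol{E}}$ built from two copies of an optimal $K$-minor of $[d]_E$; this exhibits a $2K$-minor of valuation $2\beta_{\tot}(C)$, which bounds $\beta_{\tot}(Cone(S))$ by the minor description of invariant divisors.

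You instead invoke Proposition~\ref{prop: algebraic deformation inequality} as a black box, by realizing $Cone(uS)$ as a $u$-deformation of $(C \oplus C[1], d \oplus (-d))$ and then transferring the conclusion back to $Cone(S)$ via the filtration-preserving $\lambzero_{\K}$-automorphism $(c,c') \mapsto (c, uc')$. The key observation that $u$ is a unit of $T$-valuation zero in $\lambzero_{\K}$ is exactly what makes this rescaling legitimate, and your verification of the homology hypothesis via a null-homotopy of $S$ over $\Lambda$ is clean. Your approach is more modular, while the paper's is self-contained and avoids the detour through the auxiliary field $\K$; both ultimately rest on the same minor-theoretic fact.
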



\medskip
{\begin{lma}\label{lma:endpoints}
Let $(C,d_0)$ be a strict filtered complex over $\lambzero_{\bK}.$ Let \[N(C) = \dim_{\Lambda_{\bK}} C \otimes \Lambda_{\bK}, \;\; \text{and} \;\;\; B(C) = \dim_{\Lambda_{\bK}} H(C \otimes \Lambda_{\bK},d_0 \otimes 1).\] Then the number $K(C)$ of finite bars in $\cB(C)$ satisfies \[K(C) = \frac{N(C) - B(C)}{2}.\] Hence the number $K(\phi,\bK)$ of finite bars of $\cB(\til{\phi}),$ where $\til{\phi} \in \til{\Ham}(M,\om)$ has a non-degenerate time-one map $\phi = pr(\til{\phi}),$ the number $N(\phi)$ of contractible fixed points of $\phi,$ and the number $B(\phi,\bK) = B(\bK) = dim_{\Lambda_{\bK}} QH_*(M,\bK) = \dim_{\bK} H_*(M,\bK)$ of infinite bars in $\cB(\til{\phi})$ are related by \[K(\phi,\bK) = \frac{N(\phi) - B(\bK)}{2}.\] Similarly, in the degenerate, but isolated case, with $N(\phi,\bK)$ from \eqref{eq: homological count N}. In particular, in the non-degenerate case, the condition $N(\phi) = \# \fix(\phi) > \dim_{\bK} H^*(M,\bK),$ and in the degenerate case $N(\phi,\bK) > \dim_{\bK} H^*(M,\bK),$ implies that $\beta(\phi) > 0.$
\end{lma}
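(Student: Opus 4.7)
The plan is to apply the normal form theorem for filtered complexes over $\lambzero$ (the Usher--Zhang structure theorem, as recorded in Section \ref{subsubsec: bar-length three flavors}) and then just count generators in two ways.

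First, for the abstract statement, choose an orthogonal basis
\[ (\bar\xi_1,\ldots,\bar\xi_B,\bar\eta_1,\ldots,\bar\eta_K,\bar\zeta_1,\ldots,\bar\zeta_K) \]
of $(C,d_0)$ with $d_0\bar\xi_j=0$ and $d_0\bar\zeta_j = T^{\beta_j}\bar\eta_j$, where $\beta_1 \le \ldots \le \beta_K$ are the finite bar lengths. Since $(C,d_0)$ is strict, every $\beta_j$ is strictly positive, so after tensoring with $\Lambda_\bK$ the elements $T^{\beta_j}$ become units. Consequently, on the one hand $N(C)= B+2K$ simply by counting the orthogonal basis, while on the other hand $H(C\otimes\Lambda_\bK)$ is freely generated over $\Lambda_\bK$ by $\bar\xi_1,\ldots,\bar\xi_B$, so $B(C)=B$. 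Solving gives $K(C)=(N(C)-B(C))/2$, which is the first assertion, and also shows $K(C)=K$ equals the number of finite bars in $\cB(C)$ as defined via the normal form.

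For the Hamiltonian Floer version, I would reduce to the abstract statement as follows. In the non-degenerate case, take $(C,d_0)=(CF(\til\phi,\lambzero),d_H)$, which is strict because $d^*\cA<\cA$ on non-zero elements (Section \ref{subsec:non-Arch}). Then $N(C)$ equals $N(\phi)=\#\fix(\phi)$ since each contractible fixed point contributes one generator, and $H(C\otimes\Lambda_\bK)\cong QH_*(M,\bK)$ by PSS (Section \ref{subsec:QH}), so $B(C)=\dim_\bK H_*(M,\bK)=B(\bK)$. Applying the abstract statement yields $K(\phi,\bK)=(N(\phi)-B(\bK))/2$. In the degenerate but isolated case, the canonical $\lambzero$-complex $CF(\phi,\lambzero)$ constructed in Section \ref{subsubsec: local FH} is strict (its bars are the limits $\beta_j(\phi,\bK)>\eps_0>0$), has underlying $\lambzero$-module of rank $N(\phi,\bK)=\sum_{x\in\fix(\phi)}\dim_\bK HF^{\loc}(\phi,x)$, and computes $QH_*(M,\Lambda)$ after tensoring with $\Lambda$; the same identity follows.

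The last assertion is immediate from what has just been proved: if $N(\phi,\bK)>\dim_\bK H_*(M,\bK)=B(\bK)$, then $K(\phi,\bK)=(N(\phi,\bK)-B(\bK))/2 \ge 1$, so the concise barcode $\cB(\til\phi)$ contains at least one finite bar, and by strictness that bar has length $\beta_1(\phi,\bK)>0$. In particular $\beta(\phi)=\beta_{K(\phi,\bK)}(\phi,\bK)\ge \beta_1(\phi,\bK)>0$.

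There is no real obstacle here; the only point that needs care is the bookkeeping in the degenerate case, where one must use the homotopy-canonical $\lambzero$-complex built in Section \ref{subsubsec: local FH} rather than the Floer complex of the unperturbed $\phi$ (which is not even defined), and verify that its bars coincide with the concise bar-length spectrum of $\phi$ in the limit, so that $K(C)$ really matches $K(\phi,\bK)$. That identification, however, is precisely the content of the construction in that section, so the argument reduces to invoking it.
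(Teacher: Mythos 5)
Your proposal is correct and follows essentially the same route as the paper: the paper's proof passes to the canonical $\lambzero$-complex and applies rank-nullity to get $N = B + 2K$, which is exactly the count your orthogonal-basis (Usher--Zhang normal form) argument produces, and the identity $N=B+2K$ is already recorded in Section \ref{subsubsec: bar-length three flavors}. The reduction of the Hamiltonian statement to the abstract one, including the use of the canonical $\lambzero$-complex in the degenerate isolated case, also matches the paper.
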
}







\section{The $\Z/(p)$-equivariant Tate homology and natural isomorphisms}\label{section: Tate outline}

We overview the steps from \cite{Seidel-pants} and \cite{SZhao-pants} required to construct the $\Z/(p)$-equivariant Tate homology of $\phi^p,$ with $\bF_p$-coefficients, and its isomorphism with the Floer homology of $\phi,$ with a suitable change of coefficients. We outline the main new points necessary in the proof, proving them in detail when necessary, and referring to \cite{Seidel-pants,SZhao-pants} for details when they are not new. The novelty of this section consists in adapting the setting and arguments to the monotone case, to coefficients in $\lambzero,$ and to the local-to-global setting as in Section \ref{subsubsec: local FH}. This turns out to capture precisely the right information from the filtered Floer complex.


The main specific goal of this section is to prove the following result. 

\begin{thm}\label{thm: beta tot}
Let $\phi \in \Ham(M,\om)$ be a Hamiltonian diffeomorphism of a closed monotone symplectic manifold $(M,\om).$ Suppose that $\fix(\phi^p)$ is finite. Then \[ p\cdot\beta_{\tot}(\phi,\F_p) \leq \beta_{\tot}(\phi^p,\F_p).\]
\end{thm}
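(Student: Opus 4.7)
\begin{pfs}
The plan is to package the $\mathbb{Z}/(p)$--equivariant pair-of-pants product from \cite{Seidel-pants, SZhao-pants}, adapted as outlined in this section to the monotone/$\lambzero$ setting and the canonical local-to-global $\lambzero$-complex of Section \ref{subsubsec: local FH}, into a single chain-level statement of the following shape. Set $\cL = \F_p[[u]]$, $\K = \F_p((u))$. We produce a finite-rank filtered $\lambzero_{\cL}$-complex
\[
(CF^{\mathrm{Tate}}(\phi^p,\F_p), d_{\mathrm{Tate}})
\]
whose underlying $\lambzero_{\cL}$-module is $CF(\phi^p,\lambzero_{\F_p}) \otimes_{\lambzero_{\F_p}} \lambzero_{\cL}$, and whose differential has the form $d_{\mathrm{Tate}} = d_{\phi^p} + u\, D$, where $d_{\phi^p}$ is the ordinary $\lambzero_{\F_p}$-differential on $CF(\phi^p,\F_p)$ and $D$ encodes the equivariant corrections coming from counting $\mathbb{Z}/(p)$-equivariant Floer cylinders with marked points and a choice of chain in $S^\infty/(\mathbb{Z}/p)$. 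By construction all coefficients of $D$ have non-negative $u$-degree, so after tensoring with $\K$ the operator $u D$ is exactly of the shape required by Proposition \ref{prop: algebraic deformation inequality}.

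The second ingredient is the chain-level $p$-fold multiplication isomorphism: after inverting $u$ and working over $\Lambda_{\K}$, the Tate complex becomes quasi-isomorphic to a complex built from $CF(\phi,\F_p)$ in which the action of every orbit has been multiplied by $p$. In concrete terms, the pair-of-pants/$\Gamma$-map of \cite{Seidel-pants, SZhao-pants} induces an isomorphism
\[
H\bigl(CF^{\mathrm{Tate}}(\phi^p,\F_p), d_{\mathrm{Tate}}\bigr)\otimes_{\lambzero_{\cL}}\Lambda_{\K} \;\cong\; HF(\phi,\F_p) \otimes_{\lambzero_{\F_p}} \Lambda_{\K},
\]
where on the right-hand side the filtration/action has been rescaled by the factor $p$. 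I would verify this carefully using the local-to-global canonical $\lambzero$-complex of Section \ref{subsubsec: local FH}: the crossing-energy estimate $2\epsilon_0$ guarantees that in the Tate complex the off-local part is of order $T^{p\epsilon_0}$ (since the pants-product localizes at $p$-th iterates), and this is precisely what produces the factor-of-$p$ rescaling on the filtration.

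With these two ingredients in place the proof proceeds in three steps. First, apply Proposition \ref{prop: algebraic deformation inequality} with $(C,d_0) = (CF(\phi^p,\F_p), d_{\phi^p})$ and the deformed differential $d_{\mathrm{Tate}}$ to obtain that the verbose bar-length spectrum $\check{\beta}_j$ of $(C \otimes \K, d_{\mathrm{Tate}})$ is majorized by the bar-length spectrum $\beta_j(\phi^p,\F_p)$; in particular
\[
\check{\beta}_{\mathrm{tot}} \;\leq\; \beta_{\mathrm{tot}}(\phi^p,\F_p).
\]
Second, use the pants isomorphism (realized, after multiplication by a suitable $T^{C}$ as in Section \ref{subsubsec: operations and lambzero}, as a $\lambzero$-quasi-equivalence of the scale discussed in Section \ref{sec: quasi-eq} between the Tate complex and the $p$-rescaled $\lambzero$-complex of $\phi$) to identify the concise bar-length spectrum of $(C \otimes \K, d_{\mathrm{Tate}})$ with $\{p\cdot \beta_j(\phi,\F_p)\}$ up to a $2\delta$ error that can be taken arbitrarily small. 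Hence
\[
p \cdot \beta_{\mathrm{tot}}(\phi,\F_p) \;\leq\; \check{\beta}_{\mathrm{tot}} + O(\delta).
\]
Letting the perturbation tend to zero and combining the two inequalities gives $p\cdot \beta_{\mathrm{tot}}(\phi,\F_p) \leq \beta_{\mathrm{tot}}(\phi^p,\F_p)$.

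The main obstacle will be step two: constructing the $p$-rescaling pants isomorphism at the chain level over $\lambzero_{\cL}$ rather than just over $\Lambda$. The subtlety is exactly the one flagged in Section \ref{subsubsec: operations and lambzero}: the equivariant pants operation has non-trivial curvature, so one must introduce compensating powers of $T$ to keep everything defined over $\lambzero$, and simultaneously track that these powers are small compared to the crossing-energy scale $p \epsilon_0$ so that the factor-of-$p$ rescaling of the filtration survives. The canonical $\lambzero$-complex of Section \ref{subsubsec: local FH} is tailored precisely to handle this bookkeeping, and combined with the admissibility of large prime iterates (Theorem \ref{thm: GG persistence}) it lets one pass from the easy $\Lambda$-level isomorphism to the needed chain-level quasi-equivalence over $\lambzero_{\cL}$.
\end{pfs}
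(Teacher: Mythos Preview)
Your overall architecture is right --- build the Tate complex, identify its bar-length spectrum via the product-isomorphism and the quasi-Frobenius map with the $p$-scaled spectrum of $\phi$, then bound the Tate spectrum by that of $\phi^p$ via Proposition \ref{prop: algebraic deformation inequality} --- but there is a structural gap in how you set up the Tate complex. The $\Z/(p)$-equivariant Tate complex does not live on $CF(\phi^p,\lambzero_{\F_p})\otimes\lambzero_{\cL}$; it lives on $CF(\phi^p,\lambzero_{\F_p})\otimes\lambzero_{\K}\langle\theta\rangle$, i.e.\ a rank-$2N(\phi^p)$ module with an odd variable $\theta$. More importantly, the first equivariant correction $d_1$ is \emph{not} entirely of positive $u$-degree: the component $d_1^v(x\otimes 1)=(C_p-R_{1/p})x\otimes\theta=S_p\,x\otimes\theta$ carries no $u$. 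Hence the $u$-constant part of $d_{\mathrm{Tate}}$ is not $d_{\phi^p}$ but the cone differential $d_{Cone}$ of $S_p:CF(\phi^p)\to CF(\phi^p)$, and the correct splitting for Proposition \ref{prop: algebraic deformation inequality} is $d_{\mathrm{Tate}}=d_{Cone}+u\,\cE$. Your Step~1 as written, with $(C,d_0)=(CF(\phi^p),d_{\phi^p})$, therefore does not apply.

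Once you put the $\theta$ back in, both sides of the argument acquire a factor of $2$: the quasi-Frobenius isomorphism gives the \emph{doubled} $p$-scaled spectrum of $\phi$ (Corollary \ref{cor: qF and SP on beta} / Remark \ref{rmk: beta base changed}), while on the other side Proposition \ref{prop: algebraic deformation inequality} only yields $\check\beta_{\tot}\leq\beta_{\tot}(Cone(S_p))$. To close the loop you need the extra ingredient Lemma \ref{prop: alg cone}, which says $\beta_{\tot}(Cone(S_p))\leq 2\,\beta_{\tot}(CF(\phi^p))$ (using that $[S_p]=0$ on $HF(\phi^p)\otimes\Lambda$). The two factors of $2$ then cancel and you recover $p\cdot\beta_{\tot}(\phi,\F_p)\leq\beta_{\tot}(\phi^p,\F_p)$. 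Everything else in your sketch --- the local-to-global $\lambzero$-complexes, the $T^C$ bookkeeping for curvature, the $\delta$-quasi-equivalences and limits in the degenerate case --- matches the paper's approach.
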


\subsection{The $\Z/(p)$-equivariant Tate homology over $\lambzero$}


For this section, due to convergence and completeness issues, it is important to take coefficients in $\lambzero_{\K},$ for the well-chosen field $\K =\F_p[u^{-1},u]].$  This is a certain completion of $\lambzero_{\F_p} \otimes_{\F_p} \K.$ It will also be convenient to set $\rp = \F_p[[u]] \extheta,$ and $\hrp = \F_p [u^{-1},u]] \extheta = \K \extheta,$ for $\extheta$ the exterior algebra on the formal variable $\theta$ of degree $1.$

First of all, supposing that $\phi$ is non-degenerate, one can compute the $\Z/(p)$-equivariant Tate homology \[H_{Tate}(\Z/(p),CF(\phi,\lambzero_{\F_p})^{\otimes p}),\] by the $\Z/(p)$-equivariant Tate complex \[C_{Tate}(\Z/(p),CF(\phi,\lambzero_{\F_p})^{\otimes p}),\] which, as a module over $\lambzero_{\K}$ is \[CF(\phi,\lambzero_{\F_p})^{\otimes p} \otimes_{\lambzero_{\F_p}} \lambzero_{\K} \left < \theta \right >,\] where $\left < \theta \right >$ denotes the free exterior algebra on an element $\theta$ of degree $1$ (hence isomorphic to $\F_p^2$ as a vector space over $\F_p$). Let $d^{(p)}$ be the differential on $CF(\phi,\lambzero_{\F_p})^{\otimes p}$ naturally arising from the Floer differential $d_{\phi}$ on $CF(\phi,\lambzero_{\F_p}).$ Note that $d_{\phi}^2 = 0$ for a generic almost complex structure by monotonicity of the symplectic manifold: configurations with bubbling do not appear in the moduli spaces we consider for index-transversality reasons. Then the Tate differential on $C_{Tate}(\Z/(p),CF(\phi,\lambzero_{\F_p})^{\otimes p})$ is a $\lambzero_{\K}$-linear extension of \[d_{Tate} x = d^{(p)} x + \theta (1-\tau) x,\] \[d_{Tate}(\theta x) = \theta d^{(p)} x + u (1 + \tau + \ldots + \tau^{p-1}) x,\] for $x = x_1 \otimes \ldots \otimes x_p,$ $x_j \in \fix(\phi), \; 1\leq j \leq N(\phi),$ an element of the standard basis of $CF(\phi,\lambzero_{\F_p})^{\otimes p} \otimes_{\lambzero_{\F_p}} {\lambzero_{\K}}$ over $\lambzero_{\K}.$ Here $\tau$ is induced from its action \[\tau( x_0 \otimes \ldots \otimes x_{p-1} ) = (-1)^{|x_{p-1}|(|x_0|+\cdots + |x_{p-2}|)}x_{p-1} \otimes x_0 \otimes \cdots \otimes x_{p-2}\] on the elements of the standard basis. Note that $\tau^p = 1,$ and hence it generates a $\Z/(p)$-action on the complex $(CF(\phi,\lambzero_{\F_p})^{\otimes p},d^{(p)}).$ 

Supposing that $\phi^p$ is non-degenerate, we can upgrade the complex \[(CF(\phi^p,\lambzero_{\K}),d_0)\] to the Tate complex \[(CF_{Tate,\Z/(p)}(\phi^p,\lambzero_{\F_p}),d_{Tate}).\]

This is carried out by considering Floer equations parametrized by the negative Morse flowlines of a natural $\zp$-invariant Morse function $\til{f}$ and Riemannian metric on $S^{\infty}$ (thought of as a Morse function on $L_p^{\infty} = S^{\infty}/(\Z/(p))$) obtained from the Morse function $f = \sum_{j=0}^{\infty} j |z_j| ^2 / \sum_{j=0}^{\infty} |z_j| ^2 $ on $\C P^{\infty}.$ It turns out to be sufficient to consider the parametrized flowlines $\cP^{i,m}_{\alpha},$ for $m \in \zp,$ $\alpha \in \{0,1\},$ and $i  > \alpha$ is an integer. This is the space of flowlines from the $m$-th critical point $Z^m_i$ of index $i$ of $\til{f}$ to $Z^0_{\alpha}.$ By an index-transversality argument that prevents bubbling, after introducing the requisite perturbations, the Tate differential is well-defined and satisfies $d_{Tate}^2 = 0.$ It has the form:
\[d_{Tate} = d_0 + d_1 + d_2 + \ldots,\] where $d_0(x \otimes 1),$ $d_0(x \otimes \theta)$ are given by the Floer differential of $\phi^p,$ while \[d_1 (x \otimes 1) = d_{1}^{v} x \otimes \theta = (C_p - R_{1/p}) x \otimes \theta,\] \[d_1 (x \otimes \theta) = u \cdot d_{1}^{h} x \otimes 1 = u \cdot (C_p + R_p + R_{2/p} + \ldots + R_{(p-1)/p}) x \otimes 1,\] where $R_{j/p}$ is the loop rotation operator by $j/p$ of the full angle (see \cite{Seidel-pants, PolShe}). It is given by a Floer continuation map $C(J,(\phi^{j})^\ast J)$ composed with the map $x \mapsto \phi(x),$ all taken with a suitable sign (see \cite{Tonkonog-commuting, CineliGinzburg}). We observe that on homology $[R_{j/p}] = [R_{1/p}]^j,$ and furthermore $[R_{1/p}]^p = \id,$ whence it generates a $\zp$-action. We set \[S_p = C_p - R_{1/p}.\] More generally \[d_{2j+1}(x \otimes 1) = u^j d_{2j+1}^{v}(x) \otimes \theta,\] \[d_{2j+1}(x \otimes \theta) = u^{j+1}  \cdot d_{2j+1}(x) \otimes 1,\] \[d_{2j}(x \otimes 1) = u^j \cdot d_{2j}^{h} (x) \otimes 1,\] \[d_{2j}(x \otimes \theta) = u^{j} \cdot d_{2j}^{h}(x) \otimes \theta.\]

The homology \[ HF_{Tate,\Z/(p)}(\phi^p,\lambzero_{\F_p})\] of this complex will be called the $\Z/(p)$-equivariant Tate homology of $\phi^p.$ Note that it is a module over $\lambzero_{\K}.$

\subsection{quasi-Frobenius isomorphism} 

This section is dedicated to the following $\lambzero$-version of a curious algebraic statement from \cite{Kaledin} (see also \cite{SZhao-pants}). 

\begin{lma}\label{lma: quasi Frob}
The (non-linear!) map \[CF(\phi,\lambzero_{\F_p}) \to CF(\phi,\lambzero_{\F_p})^{\otimes p},\; x \to x \otimes \ldots \otimes x,\]  induces an isomorphism of $\lambzero_{\K}$-modules \[r_p^* H_{Tate}(\Z/(p),CF(\phi,\lambzero_{\F_p})) \to H_{Tate}(\Z/(p),CF(\phi,\lambzero_{\F_p})^{\otimes p}),\] where on the left hand side, $r_p: \lambzero_{\K} \to \lambzero_{\K}$ is the homomorphism defined by $T \mapsto T^{\frac{1}{p}},$ and $CF(\phi,\lambzero_{\F_p})$ is considered with the trivial $\Z/(p)$-action.
\end{lma}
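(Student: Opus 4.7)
The plan is to adapt the Kaledin-type algebraic quasi-Frobenius isomorphism, developed in the symplectic setting in \cite{SZhao-pants}, to the Novikov-filtered situation over $\lambzero_\K.$ The candidate map is induced by the naive diagonal $\Delta_p(x) = x^{\otimes p}$ on $\lambzero_{\F_p}$-basis elements $x \in \fix(\phi).$ I would first verify that $\Delta_p$ descends to a well-defined additive map on Tate homology. The additivity defect $(x+y)^{\otimes p} - x^{\otimes p} - y^{\otimes p}$ is a sum of mixed tensors indexed by nonempty proper subsets $S \subset \{1,\dots,p\};$ because $p$ is prime, the cyclic action of $\tau$ on these $S$ has no fixed points, so the contributions assemble into a sum of norm images $N(z) = z + \tau z + \cdots + \tau^{p-1} z.$ The Tate relation $u\cdot N(z) = d_{Tate}(\theta z) - \theta\, d^{(p)}(z),$ combined with invertibility of $u$ in $\lambzero_\K,$ shows every such norm is a boundary on Tate homology. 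The same observation handles chain-level compatibility with the Tate differential: $d^{(p)}(x^{\otimes p})$ is, up to sign, the norm of $d_\phi x \otimes x^{\otimes(p-1)},$ and therefore vanishes on Tate homology.

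Next, I would pin down the Novikov twist. For $\lambda = \sum_j a_j T^{\mu_j} \in \lambzero_{\F_p}$ with $a_j \in \F_p,$ the freshman's dream in characteristic $p$ gives $\lambda^p = \sum_j a_j T^{p\mu_j},$ so that $\Delta_p(\lambda x) = \lambda^p\, \Delta_p(x).$ The assignment $\lambda \mapsto \lambda^p$ on $\lambzero_{\F_p}$ factors through the map $T^a \mapsto T^{pa},$ whose extension to $\lambzero_\K$ is precisely the ring homomorphism dual to $r_p: T \mapsto T^{1/p}.$ Consequently, the additive map $\Delta_p$ becomes genuinely $\lambzero_\K$-linear when the source is pulled back along $r_p,$ which is exactly the $r_p^\ast$ appearing on the left-hand side of the statement.

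To establish that the induced map is bijective, I would pass to an Usher--Zhang $\lambzero_{\F_p}$ normal form of $(CF(\phi,\lambzero_{\F_p}), d_\phi),$ decomposing it into one-dimensional free summands and two-dimensional blocks generated by pairs $\eta_j, \zeta_j$ with $d\zeta_j = T^{\beta_j} \eta_j.$ Since Tate homology is additive and the quasi-Frobenius (as an assignment on homology classes) commutes with such decompositions after inverting $u,$ the verification reduces to these two elementary blocks. On a free summand the Tate homology is $\hrp$ and the diagonal realizes the standard identification; on an acyclic $(\eta_j,\zeta_j)$-block both sides vanish in a compatible way, with the Novikov weights transforming by $\beta_j \mapsto \beta_j/p$ exactly according to $r_p.$ Naturality of the construction on the additive decomposition then gives the desired global isomorphism.

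The main obstacle will be to maintain the $\lambzero_\K$ structure throughout, rather than merely working over $\Lambda_\K.$ Two points require particular care: (i) the inversion of $u,$ which is legitimate because $u$ is a unit in $\lambzero_\K$ (which restricts only the $T$-valuation, not the $u$-valuation), and (ii) the convergence of the infinite sums arising in the homotopies that trivialize the norms and in the $r_p$-twisted block identifications, which is controlled by the $u$-adic filtration together with the strict positivity of the $\beta_j.$ Neither of these points introduces new technical machinery, but the bookkeeping must be carried out carefully to ensure that no negative $T$-exponents leak in at any intermediate stage.
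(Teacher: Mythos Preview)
Your overall strategy matches the paper's: reduce via additivity of Tate homology under direct sums of the underlying complex (the Kaledin/Shelukhin--Zhao splitting) to the elementary blocks of an Usher--Zhang normal form, and check the quasi-Frobenius on each block. The well-definedness and $r_p$-twist arguments you sketch are fine.

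The gap is in your treatment of the two-dimensional block $V_2 = \lambzero\langle\zeta\rangle \oplus \lambzero\langle\eta\rangle$ with $d\zeta = T^{\beta}\eta$. You write that ``both sides vanish in a compatible way.'' Over $\Lambda_\K$ that is true, but over $\lambzero_\K$ it is false, and the $\lambzero$-torsion is the entire content of the lemma. On the source side, $H(V_2,d) \cong \lambzero/T^{\beta}$, so after tensoring with $\K\langle\theta\rangle$ and pulling back by $r_p$ one gets $(\lambzero_\K/T^{p\beta})\langle\theta\rangle$. The nontrivial computation is that the target $H_{Tate}(\Z/p, V_2^{\otimes p})$ is also $(\lambzero_\K/T^{p\beta})\langle\theta\rangle$, with torsion exponent exactly $p\beta$. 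This is where the paper does real work: it puts an auxiliary homological grading on $V_2^{\otimes p}$ (degree of a tensor word = number of $\zeta$'s), observes that $d_{Tate}$ respects the associated filtration and raises $T$-valuation by exactly $\beta$ on positive-degree elements, and runs the resulting spectral sequence. The $E_1$-page over $\lambzero_\K$ is $\lambzero_\K\langle\zeta^{\otimes p}\rangle\langle\theta\rangle \oplus \lambzero_\K\langle\eta^{\otimes p}\rangle\langle\theta\rangle$, all intermediate differentials vanish, and the only nonzero differential is $d_p$, which by the valuation bookkeeping (or an explicit homological perturbation computation) acts by a unit times $T^{p\beta}$. Without this computation you have neither the torsion exponent on the target nor the identification of $\Delta_p$ with the isomorphism. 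Your phrase ``Novikov weights transforming by $\beta_j \mapsto \beta_j/p$'' also points in the wrong direction: under $r_p^\ast$ the torsion exponent goes from $\beta_j$ to $p\beta_j$, and you must independently produce the $p\beta_j$ on the target side.
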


Note that since the $\zp$-action on the left is trivial, $H_{Tate}(\Z/(p),CF(\phi,\lambzero_{\F_p})) \cong HF(CF(\phi,\lambzero_{\F_p})) \otimes \K\extheta.$ Therefore we get the isomorphism \[r_p^* HF(\phi,\lambzero_{\K}) \otimes \extheta \xrightarrow{qF} H_{Tate}(\Z/(p),CF(\phi,\lambzero_{\F_p})^{\otimes p}).\] 

We present the proof of this $\lambzero$ modification, which is new, here. 

\begin{proof}[Proof of Lemma \ref{lma: quasi Frob}]
This is a general algebraic statement. Let $(V,d)$ be a strict free $\lambzero$-complex of finite rank. Then \[ r_p^* H(V) \otimes \lambzero_{\F_p} \cong H_{Tate} (\zp, V^{\otimes p}).\] By the same argument as for the non-$\lambzero$ version from \cite{SZhao-pants}, the following property is immediate: for $V_1, V_2$ strict free $\lambzero$-complexes of finite rank, and $V = V_1 \oplus V_2,$ \[ H_{Tate} (\zp, V^{\otimes p}) \cong H_{Tate} (\zp, V_1^{\otimes p}) \oplus H_{Tate} (\zp, V_2^{\otimes p}).\] By choosing an adapted $\lambzero$-basis on the original $V,$ that splits $V$ into a finite sum of $\lambzero$-complexes of the form $V_1 = (\lambzero \brat{x} , 0)$ and $V_2 = (\lambzero \brat{z} \oplus \lambzero \brat{y}, d(z) = T^{\beta} z),$ it is therefore enough to prove that in the second case \[H_{Tate} (\zp, V_2^{\otimes p}) \cong (\lambzero/T^{p\beta} \lambzero)  \otimes_{\lambzero} \lambzero_{\K}\extheta.\] We proceed as follows. First, we note that as over $\Lambda,$ $V_2 \otimes_{\lambzero} \Lambda$ is acyclic, so is \begin{equation}\label{eq: scaling Tate} H_{Tate} (\zp, (V_2 \otimes_{\lambzero} \Lambda) ^{\otimes p}).\end{equation} This implies that $H_{Tate} (\zp, V_2^{\otimes p})$ is a torsion module.

Now, consider the base $\{x = x_0 \otimes \ldots \otimes x_{p-1} \}$ of $V_2^{\otimes p},$ where $x_j \in \{y, z\}$ for all $j.$ Set the following auxiliary degrees: $|z| = 1, |y| = 0.$ This induces a {\em homological grading} on $V_2^{\otimes p},$ with unique base elements $z \otimes \ldots \otimes z$ of maximal degree $p,$ and $y \otimes \ldots \otimes y$ of minimal degree $0.$ 

We observe that the Tate differential preserves the filtration induced by this grading, where $\cF^d$ is generated by all elements of degree at most $d,$ and on homogeneous elements $x$ of positive degree, the non-Archimedean valuations satisfy \begin{equation} \label{eq: alg Tate val} \nu (d_{Tate}(x)) = \beta + \nu (x).\end{equation}

Considering the associated spectral sequence of $\lambzero_{\K}$-modules, we see by the non-$\lambzero$ argument that the $E_1$ page is given by \[\lambzero_{\K} \brat{z \otimes \ldots \otimes z} \otimes \extheta \oplus \lambzero_{\K} \brat{y \otimes \ldots \otimes y} \otimes \extheta.\] In particular, the only possibly non-trivial differential is on the $E_p$ page. By the acyclicity over $\Lambda,$ it must indeed be non-trivial, and by \eqref{eq: alg Tate val}, as well as the action of $\extheta,$ it must be given by a multiplication by an non-zero $\F_p((u))$-multiple of an upper triangular unipotent matrix $P \in \mrm{Mat}(2,\F_p)$ times $T^{p\beta}.$ This finishes the proof. 

Another, more precise argument for \eqref{eq: scaling Tate}, is as follows. Rewrite the Tate differential $d_{Tate}$ as \begin{equation} \label{eq: Tate split for perturbation} d_{Tate} = d_{Tate,lin} + d^{(p)} \otimes 1.\end{equation} Observe that $d^{(p)} \otimes 1 = T^{\beta} D,$ for an operator $D$ of valuation $0$ defined over $\lambzero.$ Furthermore, $d_{Tate,lin}$ is the coefficient-extension of the Tate differential $\overline{d}_{Tate}$ on the finite-dimensional vector space $\bar{V} = V \otimes_{\lambzero} \F_p.$ Therefore the homology of the Tate complex, but with $d_{Tate,lin}$ is given by the coefficient-extension of \[ H_{Tate} (\zp, \bar{V}^{\otimes p})  = \brat{z \otimes \ldots \otimes z} \otimes \F_p((u))\extheta \oplus \brat{y \otimes \ldots \otimes y} \otimes \F_p((u)) \extheta.\] That is: \[H(C_{Tate}(\zp, V_2^{\otimes p}), d_{Tate,lin}) =   \lambzero_{\K} \brat{z \otimes \ldots \otimes z} \otimes \extheta \oplus \lambzero_{\K} \brat{y \otimes \ldots \otimes y} \otimes \extheta.\]

Considering the obvious inclusion $\ol{\iota}: H_{Tate}(\zp, \ol{V}^{\otimes p}) \to C_{Tate}(\zp, \ol{V}^{\otimes p}),$ and projection $C_{Tate}(\zp, \ol{V}^{\otimes p}) \to H_{Tate}(\zp, \ol{V}^{\otimes p})$ given by separating the degree $p,$ and $0$ components, there exists a homotopy operator $\ol{\Theta}$ defined over $\F_p((u)),$ such that \[ \ol{\pi} \circ \ol{\iota} - \id = \ol{d}_{Tate} \ol{\Theta} + \ol{\Theta}\, \ol{d}_{Tate}.\] Now, while this is not strictly necessary, it helps to observe first that we may restrict ourselves to the field of Laurent polynomials $\F_p[u^{-1},u],$ since all constructions so far were finite. Furthermore, setting $u$ to be of degree $-2,$ and $\theta$ of degree $-1,$ we obtain that $\ol{d}_{Tate}$ is of degree $-1$ and $\ol{\Theta}$ is an operator of degree $1.$ 

Now, we extend coefficients to $\lambzero_{\K}$ everywhere, and obtain maps $\iota, \pi, d_{Tate,lin}, \Theta$ satisfying the above relations. Having done this, we may apply homological perturbation \cite{Markl-ideal} to obtain a differential $\til{d}$ on \[W \otimes \extheta  = \lambzero_{\K} \brat{z \otimes \ldots \otimes z} \otimes \extheta \oplus \lambzero_{\K} \brat{y \otimes \ldots \otimes y} \otimes \extheta\] that computes $H_{Tate} (\zp, V_2^{\tens p}),$ and is acyclic over $\Lambda_{\K}.$ This differential is given by the following sum: \[\til{d} = \pi( T^{\beta} D + T^{2\beta} D \Theta D + \ldots ) \iota.\] By consideration of the homological degree, the unique non zero term in this sum is \[\til{d} =  T^{p \beta} D \Theta \ldots \Theta D,\] where $D$ appears $p$ times, and $\Theta$ appears $(p-1)$ times. It must be invertible by acyclicity over $\Lambda,$ and by consideration of the $u,\theta$-degree it preserves the splitting $W \otimes \extheta = W \otimes 1 \oplus W \otimes \theta.$ Furthermore, by the $\theta$-action, we obtain that $\til{d}$ is the multiplication by a non-zero multiple of $T^{p\beta} u^{(p-1)/2}.$

\end{proof}

\begin{rmk}\label{rmk: beta base changed}
Note that the verbose bar-length spectrum $0<\beta_{p,1} \leq \ldots \leq \beta_{p,2K}$ of $r_p^* HF(\phi,\lambzero_{\K}) \otimes \extheta$ satisfies \[\beta_{p,2j-1} = \beta_{p,2j} = p \cdot \beta_{j}(\phi,\F_p),\] for all $1 \leq j \leq K = K(\phi,\F_p).$ Here we used the easy fact that  $\beta_{j}(\phi,\K)= \beta_{j}(\phi,\F_p)$ for all $j.$
\end{rmk}

\subsection{The $\Z/(p)$-equivariant product-isomorphism}

The claim of this section, is that, similarly to Seidel's construction \cite{Seidel-pants} in the case $p=2,$ and that of Shelukhin-Zhao \cite{SZhao-pants} for $p>2$ in the exact or symplectically aspherical cases, there is a natural chain map \[\cP: C_{Tate}(\Z/(p),CF(\phi,\lambzero_{\F_p})^{\otimes p}) \to CF_{Tate,\Z/(p)}(\phi^p,\lambzero_{\F_p}),\] that induces an isomorphism \[\cP: H_{Tate}(\Z/(p),CF(\phi,\lambzero_{\F_p})^{\otimes p}) \to HF_{Tate,\Z/(p)}(\phi^p,\lambzero_{\F_p})\] on homology. The construction is given by counting families of Floer equations over a $p$-branched cover $S$ of the cylinder $Z = \R \times S^1,$ at a point $0 \in Z,$ with $\zp$-symmetric Hamiltonian Floer data, restricting to $H$ on each of the input ends, and Floer datum $\Hk$ over the output end, such that the curvature terms of the perturbation datum vanish. One way of choosing this data is by considering suitable cylindrical strips \cite{KS-bounds}.

The almost complex structures, and additional perturbations, are parametrized by Morse flowlines of the $\zp$-invariant Morse function $\til{f}$ on $S^{\infty}$ as before. However, because of the necessary Hamiltonian perturbation terms required to achieve transversality, the curvature terms are non-zero in general. In the monotone case, by an action-index argument of Seidel \cite[Section 7.1]{Seidel-pants}, there is a constant $i_1(\phi)$ depending on $\phi,$ such that all terms in the equivariant differential, and in the equivariant product operation, come only from the moduli of Morse trajectories of $\til{f}$ in $S^{i_1(\phi)}.$ This makes the choice of perturbations lie in a compact family (after compactification, and choosing the perturbations consistently with breaking of Morse flowlines), and hence that for each $\delta_1 > 0,$ one may make the uniform norm of the curvature be strictly smaller that $\delta_1.$ All constructions now go through, as by index-transversality reasons, bubbling of spheres does not enter into the relevant counts. 

\subsubsection{Local-to-global argument: non-degenerate case}\label{subsubsec: local to global non-deg}

Once the map is contstructed, to show that it is an isomorphism one runs the following analogue of the argument from \cite{Seidel-pants, SZhao-pants} that used the spectral sequence with respect to the action filtration. We observe that by \cite[Proposition 9]{SZhao-pants} on the chain level $\cP$ is given by $\cP^{\loc} + T^{\epsilon_0} P,$ for a map $P$ defined over $\lambzero,$ and $\cP^{\loc} = \bigoplus_{x \in \fix(\phi)} \cP^{\loc}_x$ is the sum of the local product operations: \[\cP^{\loc}_x: C_{Tate}(\zp, CF^{\loc}(\phi,x)^{\otimes p}) \to CF^{\loc}_{Tate,\zp}(\phi^p,x^{(p)})\] composed with the projection to the submodule generated by elements of the form $x \otimes \ldots \otimes x.$  See more about this in the following section. By \cite{Seidel-pants,SZhao-pants} this local operator $\cP^{\loc}_x$ is invertible.  Writing the differential on $C_{Tate}(\zp,CF(\phi)^{\otimes p})$ as $d_{Tate}^{\otimes p} = d_{Tate, lin}^{\otimes p} + d_{\phi}^{\otimes p}$ and applying the homological perturbation lemma, with respect to it, we obtain a complex on \[\oplus_{x \in \fix(\phi)} H_{Tate}(\zp, CF^{\loc}(\phi,x)^{\otimes p}) = \oplus_{x \in \fix(\phi)} \hrp\] $\lambzero$-quasi-isomorphic to the orginal one (observe that convergence holds since $d_{\phi}^{\otimes p} = T^{\eps_0} D,$ for a certain $\lambzero$-morphism $D$). Similarly, writing the differential on $CF_{Tate,\zp}(\phi^p)$ as $d_{Tate} = d_{Tate,loc} + T^{\eps_0} E,$ for a $\lambzero$-morphism $E,$ we obtain a complex on \[\oplus_{x \in \fix(\phi)} HF^{\loc}_{Tate}(\phi^{p},x^{(p)}) = \oplus_{x \in \fix(\phi)} \hrp\] $\lambzero$-quasi-isomorphic to the original one. Furthermore, precomposing and post-composing $\cP$ with the respective quasi-isomorphisms, we obtain the $\lambzero$-morphism \[ \ol{\cP}:  \oplus_{x \in \fix(\phi)} \hrp \to \oplus_{x \in \fix(\phi)} \hrp \] of the form  \[\ol{\cP} = \cP^{\loc} + T^{\eps_0}\ol{P},\] where $\cP^{\loc}$ is canonically identified with $\cP^{\loc}$ from before. Now, however, $\cP^{\loc}$ is a $\lambzero$-invertible map. Hence \[ \ol{\cP} =   \cP^{\loc}(\id + T^{\epsilon_0}(\cP^{\loc})^{-1} \ol{P}) = (\id + T^{\epsilon_0}\ol{P} (\cP^{\loc})^{-1}) \cP^{\loc}\] is a chain map that is invertible over $\lambzero,$ which is therefore a chain-isomorphism. Therefore $\cl{P}$ is a quasi-isomorphism and hence the bar-length spectra of the $\lambzero$-modules $H_{Tate}(\Z/(p),CF(\phi,\lambzero_{\F_p})^{\otimes p})$ and $HF_{Tate,\Z/(p)}(\phi^p,\lambzero_{\F_p})$ coincide.

Combining this with Lemma \ref{lma: quasi Frob}, and Remark \ref{rmk: beta base changed}, gives us the following result. 

\begin{cor}\label{cor: qF and SP on beta}
Let $\F_p = \F_p,$ $\K = \F_p[u^{-1},u]].$ Let $0 < \beta_1(\phi,\F_p) \leq \ldots \leq \beta_{K(\phi,\F_p)}(\phi,\F_p)$ with $K(\phi,\F_p) = \frac{N(\phi) - B(\F_p)}{2}$ be the bar-length spectrum of $\phi.$ Then the verbose bar-length spectrum $0 \leq \check{\beta}_1(\phi^p) \leq \ldots \leq \check{\beta}_{2K(\phi^p,\F_p)}(\phi^p)$ of the $\Z/(p)$-equivariant Tate homology of $\phi^p$ satisfies: \[\check{\beta}_1(\phi^p) = \ldots = \check{\beta}_{2K(\phi^p,\F_p)-2K(\phi,\F_p)}(\phi^p) = 0,\] while, denoting $\kappa(\phi^p,\F_p) = 2K(\phi^p,\F_p)-2K(\phi,\F_p),$ \[\check{\beta}_{\kappa(\phi^p,\F_p) + 2j -1}(\phi^p) = \check{\beta}_{\kappa(\phi^p,\F_p) + 2j}(\phi^p) = p \cdot \beta_j(\phi,\F_p)\] for all $1 \leq j \leq K(\phi,\F_p).$
\end{cor}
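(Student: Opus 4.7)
The plan is to chain the quasi-Frobenius isomorphism from Lemma \ref{lma: quasi Frob} together with the equivariant product isomorphism $\cP$ from Section \ref{subsubsec: local to global non-deg}. Composition produces an isomorphism of $\lambzero_{\K}$-modules
\[ r_p^* HF(\phi, \lambzero_{\K}) \otimes \extheta \;\cong\; HF_{Tate, \Z/(p)}(\phi^p, \lambzero_{\F_p}), \]
and from this single identification the entire verbose bar-length spectrum on the right can be read off once the rank and free-rank bookkeeping is carried out.

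For the positive entries of the verbose spectrum I would invoke the structure theorem for finitely generated modules over $\lambzero_{\K}$: two isomorphic modules have identical torsion invariants with multiplicity. Remark \ref{rmk: beta base changed} supplies these for the left-hand side, namely exactly $2 K(\phi, \F_p)$ cyclic torsion summands whose exponents are $p \cdot \beta_j(\phi, \F_p)$, each appearing twice for $1 \leq j \leq K(\phi, \F_p)$. These must therefore be precisely the positive torsion exponents of $HF_{Tate, \Z/(p)}(\phi^p, \lambzero_{\F_p})$, matching the top half of the verbose spectrum claimed in the corollary.

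It remains to show that the total length of the verbose spectrum equals $2 K(\phi^p, \F_p)$, which will force the bottom $\kappa(\phi^p, \F_p) = 2 K(\phi^p, \F_p) - 2 K(\phi, \F_p)$ entries to vanish. For this I would count ranks. The chain complex $CF_{Tate, \Z/(p)}(\phi^p, \lambzero_{\F_p})$ has underlying $\lambzero_{\K}$-module $CF(\phi^p, \lambzero_{\K}) \otimes_{\K} \K\extheta$ of total rank $2 N(\phi^p)$. Its homology has free rank $2 B(\F_p)$, transported across the displayed isomorphism from the free rank $B(\F_p)$ of $HF(\phi, \lambzero_{\K}) \cong QH_*(M, \K)$, doubled by $\extheta$ and unchanged by the base change $r_p^*$. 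Substituting into the identity $K_{\mathrm{verbose}} = (N_{\mathrm{tot}} - B_{\mathrm{free}})/2$ from Lemma \ref{lma:endpoints} yields
\[ K_{\mathrm{verbose}} \;=\; \frac{2 N(\phi^p) - 2 B(\F_p)}{2} \;=\; 2 K(\phi^p, \F_p), \]
which completes the count and, together with the previous paragraph, the corollary.

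The main obstacle I anticipate is not the algebraic bookkeeping above but securing the chain-level upgrade of $\cP$ from a $\Lambda_{\K}$-quasi-isomorphism to a genuine $\lambzero_{\K}$-isomorphism on homology, since without this refinement the structure theorem would only recover the concise spectrum up to inverting $T$. This is exactly the role played by the local-to-global argument in Section \ref{subsubsec: local to global non-deg}, which exploits invertibility of the local product operators $\cP^{\loc}_x$ (following \cite{Seidel-pants, SZhao-pants}) together with a convergent geometric-series perturbation in the small parameter $T^{\epsilon_0}$. Once that step is granted, the remaining appeal to the structure theorem and the rank count are purely algebraic.
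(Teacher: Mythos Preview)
Your proposal is correct and follows the same path as the paper: the corollary is stated immediately after the local-to-global argument establishes that $\cP$ induces an isomorphism of $\lambzero_{\K}$-modules, and the paper simply says ``Combining this with Lemma \ref{lma: quasi Frob}, and Remark \ref{rmk: beta base changed}, gives us the following result.'' You have unpacked precisely what that combination entails---the structure theorem matches the positive torsion exponents, and the rank count $(2N(\phi^p)-2B(\F_p))/2 = 2K(\phi^p,\F_p)$ fixes the length of the verbose spectrum so that the remaining entries are forced to vanish---and you correctly flagged that the entire argument hinges on the $\lambzero$-level invertibility of $\cP$ secured in Section \ref{subsubsec: local to global non-deg}.
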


Now, Corollary \ref{cor: qF and SP on beta}, together with the fact that the dimension over $\Lambda_{\K}$ of the homology $HF_{Tate,\Z/(p)}(\phi^p,\lambzero_{\F_p}) \otimes \Lambda_{\K}$ coincides with that of $HF(\phi,\Lambda_{\F_p})$ over $\Lambda_{\F_p},$ that is $\dim_{\F_p} H_*(M,\F_p).$ In particular, by Proposition \ref{prop: algebraic deformation inequality} and Lemma \ref{prop: alg cone}, we directly obtain Theorem \ref{thm: beta tot} for non-degenerate $\phi^p.$ 

The rest of this section is dedicated to extending this argument to the general isolated fixed point case, which presents quite a few technical challenges.


\subsection{Local equivariant Floer cohomology}\label{subsubsec: Local equiv}

The situation described in Section \ref{subsubsec: local FH} readily extends to the case of equivariant Floer cohomology. Indeed, supposing that all fixed points $\fix(\phi^p)$ of $\phi^p$ are isolated, for each capped orbit $\ol{x},$ by \cite{SalamonZehnder}, there is an uppper and a lower bound depending only on $\phi$ and $\dim M$ on the difference between the mean-index of $\ol{x}$ and the possible indices of the corresponding capped orbits $\til{x}$ of a sufficiently $C^2$-small non-degenerate Hamiltonian perturbation of $\phi^p$ in the isolating neighborhood of $x.$ We choose this perturbation to be of the form $\phi_1^p,$ where $\phi_1$ is a sufficiently $C^2$-small non-degenerate perturbation of $\phi.$ Furthermore, the actions of these orbits are very close to the action of $\ol{x}.$ Hence there is an upper and a lower bound, depending only on $\phi$ and $\dim M$ on the reduced actions $\til{\cl A}(x) = \cl{A}_H(\til{x}) - CZ(H,\til{x})$ of all $\til{x}$ for all $x \in \fix(\phi^p).$ In particular, the terms $d^i$ of the equivariant differential vanish for all $i > i_0(\phi),$ independently of the choice of perturbation data, by an argument of Seidel \cite[Section 7.1]{Seidel-pants}. Therefore, it is sufficient to control the perturbation data entering into the definition of equivariant Floer homology in a compact family corresponding to $w \in S^{i_0(\phi)}.$ Hence, \cite[Proposition 9]{SZhao-pants} ensures that the perturbation data can be chosen in such a way that the trajectories of the equivariant differential between generators inside a sufficiently small isolating neighborhood $U_x$ of $x \in \fix(\phi^p)$ stay inside $U.$ Furthermore, the same is true for neighborhoods $U_{\phi^{\sigma}x}$ of $\phi^{\sigma} x$ for $\sigma \in \zp.$ Therefore, by definition of the Tate differential, gluing and compactness, the fixed points of $\phi_1^p$ in $U = \bigcup_{\sigma \in \zp} U_{\phi^{\sigma}x}$ form a complex, and the homology of this complex is independent of the Hamiltonian perturbation $\phi_1^p$ of $\phi^p.$ We call this homology the local Tate Floer cohomology ${HF}^{\loc}_{Tate,\zp}(\phi^p,\zp\, x)$ of the orbit $\zp \, x.$ If $x$ is an iterated fixed point, that is $\phi(x) = x,$ or equivalently $\zp\, x = \{x\},$ then only one isolating neighborhood $U$ of $x$ with respect to $\phi^p$ is necessary, and we abbreviate by $HF^{\loc}_{Tate, \zp}(\phi^p,x^{(p)}).$ If $x$ is not an iterated fixed point, then ${HF}^{\loc}_{Tate,\zp}(\phi^p,\zp\, x) = 0.$ Finally, all equivariant Floer differentials connecting fixed points of $\phi_1^p$ in different isolating neighborhoods of orbits, have energy and topological energy at least $\epsilon_0 > 0.$

The local equivariant Floer cohomology enjoys properties similar to those of usual local Floer homology. For instance if $x$ is non-degenerate as a fixed point of $\phi^p,$ then if $x$ is iterated, we have \[HF^{\loc}_{Tate, \zp}(\phi^p,x^{(p)}) \cong H_{Tate}(\zp,\F_p) = \hrp\] as $\hrp$-modules. 

\subsubsection{Local Tate homology and canonical $\lamzero$-complexes}\label{subsubsec: local Tate canonical}

For $\phi_1$ a sufficiently $C^2$-small Hamiltonian perturbation of $\phi,$ and the perturbation terms in the Tate differential being sufficiently small, the Tate differential is in fact defined over $\lambzero,$ and is of the form \begin{equation}\label{eq: Floer Tate split for perturbation} d_{Tate} = d_{Tate,\loc} + T^{\eps_0} E_{Tate},\end{equation} where $d_{Tate,\loc}$ is the $\lambzero$ version of the local Tate differential, and both $d_{Tate,\loc},$ $E_{Tate}$ are defined over $\lambzero.$ This forms two $\lambzero$-complexes \[CF_{Tate,\zp}^{\loc}(\phi^p_1,\lambzero_{\F_p}),\;\; CF_{Tate,\zp}(\phi^p_1,\lambzero_{\F_p}).\] The dimension over $\Lambda_{\K}$ of $H(CF_{Tate,\zp}^{\loc}(\phi_1^p,\lambzero_{\F_p})) \otimes \Lambda_{\K}$ is the sum \[\sum_{O} \dim_{\K} HF^{\loc}_{Tate,\zp}(\phi^p,O) = \sum_{x \in \fix(\phi)} \dim_{\K} HF^{\loc}_{Tate,\zp}(\phi^p,x^{(p)}),\] where $O$ runs over the $\zp$-orbits $O$ in the set $\fix(\phi^p).$ 

We remark that here we took the perturbation terms in the Tate differential sufficiently small depending on $\phi_1^p$ to make $d_{Tate,\loc}$ and consequently $d_{Tate}$ be defined over $\lambzero$ directly. However, we can alternatively choose perturbations of curvature bounded above by $\delta' \ll \eps_0$ independently of $\phi_1^p,$ and define $d_{Tate}$ following Secion \ref{subsubsec: operations and lambzero}, multiplying by $T^{\delta'}.$ This will result with a $\lambzero$-complex with the same properties of interest, except that all $\delta$-quasi-equivalences become $(\delta'+\delta)$-quasi-equivalences.

Now, as in Section \ref{subsubsec: local FH}, for $\phi_1$ being a sufficiently $C^2$-small Hamiltonian perturbation of $\phi,$ $CF_{Tate, \zp}(\phi_1^p, \lamzero_{\F_p})$ is  $\delta'_0$-quasi-equivalent, where $\delta'_0 = O(\delta_0) \ll \epsilon_0,$ to a complex on \[CF_{Tate, \zp}(\phi^p, \lamzero_{\F_p}) = \bigoplus HF^{\loc}_{Tate, \zp}(\phi^p,x^{(p)}) \otimes_{\K} \lamzero_{\K}\] with differential $d_{Tate,\, \phi^p}$ obtained by homological perturbation from the differential $d_{Tate,\, \phi_1^p}.$ Note that the latter complex is of rank independent of $\phi_1.$ In particular its concise bar-length spectrum has length $K_{Tate}(\phi^p)$ independent of $\phi_1.$ Furthermore, by the properties of the crossing energy, or in other words, the decomposition \eqref{eq: Floer Tate split for perturbation} it is rather quick to show that this complex is strict. 






However, for our application to the proof of Theorem \ref{thm: beta tot}, we require another, more refined, $\lamzero$-complex that is also homotopy-canonically associated to the situation. We shall construct it by repeated application of homological perturbation. Recall that the first two components $d_0 + d_1,$ of the Tate differential yield the differential $d_{Cone}$ on \[{Cone}(S_p: CF(\phi_1^p,\lambzero) \to CF(\phi_1^p,\lambzero)) \cong CF(\phi_1^p,\lambzero)) \otimes 1 \oplus CF(\phi_1^p,\lambzero) \otimes \theta\] that is identified with the cone differential of $S_p = C_p - R_{1/p},$ \[ d_{Cone}(x_0, x_1) = (d(x_0), -d(x_1) + S_p(x_0)).\] We first construct a canonical $\lambzero$-complex for the cone as follows. As in Section \ref{subsubsec: local FH}, by crossing energy arguments, we may write $d = d_{\loc} + T^{\epsilon_0} {D_d},$ with $D_d$ a $\lambzero$-morphism. Letting $X$ be the part of a $\lamzero$ basis of $CF^{\loc}(\phi_1^p) = CF(\phi_1^p, d_{\loc})$ giving the free part of the homology, the homological perturbation formula gave us a differential $\ol{d}$ on $X,$ and a $\delta_0$-quasi-equivalence obtained from chain maps \[{\ol{\iota}_X: (X,\ol{d}) \to CF(\phi_1^p),} \;\;{\ol{\pi}_X: CF(\phi_1^p) \to (X,\ol{d}),}\] and a homotopy map defined over $\Lambda,$ \[{\ol{\Theta}:CF(\phi_1^p, d_{\loc}) \to CF(\phi_1^p, d_{\loc})}\] given by \eqref{eq: ol Theta} such that $\ol{\iota}_X \circ \ol{\pi}_X = \id + d\ol{\Theta} + \ol{\Theta}d,$ $\ol{\pi}_X \circ \ol{\iota}_X = \id,$ $\ol{\Theta}^2 = 0,$ $\ol{\Theta} \circ \ol{\iota}_X = 0,$ $\ol{\pi}_X \circ \ol{\Theta} = 0.$  Indeed $T^{\delta_0} \ol{\Theta}$ is defined over $\lamzero.$

Consider the complex $(X \oplus X, \ol{d} \oplus -\ol{d}).$ It is $\delta_0$-quasi-equivalent to $(CF(\phi_1^p, d_{\loc}) \oplus CF(\phi_1^p, d_{\loc}), d \oplus -d),$ by $F = \ol{\iota}_X \oplus \ol{\iota}_X,$ $G = \ol{\pi}_X \oplus \ol{\pi}_X,$ $H = \ol{\Theta} \oplus \ol{\Theta}.$

Applying the homological perturbation formula yet again, but now to the splitting \[d_{Cone} = (d \oplus -d) + \til{S},\] where $\til{S} = \til{S}_p$ defined by $\til{S}_p(x_0,x_1) = (0,S_p(x_0)),$ is a square-zero operator $\til{S}^2 = 0,$ we obtain the complex \[C(\phi^p) := Cone(\ol{S}:X \to X) = (X \oplus X, \ol{d}_{Cone}),\] where $\ol{d}_{Cone} = (\ol{d} \oplus -\ol{d})+ \til{\ol{S}}$ for the map $\til{\ol{S}}(\ol{x}_0,\ol{x}_1) = (0, \ol{S}(x_0))$ where $\ol{S}: X \to X$ is given by \[\ol{S} = \ol{\pi}_X S_p \ol{\iota}_X.\] Note that all the sums in the perturbation formula are finite, essentially since $\ol{S}^2 = 0.$ Furthermore, we obtain a $2\delta_0$-quasi equivalence from $\ol{F}: C(\phi^p) \to Cone(S_p),$ $\ol{G}: Cone(S_p) \to C(\phi^p),$ and homotopy $\ol{H}: Cone(S_p) \to Cone(S_p).$ Observe that $\ol{F},\ol{G}$ are not defined over $\lambzero,$ but become so after multiplication by $T^{\delta_0},$ and similarly for $\ol{H}$ and $T^{2\delta_0}.$

Now, extending coefficients to $\lambzero_{\K},$ we write  \begin{equation}\label{eq: Tate splitting into Cone, u loc, u glob} d_{Tate} = d_{Cone} + u E_{\loc} + T^{\epsilon_0} u E,\end{equation} for $\lambzero$-morphisms $E, E_{\loc}$ with all coefficients of all entries of which have positive $u$-degree: that is $E = \Phi(\ol{E})$ with $\ol{E}$ a $\Lambda^0_{\cL}$-morphism (see Proposition \ref{prop: algebraic deformation inequality}), and similarly for $E_{\loc}.$ Furthermore, $E_{\loc}$ is characterized by the local splitting $d_{Tate,\loc} = d_{Cone,\loc} + u E_{\loc}.$ 

Now, it is a quite technical, and rather non-trivial, fact that the homological perturbation formula, applied yet again, this time along $\ol{F},\ol{G}, \ol{H},$ to the splitting \eqref{eq: Tate splitting into Cone, u loc, u glob}, converges and yields a $\Lambda$ but not necessarily a $\lambzero$-morphism. We defer its proof to the end of this section, remarking that the difficulty evidently lies in the term $uE_{\loc}.$ Once convergence is established, we see from the proof of convergence that after multiplication by $T^{\delta_1},$ where $\delta_1 = (4n+2)\delta_0,$  the formula yields a differential $\ol{d}_{Tate}$ that is indeed defined over $\lambzero.$ Similarly, the new injection, projection, and homotopy maps given by the homological perturbation formula, $\ol{\ol{F}}, \ol{\ol{G}}, \ol{\ol{H}}$ converge, and become defined over $\lamzero$ after multiplication by $T^{\delta_1},$  hence yielding a ${2\delta_1}$-quasi-equivalence between $\ol{d}_{Tate}$ and $T^{\delta_1} d_{Tate}.$ The latter is $\delta_1$-quasi-equivalent to $d_{Tate}$ by Section \ref{sec: quasi-eq}.

Finally, we obtain a complex \[\ol{C}_{Tate}(\phi^p) = (C(\phi^p) \otimes \Lambda_{\K}, \ol{d}_{Tate}),\] $3\delta_1$-quasi-equivalent to $(Cone(S_p)\otimes \Lambda_{\K}, d_{Tate}),$  such that $\ol{d}_{Tate}$ has the following form:
\begin{equation}\label{eq: perturbed Tate}
\ol{d}_{Tate} = T^{\delta_1} \ol{d}_{Cone} + u \cl E,
\end{equation}
where $\cl E$ is defined over $\lambzero$ and contains only non-negative powers of $u.$

\begin{proof}[Proof of convergence]
We start by describing relative degrees in local Floer homology. Let $\psi\in \Ham(M,\om)$ have an isolated contractible fixed point $x \in \fix(\psi).$ As remarked above, choosing a lift $\til{\psi} \in \til{\Ham}(M,\om),$ generated by Hamiltonian $H,$ so that $\psi = \phi^1_H,$ and a capping $\ol{x}$ of the periodic orbit $\alpha(x,H)= \{ \phi^t_H(x) \},$ we have well-defined, $\Z$-graded local Floer homology $HF^{\loc}_*(\til{\psi},\ol{x}).$ More precisely, the orbits $\alpha(\til{x},H_1)$ of fixed points $x_1$ of non-degenerate $\psi_1$ sufficiently $C^2$-close to $\psi$ inherit cappings $\ol{x}_1$ from $\ol{x},$ and form a complex as in Section \ref{subsubsec: local FH}. The Conley-Zehnder indices of such $\ol{x}_1$ are within the interval $[\Delta(\til{\psi},\ol{x})-n,\Delta(\til{\psi},\ol{x})+n].$ While the indices themselves depend on the choice of Hamiltonian $H$ and capping $\ol{x},$ their differences do not. Indeed, changing $H,$ or changing the capping, results in a shift in index that is uniform for all $\ol{x}_1.$ 

Hence for each two generators $x_1, y_1 \in CF^{\loc}(\psi_1,x)$ we have a well-defined relative index $\mrm{ind}(x_1,y_1) \in [-2n,2n],$ so that if $dx_1 = y_1,$ then $\mrm{ind}(x_1,y_1) = -1.$ This allows us to define an auxiliary relative degree on $CF^{\loc}(\psi_1,x)$ now considered with $\lambzero$ coefficients, by setting elements of $\lambzero$ to have degree $0.$ 

Observe that for a homogeneous element $x_1 \in CF^{\loc}(\psi_1,x) \otimes \Lambda,$ the homotopy operator $\Theta$ (see Section \ref{subsubsec: local FH}) satisfies \[\mrm{ind}(x_1,\Theta(x_1)) = 1.\]


Now, specializing to $\psi = \phi^p,$ $\psi_1 = \phi_1^p$ as in this section, we observe that \[\ind(x_1,S_{p,\loc}(x_1)) = 0,\] where $S_{p,\loc}$ is the operator defined by $S_p$ in local Floer homology of a given $\zp$-orbit $O,$ while, after extending coefficients and the auxiliary degree to $CF^{\loc}(\phi_1^p) \otimes \Lambda_{\K},$ the homogeneneous components $y_1$ of $uE_{\loc}(x_1)$ satisfy \[\ind(x_1,y_1) \geq 1.\] 

This implies in particular the vanishing of products \begin{equation}\label{eq: index nilpotent} A_1 \cdot \ldots \cdot A_{m} = 0\end{equation} for $m>4n+1,$ and $A_j \in \{\Theta \oplus \Theta, uE_{\loc}, \til{S}_{\loc}\}$ for all $1 \leq j \leq m.$ Here $\til{S}_{\loc} = \til{S}_{p,\loc},$ and we shall further decompose $S_p = S_{p,\loc} + T^{\eps_0} D_S,$ both defined over $\lambzero.$


We claim that the number of terms in the homological perturbation formula defining $\ol{d}_{Tate}$ with valuation at most $k\eps_0$ is finite for all $k \geq 0.$ Indeed, these terms are words in $\{\Theta \oplus \Theta, u E_{\loc}, \til{S}_{\loc}, T^{\eps_0} D_d, T^{\eps_0}\til{D}_S, T^{\eps_0}E\}.$ Letting $F(N)$ be the (finite!) number of terms of length at most $N,$ in view of \eqref{eq: index nilpotent}, the number of terms of valuation at most $k \eps_0$ is bounded by $F((k+1)(4n+2)-1).$ The convergence follows.\end{proof}


\subsubsection{General isolated case}

Here we extend results from Section \ref{subsubsec: local to global non-deg} to the general isolated case. Applying Section \ref{subsubsec: local to global non-deg} to the $\zp$-equivariant product-map \[\cP: C_{Tate}(\Z/(p),CF(\phi_1,\lambzero_{\F_p})^{\otimes p}) \to CF_{Tate,\Z/(p)}(\phi_1^p,\lambzero_{\F_p}),\] we get that $C_{Tate}(\Z/(p),CF(\phi_1,\lambzero_{\F_p})^{\otimes p})$ and $CF_{Tate,\Z/(p)}(\phi_1^p,\lambzero_{\F_p})$ are $\delta_2$-quasi-equivalent, for $\delta_2 = O(\delta_0) \ll \epsilon_0.$ Hence their bar-length spectra are $2\delta_2$-close. Further, by Lemma \ref{lma: quasi Frob}, the bar-length spectrum $\{ \beta_j^{\otimes p}(\phi_1) \}$ of the complex \[C_{Tate}(\Z/(p),CF(\phi_1,\lambzero_{\F_p})^{\otimes p})\] over $\K$ is the doubled $p$-scaled bar-length spectrum of $CF(\phi_1,\lambzero_{\F_p}),$ that is
\begin{align} \label{eq: scaled bar lengths tensor} \beta_{2j}^{\tens p}(\phi_1) &= p \cdot \beta_{j}(\phi_1)\\ \notag
\beta_{2j-1}^{\tens p}(\phi_1) &= p \cdot \beta_{j}(\phi_1)\end{align}
for all $1 \leq j \leq K(\phi_1,\F_p).$ Now, $CF(\phi_1,\lambzero_{\F_p})$ is $\delta_0$-quasi-equivalent, for $\delta_0 \ll \epsilon_0,$ to the canonical $\lamzero$-complex $CF(\phi,\lambzero_{\F_p}).$ Note that again the length of the bar-length spectrum of the latter is bounded from above by $K(\phi)$ independent of $\phi_1.$ Furthermore we obtain that the bar-length spectra of $CF(\phi_1,\lambzero_{\F_p})$  and $CF(\phi,\lambzero_{\F_p})$ are $2\delta_0$-close. Hence, the doubled $p$-scaled bar-length spectrum of $CF(\phi,\lambzero_{\F_p})$ is $p \cdot 2\delta_0$-close to that of $C_{Tate}(\Z/(p),CF(\phi_1,\lambzero_{\F_p})^{\otimes p}).$ Similarly, by the construction of Section \ref{subsubsec: local Tate canonical}, the latter complex $CF_{Tate,\Z/(p)}(\phi_1^p,\lambzero_{\F_p})$ is $3\delta_1$-quasi-equivalent to $\ol{C}_{Tate}(\phi^p).$ 

Combining these observations, we obtain that that the doubled $p$-scaled bar-length spectrum of $CF(\phi,\lambzero_{\F_p})$ is $2\delta_3$-close to the one of $\ol{C}_{Tate}(\phi^p),$ for $\delta_3 = 3\delta_1+ p \cdot \delta_0 + \delta_2.$ Now, we observe that the bar-length spectra of both complexes contain a number of entries bounded by a number $K_T(\phi)$ independent of the perturbation $\phi_1.$ Hence their total bar-lengths satisfy \begin{equation}\label{eq: scaled vs Tate local} 2p \cdot \beta_{\tot}(CF(\phi,\lambzero_{\F_p})) - K_T(\phi) \cdot 2\delta_3 \leq \beta_{\tot}(\ol{C}_{Tate}(\phi^p)).\end{equation} Now, by virtue of Proposition \ref{prop: algebraic deformation inequality} and Lemma \ref{prop: alg cone}, we obtain \[\beta_{\tot}(\ol{C}_{Tate}(\phi^p)) \leq 2 \beta_{\tot}(CF(\phi^p,\lambzero_{\F_p}), T^{\delta_1} d_{\phi^p}).\] Furthermore, the number of entries in the bar-length spectrum of $CF(\phi^p,\lambzero_{\F_p})$ is bounded by $K(\phi^p)$ independent of the perturbation $\phi^p_1,$ hence by Section \ref{sec: quasi-eq} we obtain that \[2 \beta_{\tot}(CF(\phi^p,\lambzero_{\F_p}), T^{\delta_1} d_{\phi^p}) \leq 2 \beta_{\tot}(CF(\phi^p,\lambzero_{\F_p}), d_{\phi^p}) + 4K(\phi^p) \cdot \delta_1.\] Together with \eqref{eq: scaled vs Tate local} this yields that \[ 2p \cdot \beta_{\tot}(CF(\phi,\lambzero_{\F_p})) - K_T(\phi) \cdot 2\delta_3 - 4K(\phi^p) \cdot \delta_1 \leq 2 \beta_{\tot}(CF(\phi^p,\lambzero_{\F_p}), d_{\phi^p}).\] As $\phi_1$ tends to $\phi,$ $\delta_0$ tends to zero, and so does the error term $ K_T(\phi) \cdot 2\delta_3  + 4K(\phi^p) \cdot \delta_1.$ Hence, by the properties of the canonical $\lambzero$-complexes of $\phi$ and $\phi^p$ we therefore obtain \[ 2p \cdot \beta_{\tot}(\phi,\F_p) \leq 2 \beta_{\tot}(\phi^p,\F_p).\] This immediately yields Theorem \ref{thm: beta tot}.  


\begin{rmk}
It is possible to define the Tate bar-length spectrum $\{\wh{\beta}_j(\phi^p)\}$ of $\phi^p$ in the situation above by a limiting procedure, and to prove an analogue of Corollary \ref{cor: qF and SP on beta}, and that the doubled bar-length spectrum of $\phi^p$ majorizes the Tate bar-length spectrum. In particular we would recover the above statement via the estimate \[ 2p\cdot \beta_{\tot}(\phi,\F_p) = \wh{\beta}_{\tot}(\phi^p) \leq 2 \beta_{\tot}(\phi^p,\F_p).\] However, as this is not necessary for our applications herein, we omit the requisite details for this argument. 
\end{rmk}

\begin{rmk}
We expect the methods of this section to apply the question of symplectic mapping class groups of monotone symplectic manifolds, yielding generalizations of \cite[Application 2.15]{Seidel-pants}, \cite[Corollary 1]{SZhao-pants}.
\end{rmk}

\section{Proof of Theorem \ref{thm: main2}}\label{sec: proof}

\begin{proof}[Proof of Theorem \ref{thm: main2}] 
We first assume that $\mrm{char}(\bK) = 0.$ In this case $\bK$ is a field extension of $\Q.$ In view of Section \ref{subsubsec: bar-length three flavors}, the bar-length spectrum of $\phi$ with $\bK$ coefficients coincides with the one with $\Q$ coefficients, and clearly $N(\phi,\bK) = N(\phi,\Q),$ $B(\bK) = B(\Q).$ Similarly, by the Wedderburn theorem (see \cite{EntovPolterovich-semisimple}) if $QH(M,\bK)$ is semisimple, then so is $QH(M,\Q).$ Hence, it is enough to prove the statement for $\Q$ coefficients. 
	
By Lemma \ref{lma:endpoints}, \begin{equation}\label{eq: assumption} N(\phi,\Q) = \sum_{x\in \fix(\phi)} \dim_{\Q} HF^{\loc}(\phi,x) > \dim_{\Q} H_*(M,\Q),\end{equation} implies that $\beta_{\tot}(\phi,\Q) > 0.$ For $p \geq p_0(\phi)$ sufficiently large, $\phi^p$ is an admissible iteration at all the fixed points of $\phi.$ Assume by contradiction that there are no simple fixed points of $\phi^p$ for all $p \geq p_1(\phi)$ sufficiently large. Then by Theorem \ref{thm: GG persistence}, $N(\phi^p,\bK) = N(\phi,\bK)$ for $p \geq \max \{p_0(\phi), p_1(\phi)\}$ for each coefficient field $\bK.$  Furthermore, $N(\phi,\F_p) = N(\phi,\Q),$ $B(\F_p) = B(\Q)$ for $p \geq p_2(\phi),$ by the universal coefficients theorem. In particular for $p \geq \max \{p_0(\phi), p_1(\phi), p_2(\phi)\},$ $N(\phi^p,\F_p) = N(\phi^p,\Q),$ and hence  \[K(\phi^p,\F_p) = \frac{N(\phi^p,\F_p) - B(\F_p)}{2} = \frac{N(\phi^p,\Q) - B(\Q)}{2} = K(\phi^p,\Q).\] Now by Theorem \ref{thm: beta tot}, \[ p \cdot \beta_{\tot}(\phi,\F_p) \leq \beta_{\tot}(\phi^p,\F_p),\] and furthermore, by defintion  \[\beta_{\tot}(\phi^p,\F_p) \leq K(\phi^p,\F_p) \cdot \beta(\phi^p,\F_p).\] Moreover, Theorem \ref{thm: bound semisimple} together with Proposition \ref{prop: bounded beta F_p}, yields $\beta(\phi^p,\F_p) \leq C,$ for $p \geq p_3(\phi)$ and $C = 8n.$  Finally, by Lemma \ref{lemma: beta F_p indep of p}, for $p \geq p_4(\phi),$ $\beta_{\tot}(\phi,\bK) = \beta_{\tot}(\phi,\Q).$ In summary we obtain \[0 < p \cdot \beta_{\tot}(\phi,\Q) \leq K(\phi^p,\Q) \cdot C,\] for $p \geq \max\{p_0(\phi),p_1(\phi),p_2(\phi),p_3(\phi), p_4(\phi)\}.$ Hence \[N(\phi^p,\Q) = 2 \cdot K(\phi^p,\Q) + B(\Q) \geq  p \cdot \frac{2\beta_{\tot}(\phi,\Q)}{C} + B(\Q)\] grows at least linearly in $p$, in contradiction with the identity $N(\phi^p,\Q) = N(\phi,\Q)$ for all $p$ sufficiently large. This proves Theorem \ref{thm: main2} for $\Q$ coefficients.

In a similar fashion to the case of $\mrm{char}(\bK) = 0,$ the case of $\mrm{char}(\bK)= p$ reduces to $\bK = \F_p.$ In this case $N(\phi,\F_p) > B(\F_p)$ implies that $\beta_{\tot}(\phi,\F_p) > 0.$ Hence, fixing $\F_p$ coefficients henceforth, and assuming that $\fix(\phi^{p^k})$ is finite for all $k \geq 1,$ we obtain by Theorem \ref{thm: beta tot} that \[\beta_{\tot}(\phi^{p^k}) \geq p^k \beta_{\tot}(\phi) > 0.\] Now, by Theorem \ref{thm: GG persistence}, there exists $k_0$ such that replacing $\phi$ by $\psi = \phi^{p^{k_0}}$ we have $N(\psi^{p^k},\F_p) = N(\psi,\F_p)$ for all $k \geq 1.$ Furthermore \[\beta_{\tot}(\psi^{p^k},\F_p) \geq p^{k+k_0} \beta_{\tot}(\phi) > 0.\] Hence, as before we obtain \[N(\psi^{p^k}) - B(\F_p) = 2 K(\psi^{p^k}) \geq p^k \cdot C,\] for $C = 2 p^{k_0} \beta_{\tot}(\phi)/ 8n > 0.$ Therefore $N(\psi^{p^k})$ grows at least linearly in $p^k,$ which is a contradiction. 

\end{proof}


\section{Proofs of algebraic results}\label{section: proofs} 

For the convenience of the reader, and for further reference, we recall the proof of the result of Entov-Polterovich \cite{EntovPolterovichCalabiQM}, following the modifications of Ostrover \cite{OstroverAGT}.

\begin{proof}[Proof of Proposition \ref{prop: bounded beta K}]
It is enough to prove that for each $1 \leq j \leq S,$ there is a constant $D(e_j,\bK)$ such that $c(e_j,\til{\phi},\bK) + c(e_j,\til{\phi}^{-1},\bK) \leq D(e_j,\bK).$ Then $D_{E}(\bK) = \max_{1 \leq j\leq S} D(e_j,\bK)$ will be the sought bound. Since by \cite[Section 5.4]{BiranCorneaRigidityUniruling} spectral invariants remain the same after extending coefficients, we shall work with the Novikov field $\Lambda_{M,\tmon}$ with quantum variable $s$ of degree $(-2).$

Applying Poincar\'{e} duality for spectral invariants \cite{EntovPolterovichCalabiQM} we obtain \[c(e_j, \til{\phi}^{-1},\bK) = - \inf \{\,c(b,\til{\phi},\bK)\,|\; \Pi(b,e_j) \neq 0\}. \] 
For each $b$ with $\Pi(b,e_j) \neq 0,$ note that $\Pi(b,e_j) = \Pi(e_j \ast b^{[0]}, [M])$ where $b^{[0]} \in QH_0(M,\bK)$ is the zero-degree component of $b,$ and $c(b,\til{\phi},\bK) \geq c(b^{[0]},\til{\phi},\bK).$ Therefore it is enough to bound $c(b^{[0]},\til{\phi},\bK),$ whenever $\Pi(e_j, b^{[0]}) \neq 0.$ Note now that since $\deg(s) = -2,$ $b^{[0]} s^{-n} \in QH_{2n}(M,\bK),$ \[c(b^{[0]},\til{\phi},\bK) = c(b^{[0]} s^{-n},\til{\phi},\bK) - 2n,\] and $\Pi(e_j,b^{[0]}) \neq 0$ implies $e_j \ast b^{[0]} s^{-n} \neq 0$ in $F_{j,\bK} = e_j \ast QH_{2n}(M,\bK).$ Since $F_{j,\bK}$ is a field, we deduce that $e_j \ast b^{[0]} s^{-n}$ is invertible in $F_{j,\bK}.$ Then its inverse $(e_j \ast b^{[0]} s^{-n})^{-1} \in F_{j,\bK}$ is of the form $e_j \ast c,$ for some $c \in QH_{2n}(M,\bK).$

By the subadditivity property of spectral invariants we obtain \[c(b^{[0]} s^{-n},\til{\phi},\bK) \geq c(e_j \ast b^{[0]} s^{-n},\til{\phi},\bK) - c(e_j,\id,\bK),\] and \[c(e_j \ast b^{[0]}s^{-n},\til{\phi},\bK) \geq c(e_j,\til{\phi},\bK) - c((e_j \ast b^{[0]}s^{-n})^{-1},\id,\bK)\] Combining the last few inequalities, we have \[c(b^{[0]},\til{\phi},\bK) \geq c(e_j,\til{\phi},\bK) - 2n - \cA(e_j) - \cA((e_j \ast b^{[0]}s^{-n})^{-1}),\] whenever $\Pi(b^{[0]},e_j) \neq 0.$ Now it suffices to prove that $\cA(e_j) \leq 2n,$ and $\cA((e_j \ast b^{[0]}s^{-n})^{-1}) \leq 2n.$ This is an immediate corollary of the following statement.

\begin{lma}\label{lma: 2n}
For each element $a \in QH_{2n}(M,\bK)\setminus\{0\},$ $\cA(a) \leq 2n.$
\end{lma}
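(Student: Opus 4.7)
The plan is to compute $\cA(a)$ directly by expanding $a$ in the canonical orthonormal basis coming from $H_*(M,\bK)$, and then to use the degree-$2n$ constraint to control the valuations of the coefficients. Fix a homogeneous $\bK$-basis $E = \bigsqcup_{j=0}^{2n} E_j$ of $H_*(M,\bK)$ with $E_j \subset H_j(M,\bK)$. By construction $E \otimes 1_{\Lambda}$ is an orthonormal basis for $(QH(M,\bK), \cA)$, so if we expand
\[ a = \sum_{j,\, e \in E_j} \lambda_{j,e}\, (e \otimes 1), \qquad \lambda_{j,e} \in \Lambda_{M,\tmon}, \]
the orthonormality formula yields $\cA(a) = \max\{-\nu(\lambda_{j,e}) : \lambda_{j,e} \neq 0\}$. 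It therefore suffices to show that every nonzero $\lambda_{j,e}$ satisfies $\nu(\lambda_{j,e}) \geq -2n$.

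The key step is a homogeneity argument. Since $e \otimes 1$ has degree $j$ and $a$ has degree $2n$, each nonzero $\lambda_{j,e}$ is a homogeneous element of $\Lambda_{M,\tmon} = \bK[s^{-1},s]]$ of degree $2n - j$. As $\deg(s) = -2$, this forces $\lambda_{j,e} = c_{j,e}\, s^{(j-2n)/2}$ with $c_{j,e} \in \bK$; in particular the $s$-exponent $k = (j-2n)/2$ lies in $[-n, 0]$ because $j$ ranges over $[0, 2n]$. Using the normalization $\nu(s) = 2$ coming from the embedding $s \mapsto T^{2\kappa_M}$ into $\Lambda_{\tuniv}$, we obtain $\nu(\lambda_{j,e}) = 2k = j - 2n \geq -2n$. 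Hence $\cA(a) = \max\{2n - j\} \leq 2n$, which is the desired bound. The bound is sharp, attained by $a = [\mathrm{pt}] \cdot s^{-n}$ for $[\mathrm{pt}] \in H_0(M,\bK)$.

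I do not anticipate any technical obstacle: the argument is a straightforward degree-and-valuation bookkeeping paired with the orthonormal-basis formula for $\cA$. The only real content is matching the grading of $\Lambda_{M,\tmon}$ against the dimension constraint $0 \leq j \leq 2n$, which is what pins down the $s$-exponents of the coefficients to a bounded range.
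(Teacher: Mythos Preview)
Your proof is correct and follows essentially the same approach as the paper: both exploit the grading to observe that $QH_{2n}(M,\bK) = \bigoplus_{0 \leq k \leq n} H_{2n-2k}(M,\bK)\cdot s^{-k}$, from which $\cA(a) \leq \cA(s^{-n}) = 2n$ is immediate. Your version simply spells out the orthonormal-basis computation and the valuation bookkeeping that the paper compresses into that one line.
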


Deferring the proof of Lemma \ref{lma: 2n}, we summarize that \[c(e_j, \til{\phi}^{-1},\bK) = - \inf \{\,c(b,\til{\phi},\bK)\,|\; \Pi(b,e_j) \neq 0\} \leq \] \[ \leq - \inf \{\,c(b^{[0]},\til{\phi},\bK)\,|\; \Pi(b^{[0]},e_j) \neq 0\} \leq -c(e_j,\til{\phi},\bK) + 6n,\]

 \[c(e_j, \til{\phi}^{-1},\bK) + c(e_j,\til{\phi},\bK) \leq 6n,\] finishing the proof.

\end{proof}

\begin{proof}[Proof of Lemma \ref{lma: 2n}]
This lemma is evident once we notice that \[QH_{2n}(M,\bK) = \bigoplus_{0 \leq k \leq n} H_{2n -2k}(M,\bK) \cdot s^{-k},\] and hence $\cA(a) \leq \cA(s^{-n}) = 2n,$ for all $a \in QH_{2n}(M,\bK) \setminus \{0\}.$
\end{proof}

\begin{proof}[Proof of Proposition \ref{prop: bounded beta F_p}]
Once semisimplicity is shown, the existence of an upper bound $D_{E}(\F_p) \leq 6n$ of $\gamma_E (\phi,\F_p)$ is an application of Proposition \ref{prop: bounded beta K}.

It remains is to show semisimplicity with $(e_{1,p},\dots,e_{S_p,p})$ as in Proposition \ref{prop: bounded beta F_p} and Remark \ref{rmk: reduction mod p of idempotents}.

For semisimplicity we note that $A_j = e_j \ast QH_{2n}(M,\Z) \subset \cO_{F_{j,\Q}}$ is a lattice: it is clearly a finitely generated $\Z$-module, whence it consists of integral elements of $F_{j,\Q}$ (over $\Z$), since $QH_{2n}(M,\Z) \otimes_{\Z} \Q = QH_{2n}(M,\Q),$ we obtain $A_j \otimes_{\Z} \Q = F_{j,\Q}.$  It is a well-known theorem in algebraic number theory that the ring of integers of a number field, such as $\cO_j = \cO_{F_{j,\Q}},$ is a lattice in the number field (here $F_{j,\Q}$). Now consider $\cO_j/A_j.$ This is a torsion abelian group. Hence if $p$ is sufficiently large, then $\cO_j/A_j \otimes_{\Z} \F_p = 0.$ If $p$ is again sufficiently large, so that $\Tor(\cO_j/A_j,\F_p) = 0,$  then the natural map $A_j \otimes_{\Z} \F_p \to \cO_j \otimes_{\Z} \F_p$ induced by the inclusion $A_j \to \cO_j$ is an isomorphism. For $p$ sufficiently large, so that $QH_{2n}(M,\Z)$ has no torsion elements of order $p,$ and the reduction $[e_{j}]_p$ of $e_j$ mod $p$ is well defined, we have $F_{j,\F_p} = [e_{j}]_p \ast QH(M,\F_p) = A_j \otimes_{\Z} \F_p.$ However, for all $p$ that do not divide the discriminant $\Delta_{F_{j,\Q}} \in \Z_{>0}$ of the number field $F_{j,\Q},$ the ring $\cO_{F_{j,\Q}}/p\cO_{F_{j,\Q}} = \cO_{F_{j,\Q}} \otimes_{\Z} \F_p \cong A_j \otimes_{\Z} \F_p = F_{j,\F_p} = [e_{j}]_p \ast QH(M,\F_p)$ is in fact a direct sum of fields (that are hence finite extensions of $\F_p$). Taking $p \geq p_0$ that works for all $e_j$ with $1 \leq j \leq S,$ the unities of the field direct summands of $F_{j,\F_p}$ give idempotents $\{e_{i,p}\}_{i \in I_j},$ such that $(e_{1,p},\ldots,e_{S_p,p})$ split $QH_{2n}(M,\F_p)$ into a direct sum of fields, and satisfy $[e_{j}]_p = \sum_{i \in I_j} e_{i,p}.$

\end{proof}

\begin{proof}[Proof of Lemma \ref{lemma: beta F_p indep of p}]\label{proof: Lemma 1}

We first prove this statement in the case when $\phi$ is non-degenerate. The rational bar-length spectrum of $\phi$ is determined by the multiset of bar-lengths in $V_{r}(\til{\phi})$ with $r \in \{0,\ldots,2N_M-1\}.$ These, in turn, are determined by the $\Q$-linear maps $d_{r,\Q}: CF(\til{\phi},\Q)_{r+1} \to CF(\til{\phi},\Q)_{r}$ of finite dimensional vector spaces over $\Q,$ for $r \in \{0,\ldots, 2N_M -1 \}.$ Furthermore we note that in fact $d_{r,\Q}$ are defined over $\Z$ in the following sense. There exist chain complexes $d_r = d_{r,\Z}: CF(\til{\phi},\Z)_{r+1} \to CF(\til{\phi},\Z)_{r}$ of free $\Z$-modules, such that $CF(\til{\phi},\Q)_{j} = CF(\til{\phi},\Z)_{r} \otimes_{\Z} \Q$ and $d_{r,\Q} = d_r \otimes 1.$

Denote by $B(r) = (e_1,\ldots,e_{N(r,\phi)})$ a $\Z$-basis of $CF(\til{\phi},\Z)_{r}$ given (canonically up to $\{\pm 1\}^{N(j,\phi)}$) by the fixed points of $\phi.$ Then $N(\phi) = \sum_{r = 0}^{2c_M -1} N(r,\phi).$ Furthermore there are orthogonal, in the sense of \cite{UsherZhang}, bases \[P(B(r+1)) =(f_1,\ldots, f_{N(r+1)}),\; Q(B(r)) = (g_1,\ldots, g_{N(r)})\] where $P,Q$ are $\Q$-linear automorphisms of $CF(\til{\phi},\Q)_{r+1},\; CF(\til{\phi},\Q)_{r}$ respectively, that preserve the action filtration, and such that the degree $r$ finite bar-length spectrum is given by $\cA(f_i) - \cA(g_i),$ $d_{r,\Q} f_i = g_i,$ for $1 \leq i \leq K(r,\phi,\Q),$ $d_{r,\Q} f_i = 0,$ for $r \in \{ 0,\ldots, 2N_M -1 \}$ and $i > K(r,\phi,\Q).$ Note that in particular $K(\phi,\Q) = \sum_{r = 0}^{2c_M -1} K(r,\phi,\Q).$

Furthermore, there are large integers $R_1, R_2$ such that $P \in \frac{1}{R_1} End(CF(\til{\phi},\Z)_{r+1}),$ $Q \in \frac{1}{R_1} End(CF(\til{\phi},\Z)_{r}),$ $P^{-1} \in \frac{1}{R_2} End(CF(\til{\phi},\Z)_{r+1}),$ $Q^{-1} \in \frac{1}{R_2} End(CF(\til{\phi},\Z)_{r}).$ This means that there exist maps $P_1 \in End(CF(\til{\phi},\Z)_{r+1}),$ $Q_1 \in  End(CF(\til{\phi},\Z)_{r}),$ $P_{-1} \in  End(CF(\til{\phi},\Z)_{r+1}),$ $Q_{-1} \in  End(CF(\til{\phi},\Z)_{r}),$ such that $P = \frac{1}{R_1} P_1,$ $Q = \frac{1}{R_1} Q_1,$ $P^{-1} = \frac{1}{R_2} P_{-1},$ $Q^{-1} = \frac{1}{R_2} Q_{-1}.$

Choosing a prime $p \gg R_1, R_2$ we can reduce mod $p$ to obtain maps $[P] \in End(CF(\til{\phi},\F_p)_{r+1}),$ $[Q] \in  End(CF(\til{\phi},\F_p)_{r}),$ $[P^{-1}] \in End(CF(\til{\phi},\F_p)_{r+1}),$ $[Q^{-1}] \in  End(CF(\til{\phi},\F_p)_{r}),$ given by $[P] = [R_1]^{-1} [P_1],$ $[Q] = [R_1]^{-1} [Q_1],$ $[P^{-1}] = [R_2]^{-1} [P_{-1}],$ $[Q^{-1}] = [R_2]^{-1} [Q_{-1}],$ where where $[-]$ denotes reduction mod $p.$ Observe that all these maps are action non-increasing. Moreover, $[P] [P^{-1}] = \id,$ $[Q][Q^{-1}] = \id,$ hence these maps are also invertible, and moreover action-preserving. Hence the bases $(f_{i,p}) = [P](B(r+1)),(g_{i,p}) = [Q](B(r))$ are orthogonal, and satisfy the same relations as $(f_i),$ $(g_i),$ but with coefficients in $\F_p.$ Therefore, for such sufficiently large $p,$ the $\F_p$ bar-length spectrum of $\phi$ coincides with the $\Q$ bar-length spectrum of $\phi.$

In the case when $\phi$ is general with isolated fixed points, we consider a Hamiltonian perturbation $\phi_1$ of $\phi$ with Hamiltonian sufficiently $C^2$-small, so that the bars of $\phi_1$ of length at least $\epsilon_0$ have lengths $\epsilon_1$-close to the corresponding bar-lengths of $\phi$ (see Section \ref{subsubsec: barcode isolated}), independently of the coefficient field. By the property of $\epsilon_1,$ this means that the bar-length spectrum of $\phi_1$ over $\epsilon_0$ determines that of $\phi.$ Now, by the non-degenerate case, for $p$ sufficiently large, the bar-length spectrum of $\phi_1$ over $\bF_p$ coincides with that over $\Q.$ This finishes the proof.

\end{proof}

\begin{rmk}
In the above proof we used the Floer complex, keeping the almost complex structure $J$ implicit. While this is not important for our arguments, in fact the lower bound on $p$ for which Lemma 13 holds does not depend on the choice of $J,$ as continuation maps changing $J$ provide filtered graded isomorphisms {\em over $\Z$}. Hence properties of reduction mod $p$ remain invariant.
\end{rmk}

\begin{proof}[Proof of Lemma \ref{lma:endpoints}]\label{proof: Lemma 1}

This is a direct consquence of \cite[Theorem D]{S-Zoll}, building on \cite[Remark 62]{LSV-conj}. Alternatively, passing to the canonical $\lamzero$-complex, and tensoring with $\Lambda,$ this follows from the rank-nullity theorem: \[N = \dim (\ker (d_{\phi})) + \dim (\im (d_{\phi})) = \dim (\ker(d)/ \im(d)) + 2 \dim (\im(d)).\] Hence \[N = B + 2K.\]

\end{proof}


\begin{proof}[Proof of Proposition \ref{prop: beta leq gamma}]
We start with a caveat that simplifies notation. It is beneficial for this proof to be able to work with the (Morse-Bott) degenerate Hamiltonian diffeomorphism $\id_M,$ and the corresponding trivial class $\til{\id}_M$ in the universal cover of $\Ham(M,\om),$ generated by the constant Hamiltonian $H \equiv 0.$ 
This can be done in a few ways, in particular using the approach of cascades of Frauenfelder, or arguing up to $\epsilon,$ and using the PSS isomorphism between the quantum homology of $M$ and the Floer homology of a $C^2$-small time-independent Morse Hamiltonian $h.$

First, following Polterovich-Shelukhin-Stojisavljevic \cite{PolSheSto}, we discuss the action of the quantum homology of $M$ on the Floer persistence module $V_*(\tphi)^t = HF_{*}(\tphi)^{<t}$ for $\tphi \in \til{\Ham}(M,\om).$ There it is shown that each $a \in QH_r(M,\bK),$ with valuation $\nu(a)$ induces a persistence morphism $(a*): V_{m}(\tphi) \to V_{m + r - 2n}(\tphi)[\nu(a)].$ In particular each element $e \in QH_{2n}(M,\bK)$ gives an automorphism (up to shift in valuation) $(e*): V_{m}(\tphi) \to V_{m }(\tphi)[\nu(e)]$ for each $m \in \Z.$ In particular it is evident that for $e,e' \in QH_{2n}(M,\bK),$ $e,e' \neq 0, $ if $\delta(e,e') = \nu(e) + \nu(e') - \nu(e*{e'}) \geq 0$ then, given $e\ast e' \neq 0,$ \[(e\ast) \circ (e'\ast) = sh_{\delta(e,e')} \circ ((e\ast e')\ast),\] and if $e \ast e' = 0,$ then $(e \ast) \circ (e' \ast) = 0$ as a morphism $V_{m}(\tphi) \to V_{m }(\tphi)[\nu(e)+\nu(e')].$ Moreover, a similar statement is true for additive properties: seting $\nu(e,e') = \max\{\nu(e), \nu(e') \},$ we have \[sh_{\nu(e,e') - \nu(e)}\circ(e\ast)+sh_{\nu(e,e') - \nu(e')} \circ (e' \ast) = sh_{\nu(e,e') - \nu(e+e')} \circ ((e+e')\ast),\] if $e+e' \neq 0,$ and $(e+e')\ast = 0$ otherwise.  Finally $[M]\ast = \id_{V_m(\tphi)}.$


Now we let $e_1,\ldots, e_S \in QH_{2n}(M,\bK)$  be such that \[[M] = e_1 + \ldots + e_S\] (recall that $[M] \in QH_{2n}(M,\bK)$ is the unit element), and \[e_{j} \ast e_{k} = e_j \cdot\delta_{j,k}\] for all $1 \leq j \leq k \leq S.$ Here $\delta_{j,k} = 0$ for $j \neq k,$ and $\delta_{j,k} = 1$ for $j=k.$ Set $E=(e_1,\ldots,e_S).$ We claim that there is a natural interleaving between $V_m(\tphi)$ and $\bigoplus_{1 \leq j \leq S} \im (e_j\ast: V_m(\tphi) \to V_m(\tphi)[\nu(e_j)]).$

\begin{lma}\label{lma: split}
The persistence modules $V_m(\tphi)$ and $\bigoplus_{1 \leq j \leq S} \im (e_j\ast: V_m(\tphi) \to V_m(\tphi)[\nu(e_j)]),$ for each $m\in \Z,$ are $(0,\nu(E))$-interleaved.
\end{lma}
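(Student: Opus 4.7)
The plan is to use the algebraic relations $[M] = e_1 + \cdots + e_S$ with $([M]\ast) = \id$, together with the orthogonal-idempotent relation $e_j \ast e_k = \delta_{jk}\, e_j$, to construct explicit interleaving morphisms between $V_m(\tphi)$ and $U := \bigoplus_j W_j$, where $W_j := \im(e_j\ast \colon V_m(\tphi) \to V_m(\tphi)[\nu(e_j)])$. Set $\nu(E) := \max_j \nu(e_j)$.

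First, I would define the forward map $\ol{F} \colon V_m(\tphi) \to U$ by $\ol{F}(v) = ((e_j\ast)v)_j$; since each $(e_j\ast)$ lands in $W_j$ by definition, this is a persistence morphism of zero shift. Symmetrically, I would define $\ol{G} \colon U \to V_m(\tphi)[\nu(E)]$ by $\ol{G}((v_j)_j) = \sum_j sh_{\nu(E)-\nu(e_j)}(v_j)$, using that each $W_j \subset V_m(\tphi)[\nu(e_j)]$ is canonically pushed forward into $V_m(\tphi)[\nu(E)]$ via the structure maps of $V_m(\tphi)$.

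Second, I would verify the two composition identities using the module-theoretic relations recorded just before the lemma. For $\ol{G} \circ \ol{F}$: one computes $\ol{G}(\ol{F}(v)) = \sum_j sh_{\nu(E) - \nu(e_j)}((e_j\ast)v)$, which by the additive relation collapses to $sh_{\nu(E) - \nu([M])} \circ ([M]\ast)(v) = sh_{\nu(E)}(v)$, using $\nu([M]) = 0$ and $[M]\ast = \id$. For $\ol{F} \circ \ol{G}$: its $k$-th coordinate on $(v_j)_j$ with $v_j = (e_j\ast) u_j$ is $\sum_j (e_k\ast)\,sh_{\nu(E) - \nu(e_j)}(v_j)$; the summands with $j \neq k$ vanish via $(e_k\ast)(e_j\ast) = 0$, while the $j = k$ term reduces, by the idempotent relation $(e_k\ast)^2 = sh_{\nu(e_k)}(e_k\ast)$ applied to $v_k = (e_k\ast)u_k$, to $sh_{\nu(E)}(v_k)$. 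Hence $\ol{F} \circ \ol{G} = sh_{\nu(E)}$ on $U$, and together these give the claimed $(0,\nu(E))$-interleaving.

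The main obstacle I anticipate is the careful bookkeeping of persistence-module shifts: the various $W_j$ sit inside $V_m(\tphi)$ with different built-in filtration offsets $\nu(e_j)$, so realizing $[M] = \sum_j e_j$ as an identity among persistence morphisms requires first equalizing these offsets by pushing each summand forward to the common level $\nu(E)$ through a structure map of $V_m(\tphi)$. Once this alignment is set up, the additive and multiplicative relations on the $e_j$'s transcribe verbatim into the required interleaving identities, and no further geometric input is needed.
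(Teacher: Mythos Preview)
Your proposal is correct and follows essentially the same approach as the paper: you construct the same forward map $\ol{F}=f_E$ sending $v\mapsto((e_j\ast)v)_j$ and the same backward map $\ol{G}=g_E$ given by shifting each summand up to level $\nu(E)$ and summing, and you verify the two interleaving identities via the additive decomposition $[M]=\sum_j e_j$ and the idempotent relations $e_j\ast e_k=\delta_{jk}e_j$. The paper carries out the identical computation, with the only cosmetic difference that it leaves the vanishing of the off-diagonal terms $e_k\ast e_j$ for $k\neq j$ implicit, whereas you spell it out.
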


\begin{proof}[Proof of Lemma \ref{lma: split}]
Clearly $x \mapsto (e_1\ast x,\ldots, e_S \ast x)$ establishes a morphism \[f_{E}: V_m(\tphi) \to \bigoplus_{1 \leq j \leq S} \im (e_j\ast: V_m(\tphi) \to V_m(\tphi)[\nu(e_j)])\] of persistence modules. Let $\nu(E) = \max_{1\leq j \leq S} \nu(e_j)$ and $\delta(E) = \max_{1 \leq j\leq S} \delta(e_j,e_j).$ Note that since $e_j \ast e_j = e_j$ we have $\delta(e_j,e_j) = \nu(e_j),$ whence $\delta(E) = \nu(E).$ Then for each $1 \leq j \leq S$ there is a map \[sh_{\delta_j} := sh_{\nu(E)-\nu(e_j)}: V_m(\tphi)[\nu(e_j)] \to V_m(\tphi)[\nu(E)].\] Finally we can consider the composition \[g_E: \bigoplus_{1 \leq j \leq S} \im (e_j\ast: V_m(\tphi) \to V_m(\tphi)[\nu(e_j)]) \to V_m(\tphi)[\nu(E)]\] of the inclusion \[\bigoplus_{1 \leq j \leq S} \im (e_j\ast: V_m(\tphi) \to V_m(\tphi)[\nu(e_j)]) \to \bigoplus_{1 \leq j \leq S} V_m(\tphi)[\nu(e_j)]\] the combined shift map \[\bigoplus_{1 \leq j \leq S} V_m(\tphi)[\nu(e_j)] \xrightarrow{(sh_{\delta_1},\ldots, sh_{\delta_S})} \bigoplus_{1 \leq j \leq S} V_m(\tphi)[\nu(E)]\]  and the sum map \[\bigoplus_{1 \leq j \leq S} V_m(\tphi)[\nu(E)] \xrightarrow{\Sigma} V_m(\tphi)[\nu(E)].\]

We claim that $(f_E,g_E)$ is a $(0,\nu(E))$-interleaving.  Indeed, in one direction for each $x \in  V_m(\tphi),$ \[g_E \circ f_E(x) = \sum_{1 \leq j \leq S} sh_{\delta_j} (e_j \ast x) = (\sum_{1 \leq j \leq S} sh_{\delta_j} \circ (e_j \ast)) x =\] \[= sh_{\nu(E)} \circ ([M] \ast) x = sh_{\nu(E)} ([M] \ast x) = sh_{\nu(E)} (x).\]

In the other direction, for each $y =(e_1 \ast y_1,\ldots, e_S \ast y_S) \in \bigoplus_{1 \leq j \leq S} \im (e_j\ast: V_m(\tphi) \to V_m(\tphi)[\nu(e_j)]),$ with $y_j \in V_m(\tphi)$ for all $1 \leq j \leq S,$ 

\[f_E[\nu(E)] \circ g_E (y) = f_E (\sum_{1 \leq j \leq S} sh_{\delta_j}(e_j \ast y_j)) = f_E (\sum_{1 \leq j \leq S} (e_j \ast sh_{\delta_j}(y_j))\] \[= (e_1 \ast (e_1 \ast sh_{\delta_1}(y_1)),\ldots,e_S \ast (e_S \ast sh_{\delta_S}(y_S))) = sh_{\nu(E)}(e_1 \ast y_1,\ldots, e_S \ast y_S).\] The last equality follows from the identity $e_j \ast (e_j \ast (sh_{\delta_j}(y_j))) = sh_{\nu(E)}(e_j \ast y_j)$ for all $1 \leq j \leq S.$ Indeed $e_j \ast (e_j \ast (sh_{\delta_j}(y_j))) = sh_{\delta(e_j,e_j)} \circ (e_j^{\ast 2} \ast) (sh_{\delta_j}(y_j)) = sh_{\nu(e_j)} e_j \ast (sh_{\delta_j}(y_j)) = e_j \ast sh_{\nu(e_j) + \delta_j} (y_j) = e_j \ast sh_{\nu(E)} (y_j) = sh_{\nu(E)}(e_j \ast y_j).$ \end{proof}

As a corollary of Lemma \ref{lma: split}, we obtain by the stability theorem for barcodes that \[\beta(V_m(\tphi)) \leq \max_{1 \leq j \leq S} \beta(\im(e_j \ast: V_m(\tphi) \to V_m(\tphi)[\nu(e_j)])) + \nu(E).\]

Therefore to give a bound on $\beta(V_m(\tphi))$ it is sufficient to bound $\beta(\im(e_j \ast))$ for each $1 \leq j \leq S.$ We shall bound the latter in terms of $\gamma_{e_j}(\tphi).$ 

Consider Hamiltonians $F,G \in \cH$ generating $\phi^1_F = \phi, \; \phi^1_G = \psi,$ $\til{\phi}_F = \til{\phi},\; \til{\phi}_G = \til{\psi}.$ Let $CF(F,J_F),\; CF(G,J_G)$ denote the corresponding Floer complexes. Let $\Delta_{F,G} = G \# \overline{F}$ be the difference Hamiltonian, so that $[\til{\phi}_{\Delta_{F,G} \# F}] = [\til{\phi}_{G}]$ in $\til{\Ham}.$ Fix $\epsilon > 0,$ and arguing up to $\epsilon,$ choose perturbations $F_{\eps},G_{\eps}$ of $F,G$ with $||F_{\eps} - F||_{C^2} \ll \epsilon,\; ||G_{\eps} - G||_{C^2} \ll \epsilon$ such that $F_{\eps},G_{\eps}$ and $\Delta_{F,G,\eps}:= \Delta_{F_{\eps},G_{\eps}}$ are non-degenerate (then $\Delta_{G,F,\eps}:= \Delta_{G_{\eps},F_{\eps}}$ will also be non-degenerate, and $\til{\phi}_{\Delta_{F,G,\eps}} \cdot \til{\phi}_{\Delta_{G,F,\eps}} = 1$ in $\til{\Ham}$), and $||\Delta_{F,G,\eps} - \Delta_{F,G}||_{C^2} \ll \epsilon,$ $||\Delta_{G,F,\eps} - \Delta_{G,F}||_{C^2} \ll \epsilon.$

Let $c_{F,G,\epsilon} \in CF_{2n}(\Delta_{F,G,\eps})$ be an element representing $PSS_{\Delta_{F,G,\eps}}([e_j])$ with \[{\cA_{\Delta_{F,G,\eps}}(c_{F,G,\epsilon}) \leq c(e_j, \Delta_{F,G}) + \eps_0,}\] with $\eps_0 \ll \eps.$ Moreover, reversing the r\^{o}les of $F$ and $G,$ we obtain similarly an element $c_{G,F,\eps} \in CF_{2n}(\Delta_{G,F,\eps})$ representing $PSS_{\Delta_{G,F,\eps}}([e_j])$ with \[\cA_{\Delta_{G,F,\eps}}(c_{G,F,\epsilon}) \leq c(e_j, \Delta_{G,F}) + \eps_0,\] with $\eps_0 \ll \eps.$

Finally we note that $CF(F),CF(F_{\epsilon})$ (and $CF(G),CF(G_{\epsilon})$) are related by interpolation continuation chain maps that shift the action by at most $\eps_0 \ll \eps.$ 

Now we consider the chain map $C_{F,G,\eps,0}: CF(F_{\eps}) \to CF(G_{\eps})$ given by $x \mapsto c_{F,G,\eps} \ast x.$ Composing this with the interpolation-continuation maps we obtain a map $C_{F,G,\eps}: CF(F) \to CF(G)$ satisfying the action estimate $\cA_{G}(C_{F,G,\eps}(x)) \leq c(e_j, \Delta_{F,G}) + \eps_0 + \cA_{F}(x)$ for each $x \in CF(F),$ where $\eps_0 \ll \eps.$ Similarly, we obtain a map $C_{G,F,\eps}: CF(G) \to CF(F)$ satisfying the action estimate $\cA_{F}(C_{G,F,\eps}(y)) \leq c(e_j, \Delta_{G,F}) + \eps_0 + \cA_{G}(y)$ for each $y \in CF(G),$ where $\eps_0 \ll \eps.$

Now we claim that $C_{F,G,\eps},\;C_{G,F,\eps}$ induce a $(\frac{1}{2}\gamma_{e_j}(\til{\phi},\til{\psi}) + \eps)$-interleaving between $I_m(e_j,F) =  \im(e_j\ast:V_m(F) \to V_m(F)[\nu(e_j)])[-\nu(e_j)]$ and a suitable shift of $I_m(e_j,G) = \im(e_j \ast: V_m(G) \to V_m(G)[\nu(e_j)])[-\nu(e_j)].$ While this is not strictly necessary, we added the extra overall shift $[-\nu(e_j)]$ so that $I_m(e_j,F), I_m(e_j,G)$ be submodules of $V_m(F), V_m(G).$ 

We consider $C_{F,G,\eps} \circ C_{G,F,\eps},$ $C_{G,F,\eps} \circ C_{F,G,\eps}.$ Denote by $e_{j,0} \in CF(f,\Lambda_{\bK})$ a chain in the Morse complex computing $QH(M, \bK)$ representing $e_j,$ with $\cA(e_{j,0}) = \nu(e_j).$ We shall often omit the subscript $0,$ and write $e_j$ for $e_{j,0}$ by abuse of notation.

\begin{lma}\label{Lemma: associative}
There exist chain homotopies $R_{F}: CF(F) \to CF(F)[1],$ and $\;R_{G}:CF(G) \to CF(G)[1]$ of degree $1,$ such that:

\[C_{G,F,\eps} \circ C_{F,G,\eps}  = (\ast e_{j,0}) + dR_{F} - R_{F}d,\]

\[C_{F,G,\eps} \circ C_{G,F,\eps}= (\ast e_{j,0}) + dR_{G} - R_{G}d,\]

\[\cA_{F}(R_{F}(x)) \leq c(e_j, \Delta_{G,F},\bK) + c(e_j, \Delta_{F,G},\bK) + \eps_1 + \cA_{F}(x),\]

\[\cA_{G}(R_{G}(y)) \leq c(e_j, \Delta_{G,F},\bK) + c(e_j, \Delta_{F,G},\bK) + \eps_1 + \cA_{G}(y),\]

for each $x \in CF(F),$ $y \in CF(G),$ where $\eps_1 \ll \eps,$ $\eps_1 \geq 2 \eps_0.$

\end{lma}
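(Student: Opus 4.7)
The plan is to construct $R_F$ (and analogously $R_G$) as a sum of two chain homotopies arising from TQFT-style parametrized moduli spaces, tracking the action shifts via the topological-energy estimate.

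The composition $C_{G,F,\eps}\circ C_{F,G,\eps}$ is, by construction, the count of rigid Floer solutions on a pair of concatenated pair-of-pants surfaces with two fixed inputs labelled by the cycles $c_{F,G,\eps}$ and $c_{G,F,\eps}$. A standard neck-stretching of the connecting cylinder produces a $1$-parameter family of Riemann surfaces whose degeneration at the opposite end is a single four-punctured sphere carrying a cap with the pair-of-pants product $c_{G,F,\eps}\cdot_{pp}c_{F,G,\eps}$ inserted. Counting rigid elements of this family yields a first chain homotopy $R^{(1)}_F$ with
$$C_{G,F,\eps}\circ C_{F,G,\eps}-\bigl((c_{G,F,\eps}\cdot_{pp}c_{F,G,\eps})\ast -\bigr)=dR^{(1)}_F+R^{(1)}_F d,$$
where $\cdot_{pp}$ denotes the pair-of-pants product $CF(\Delta_{G,F,\eps})\otimes CF(\Delta_{F,G,\eps})\to CF(\Delta_{G,F,\eps}\#\Delta_{F,G,\eps})$. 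The action shift of $R^{(1)}_F$ is bounded by $\cA(c_{F,G,\eps})+\cA(c_{G,F,\eps})+O(\eps)$ via the standard topological-energy estimate.

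Next, since $\til{\phi}_{\Delta_{G,F,\eps}\#\Delta_{F,G,\eps}}=\til{\id}$ up to a $C^2$-small perturbation, a small-energy continuation canonically identifies this Floer complex with the Morse complex computing $QH(M,\bK)$. Since PSS intertwines $\cdot_{pp}$ with the quantum product, the cycle $c_{G,F,\eps}\cdot_{pp}c_{F,G,\eps}$ represents $e_j\ast e_j=e_j$, and hence differs from $e_{j,0}$ by a boundary $dw$; by the Usher--Zhang non-Archimedean spectral value decomposition, one can choose $w$ with $\cA(w)\le\cA(c_{G,F,\eps}\cdot_{pp}c_{F,G,\eps})+O(\eps)$, using that $\cA(e_{j,0})=\nu(e_j)\le 2n$ by Lemma~\ref{lma: 2n}. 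Acting by $w$ on $CF(F)$ via the quantum-module structure of Section~\ref{subsec:QH} produces a second chain homotopy $R^{(2)}_F=(w\ast-)$ between $(c_{G,F,\eps}\cdot_{pp}c_{F,G,\eps})\ast(-)$ and $e_{j,0}\ast(-)$. Setting $R_F=R^{(1)}_F+R^{(2)}_F$ (with any residual continuation error absorbed into the same formula) yields the first identity; the symmetric construction with $F$ and $G$ exchanged gives $R_G$. The action bound follows from $\cA(c_{F,G,\eps})\le c(e_j,\Delta_{F,G})+\eps_0$, $\cA(c_{G,F,\eps})\le c(e_j,\Delta_{G,F})+\eps_0$, the triangle inequality for $\cdot_{pp}$, and collecting all small error terms into a single constant $\eps_1\ll\eps$ with $\eps_1\ge 2\eps_0$.

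The step I expect to be the main obstacle is securing the primitive $w$ with the sharp action bound: namely, that $\cA(w)$ is controlled by $\cA(c_{G,F,\eps}\cdot_{pp}c_{F,G,\eps})+O(\eps)$ and not by a larger boundary-depth-type quantity, uniformly in the perturbation data. This is handled by the fact that the corresponding class in $\til{\Ham}(M,\om)$ is trivial, so the continuation identifying $CF(\Delta_{G,F,\eps}\#\Delta_{F,G,\eps})$ with the reference Morse complex for $QH$ can be chosen with arbitrarily small Hofer energy, combined with the Usher--Zhang normal form which, in the $C^2$-small Morse-Hamiltonian regime where all finite bar lengths are bounded uniformly by a constant depending only on $(M,\om)$, produces a primitive whose action exceeds that of the exact cycle by a uniformly bounded amount that can in turn be absorbed into $\eps_1$.
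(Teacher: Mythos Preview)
Your overall strategy matches the paper's: an associativity-type homotopy $R^{(1)}_F$ from a one-parameter family of four-punctured spheres, plus a second homotopy $R^{(2)}_F=(w\ast -)$ coming from a primitive $w$ of the exact cycle $c_{G,F,\eps}\cdot_{pp}c_{F,G,\eps}-e_{j,0}$. The paper proceeds exactly this way (writing $r_F$ for your $w$).

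There is, however, a genuine gap in your justification of the sharp action bound on $w$, which you correctly flagged as the main obstacle. You assert that in the $C^2$-small regime ``all finite bar lengths are bounded uniformly by a constant depending only on $(M,\om)$,'' and hence the primitive's action exceeds that of the boundary by ``a uniformly bounded amount that can in turn be absorbed into $\eps_1$.'' But a fixed constant $C(M,\om)$ cannot be absorbed into $\eps_1\ll\eps$; if that were all you knew, the resulting estimate would read $\cA_F(R_F(x))\le c(e_j,\Delta_{G,F})+c(e_j,\Delta_{F,G})+C(M,\om)+\eps_1+\cA_F(x)$, which is not the lemma. The Usher--Zhang normal form only gives $\cA(w)\le\cA(dw)+\beta$, so you need $\beta$ itself to be $O(\eps)$ (or zero), not merely bounded.

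The fix is exactly what the paper does: pass via PSS (or a small continuation) all the way to the Morse complex $CM(f,\Lambda_{\bK})$ computing $QH(M,\bK)$, where one has $\beta(CM(f,\Lambda_{\bK}))=0$ \emph{exactly}. Indeed, the generators all sit at filtration level $0$ and the Morse differential does not involve the Novikov variable, so every exact element has a primitive at the same filtration level. Equivalently, in the Floer picture, the boundary depth is $1$-Lipschitz in Hofer's norm, so for a Hamiltonian generating $\til\id$ perturbed by something $C^2$-small one gets $\beta=O(\eps)$, not merely $\beta\le C(M,\om)$. With this in place your $\cA(w)\le\cA(dw)+O(\eps)$ holds, and since $\cA(dw)\le\max\{\cA(c_{G,F,\eps}\cdot_{pp}c_{F,G,\eps}),\nu(e_j)\}=\gamma_{e_j}(\til\phi_G\til\phi_F^{-1})+2\eps_0$ (using $\gamma_{e_j}\ge\nu(e_j)$), the stated bound follows.
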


We note that $c(e_j, \Delta_{G,F},\bK) + c(e_j, \Delta_{F,G},\bK) = \gamma_{e_j,\bK}(\til{\phi}_G \til{\phi}_F^{-1}),$ since $c(e_j, \Delta_{F,G},\bK) = c(e_j, \til{\phi}_G \til{\phi}_F^{-1},\bK)$ and $c(e_j, \Delta_{G,F},\bK) = c(e_j, \til{\phi}_F \til{\phi}_G^{-1},\bK).$ Also it makes sense to remark that $\cA(e_j) = c(e_j,1,\bK) \leq c(e_j, \Delta_{G,F},\bK) + c(e_j, \Delta_{F,G},\bK)$ by the triangle inequality for spectral invariants, as $e_j \ast e_j = e_j.$

Deferring the proof of Lemma \ref{Lemma: associative}, we observe that on the persistence level, for each $m \in \Z,$ the chain maps $C_{F,G,\eps}$ and $C_{G,F,\eps}$ give persistence morphisms \[[C_{F,G,\eps}]: V_m(F) \to V_m(G)[c(e_j, \Delta_{F,G},\bK) + \eps_0],\] \[[C_{G,F,\eps}]: V_m(G) \to V_m(F)[c(e_j, \Delta_{G,F},\bK) + \eps_0],\] that satisfy the following composition relations \[sh_{\eps_1 - 2\eps_0} \circ [C_{G,F,\eps}][c(e_j, \Delta_{F,G},\bK) + \eps_0] \circ [C_{F,G,\eps}] = sh_{\gamma_{e_j,\bK}(\til{\phi}_G \til{\phi}_F^{-1})+\eps_1 - \cA(e_j)} (e_j \ast),\]  \[sh_{\eps_1 - 2\eps_0} \circ [C_{F,G,\eps}][c(e_j, \Delta_{G,F},\bK) + \eps_0] \circ [C_{G,F,\eps}] = sh_{\gamma_{e_j,\bK}(\til{\phi}_G \til{\phi}_F^{-1})+\eps_1 - \cA(e_j)} (e_j \ast),\] where the equalities are of persistence morphisms \[V_m(F) \to V_m(F)[\gamma_{e_j,\bK}(\til{\phi}_G \til{\phi}_F^{-1})+\eps_1],\] \[V_m(G) \to V_m(G)[\gamma_{e_j,\bK}(\til{\phi}_G \til{\phi}_F^{-1})+\eps_1],\] respectively.

It is almost immediate that these maps commute with $e_j\ast:V_m(F) \to V_m(F)[\nu(e_j)],$ and $e_j\ast:V_m(G) \to V_m(G)[\nu(e_j)],$ and hence induce maps \[\Phi'_{F,G,\eps}: I_m(e_j, F) \to I_m(e_j,G)[c(e_j, \Delta_{F,G},\bK) + \eps_0],\] \[\Phi'_{G,F,\eps}: I_m(e_j, G) \to I_m(e_j,F)[c(e_j, \Delta_{G,F},\bK) + \eps_0],\] with the same commutation relations as above, with the additional observation that $(e_j \ast)|_{I_m(e_j,F)} = sh_{\nu(e_j)},$ $(e_j \ast)|_{I_m(e_j,G)} = sh_{\nu(e_j)},$ so that \[sh_{\eps_1 - 2\eps_0} \circ \Phi'_{G,F,\eps}[c(e_j, \Delta_{F,G},\bK) + \eps_0] \circ \Phi'_{F,G,\eps} = sh_{\gamma_{e_j,\bK}(\til{\phi}_G \til{\phi}_F^{-1})+\eps_1},\]  \[sh_{\eps_1 - 2\eps_0} \circ \Phi'_{F,G,\eps}[c(e_j, \Delta_{G,F},\bK) + \eps_0] \circ \Phi'_{G,F,\eps} = sh_{\gamma_{e_j,\bK}(\til{\phi}_G \til{\phi}_F^{-1})+\eps_1},\] where the equalities are of persistence morphisms \[I_m(e_j,F) \to I_m(e_j,F)[\gamma_{e_j,\bK}(\til{\phi}_G \til{\phi}_F^{-1})+\eps_1],\] \[I_m(e_j, G) \to I_m(e_j, G)[\gamma_{e_j,\bK}(\til{\phi}_G \til{\phi}_F^{-1})+\eps_1],\] respectively.

Define $\overline{\gamma}_{e_j,\bK}(\til{\phi}_G \til{\phi}_F^{-1}) = \frac{1}{2}(-c(e_j,\til{\phi}_G \til{\phi}_F^{-1},\bK) + c(e_j,\til{\phi}_F \til{\phi}_G^{-1},\bK)).$

Post-composing each of the maps $\Phi'_{F,G,\eps},\; \Phi'_{G,F,\eps}$ with $sh_{\frac{1}{2} \eps_1 - \eps_0},$ and restating them in terms of the persistence modules \[\overline{I}_m(e_j,F) = I_m(e_j,F),\] \[\overline{I}_m(e_j,G) = I_m(e_j,G)[\frac{1}{2}\overline{\gamma}_{e_j,\bK}(\til{\phi}_G \til{\phi}_F^{-1})]\] we obtain maps \[\Phi_{F,G,\eps}: \overline{I}_m(e_j, F) \to \overline{I}_m(e_j,G)[\frac{1}{2}\gamma_{e_j,\bK}(\til{\phi}_G \til{\phi}_F^{-1}) + \frac{1}{2}\eps_1],\] \[\Phi_{G,F,\eps}: \overline{I}_m(e_j, G) \to \overline{I}_m(e_j,F)[\frac{1}{2}\gamma_{e_j,\bK}(\til{\phi}_G \til{\phi}_F^{-1}) + \frac{1}{2}\eps_1],\] with  \[ \Phi_{G,F,\eps}[\frac{1}{2}\gamma_{e_j,\bK}(\til{\phi}_G \til{\phi}_F^{-1}) + \frac{1}{2}\eps_1] \circ \Phi_{F,G,\eps} = sh_{\gamma_{e_j,\bK}(\til{\phi}_G \til{\phi}_F^{-1})+\eps_1},\]  \[\Phi_{F,G,\eps}[\frac{1}{2}\gamma_{e_j,\bK}(\til{\phi}_G \til{\phi}_F^{-1}) + \frac{1}{2}\eps_1] \circ \Phi_{G,F,\eps} = sh_{\gamma_{e_j,\bK}(\til{\phi}_G \til{\phi}_F^{-1})+\eps_1},\]

This means that $\Phi_{F,G,\eps},$ $\Phi_{G,F,\eps}$ give a $(\frac{1}{2}\gamma_{e_j,\bK}(\til{\phi}_G \til{\phi}_F^{-1}) + \frac{1}{2}\eps_1)$-interleaving between $\overline{I}_m(e_j,F),$ and $\overline{I}_m(e_j,G).$ However, since $\eps_1 \ll \eps,$ and $\eps$ was arbitrarily chosen, we see that the interleaving distance between $\overline{I}_m(e_j,F),$ and $\overline{I}_m(e_j,G)$ is $\frac{1}{2}\gamma_{e_j,\bK}(\til{\phi}_G \til{\phi}_F^{-1}).$ The statements on the bar-length spectrum, and the bottle-neck distance on barcodes modulo $\R$-shifts now follow immediately.

\end{proof}

\begin{proof}[Proof of Lemma \ref{Lemma: associative}]

The proof consists of a few steps, all of which essentially amount to proving the associativity of the pair-of-pants product in Floer homology, taking care of the action-filtration levels. We refer to \cite{OhBook, SeidelBook}, and references therein, for the technical framework related to associativity, and focus on the filtration aspects below. Related arguments appear in \cite{KS-bounds} in the case of Lagrangian Floer homology.

First we claim that for each two cycles $a,b \in CF(\Delta_{F,G,\eps}), CF(\Delta_{G,F,\eps})$ there is a well-defined product $a \ast b \in QH(M;\bK)$ and there exist chain-homotopies $R'_{a,b},R'_{b,a}$ such that 

\[b \ast (a \ast -) = (b\ast a) \ast (-) + dR'_{F,b,a} - R'_{F,b,a}d,\]

\[a \ast (b \ast -) = (a\ast b) \ast (-) + dR'_{G,a,b} - R'_{G,a,b}d,\]

\[\cA_{F}(R'_{F,a,b}(x)) \leq \cA_{\Delta_{F,G,\eps}}(a) + \cA_{\Delta_{F,G,\eps}}(b) + \eps_0 + \cA_{F}(x),\]

\[\cA_{G}(R'_{G,a,b}(y)) \leq \cA_{\Delta_{F,G,\eps}}(a) + \cA_{\Delta_{F,G,\eps}}(b) + \eps_0 + \cA_{G}(y),\]

\[\eps_0 \ll \eps.\]

Since the constructions of the homotopies $R'_{F,b,a}$ and $R'_{G,a,b}$ are analogous, we only consider the first one. We construct $R'_{F,b,a}$ by considering a parametric Floer equation over a one-parameter family of $4$-punctured spheres over an open interval, which we identify with $\cR = \R,$ with cylindrical ends near the punctures and conformal structures degenerating in two different ways. We arrange the punctures to lie on one circle in the sphere, and the cylindrical ends to have asymptotic markers along this circle. This gives a homotopy-canonical choice of asymptotic ends along the family. There are $3$ input ends $\psi^{-}_{\De_{G,F,\eps}}, \psi^{-}_{\De_{F,G,\eps}}, \psi^{-}_{F}: Z_{-} \to S^2$, and one output end $\psi^{+}_{F}:Z_{+} \to S^2$, where $Z_{-} = (-\infty,0] \times S^1,$ $Z_{+} = [0,\infty) \times S^1,$ endowed with the standard complex structure. All these maps are which are proper holomorphic embeddings. We choose Hamiltonian perturbation data $F_{\rho}$ for $\rho \in \cR,$ that agrees with $X_H \otimes dt$ on each end $\psi_{\pm,H}$ (this shall be our convention in the whole discussion). Moreover we require $F_{\rho}$ to depend on $\rho \in \cR$ in the following way. For $\rho \ll -1$ it has trivial curvature, for $\rho \gg 1$ it has curvature at most $\eps_0,$ and in these two cases it agrees with the following Hamiltonian perturbations obtained by gluind two $3$-punctured spheres, equipped with Floer data. For $\rho \ll -1,$ the first sphere has two inputs ends $\psi^{-}_{\De_{F,G,\eps}},\psi^{-}_{F}:Z_{-} \to S^2$ and one output end $\psi^{+}_{\Delta_{F,G,\eps} \# F}:Z_{+} \to \R,$ and the second sphere has two inputs ends $\psi^{-}_{\Delta_{G,F,\eps}},\psi^{-}_{\Delta_{F,G,\eps} \# F}:Z_{-} \to S^2$ and one output end $\psi^{+}_{F} :Z_{+} \to \R.$ We glue these two spheres along the ends $\psi^{\pm}_{\Delta_{F,G,\eps}\# F} ,$ with gluing length $l = |\rho|,$ and note that since the Hamiltonian perturbations agree over these ends, we get a Hamiltonian perturbation on the glued $4$-punctured sphere with cylindrical ends. For $\rho \gg 1$ the situation is identical, except that the two spheres with cylindrical ends are as follows. Fix a small Morse function $h$ of $C^0$-norm at most $\eps'_0 \ll \eps_0.$ The first sphere has two inputs ends $\psi^{-}_{\Delta_{G,F,\eps}},\psi^{-}_{\Delta_{F,G,\eps}}:Z_{-} \to S^2$ and one output end $\psi^{+}_{h} :Z_{+} \to \R,$ and the second sphere has two inputs ends $\psi^{-}_{h},\psi^{-}_{F}:Z_{-} \to S^2$ and one output end $\psi^{+}_{F} :Z_{+} \to \R.$ We glue these two spheres along the ends $\psi^{\pm}_{h} ,$ over which the Hamiltonian perturbation agree, with gluing length $l = |\rho|.$ It is easy to see that such Floer data with the prescribed curvature assumptions exist. A generic choice $I$ of domain (and parameter) dependent almost complex structures makes the index $0$ and $1$ solution space of these parametric equations transversely cut out. We require $I$ to agree with the $l = |\rho|$-gluing of regular domain dependent almost complex structures on the relevant $3$-punctured spheres at the ends $\rho \ll -1, \rho \gg 1.$  Finally, for $l$ sufficiently large we see that the index $0$ solutions for the equaltion on $S_{\rho}$ give compositions of operations corresponding to the $3$-punctured spheres. Plugging $b \otimes a$ into the two corresponding inputs, we get $b \ast (a \ast -)$ for $\rho \ll -1,$ and $(b \ast' a) \ast'' (-).$ Here the operations are $\ast': CF(\De_{G,F,\eps}) \otimes CF(\De_{F,G,\eps}) \to CF(h)$ and $\ast'': CF(h) \otimes CF(F) \to CF(F),$ given by the relevant $3$-punctured spheres. Considering compactifications of index $1$ solution space to the parametric equation we obtain a chain homotopy $R'_{F,b,a}:CF(F) \to CF(F)[1]$ satisfying \[b \ast (a \ast -) = (b\ast' a) \ast'' (-) + dR'_{F,b,a} - R'_{F,b,a}d,\]
\[\cA_{F}(R'_{F,a,b}(x)) \leq \cA_{\Delta_{F,G,\eps}}(a) + \cA_{\Delta_{F,G,\eps}}(b) + \eps'_0 + \cA_{F}(x),\] \[\eps'_0 \ll \eps_0 \ll \eps.\]

Finally, we consider the PSS isomophisms $\Psi_{PSS}: CM(f,\Lambda_{\bK}) \to CF(h)$ and $\overline{\Psi}_{PSS}: CF(h) \to CM(f,\Lambda_{\bK}).$  It is a standard action estimate to show that \[\cA_h(\Psi_{PSS}(x)) \leq \cA(x) + \eps'_0\] \[\cA_h(\overline{\Psi}_{PSS}(y)) \leq \cA(y) + \eps'_0\]

for each $x \in CM(f,\Lambda_{\bK}),$ and $y \in CF(h),$ and moreover there exists a chain homotopy $R^{PSS}_h$ such that 

\[\Psi_{PSS} \circ \overline{\Psi}_{PSS} = 1 + d R^{PSS}_h - R^{PSS}_h d,\]

\[\cA_{h}(R^{PSS}_h(y)) \leq \cA_{h}(y) + 2 \eps_0',\]

for each $y \in CF(h).$

Finally, by gluing and homotopy of domain-dependend almost complex structures, we see that $\overline{\Psi}_{PSS}(a \ast' b)$ and  ${\Psi}_{PSS}(x) \otimes'' (-)$ are homotopic, respectively, to $a \ast b$ and $x \ast (-),$ with homotopies ($r_{a,b}$ and $R_x$) that do not increase the action by more that $\eps_0'$ (we remark that if one wishes, one can define $a \ast b$ and $x \ast (-)$ in this way).

We conclude that $R'_{F,b,a} = R''_{F,b,a} + (R^{PSS}_h(a \ast' b)) \ast''(-) + R_{\overline{\Psi}(a \ast' b)} + (r_{a,b}) \ast (-)$ gives the required homotopy, satisfying the requried action estimates.


Now we proceed by noting first that by the compatibility of the pair-of-pants product and the quantum product under the PSS isomorphism, \[[c_{G,F,\eps} \ast c_{F,G,\eps}] = e_j \ast e_j = e_j = [e_{j,0}].\] Hence \[g = c_{G,F,\eps} \ast c_{F,G,\eps} - e_{j,0}\] is a boundary in the Morse complex $CM(f,\Lambda_{\bK}).$ Moreover \[\cA(g) \leq \max\{\gamma_{e_j}(\til{\phi}_G \til{\phi}_F^{-1})+2\eps_0, \nu(e_j)\} = \gamma_{e_j}(\til{\phi}_G \til{\phi}_F^{-1})+2\eps_0,\] since $\gamma_{e_j}(\til{\phi}_G \til{\phi}_F^{-1}) \geq \nu(e_j).$ However, since $\beta(CM(f,\Lambda_{\bK})) = 0,$ there exists an element $r_F \in CM(f,\Lambda_{\bK})$ with \[\cA(r_F) \leq \gamma_{e_j}(\til{\phi}_G \til{\phi}_F^{-1})+2\eps_0\] and $g = c_{G,F,\eps} \ast c_{F,G,\eps} - e_{j,0} = d(r_F).$

Finally, by the Leibnitz rule \[(r_F \ast): CF(F) \to CF(F)[1] \] gives a homotopy between $(c_{G,F,\eps} \ast c_{F,G,\eps}) \ast (-)$ and $e_{j,0} \ast (-).$ Therefore \[R_{F} = R'_{c_{G,F,\eps},c_{F,G,\eps}} + (r_F \ast)\] gives a homotopy between $c_{G,F,\eps} \ast (c_{F,G,\eps} \ast (-))$ and $e_{j,0} \ast (-)$ and satisfies the action estimate \[\cA_{F}(R_{F}(x)) \leq \gamma_{e_j}(\til{\phi}_G \til{\phi}_F^{-1})+2\eps_0 + \cA_{F}(x),\] as required. The construction of $R_{G}$ is identical. The proof of the lemma is now finished.
\end{proof}

\begin{proof}[Proof of Proposition \ref{prop: algebraic deformation inequality}]
Recall that we work over $\Lambda = \Lambda_{\bK}$ and its extension $
\Lambda'=\Lambda_{\cK},$ where $\cK = \bK((u)):= \bK[u^{-1},u]].$
Now consider a Floer-type complex $(C,d)$ over $(\Lambda')^0$ that extends a Floer-type complex $(C_0,d_0)$ over $\Lambda,$ so that $C = {C_0 \otimes_{\Lambda} \Lambda'}$ and $d = d_0 + u d_1 + \ldots,$ for certain $\lambzero$-linear maps $d_j: C_0 \to C_0,$ $j \geq 1$ (that, other than $d_1,$ are not necessarily chain maps). Furthermore, we know that $\dim_{\Lambda'} H(C,d) = \dim_{\Lambda} H(C_0,d_0).$ We wish to compare the torsion exponents of $(C,d)$ to those of $(C_0,d_0).$

We recall that torsion exponents can be obtained from the coefficients in the Smith normal forms of the differentials considered as matrices in $Mat(N,\lambzero),$ $Mat(N,(\Lambda')^0)$ respectively.

It is a classical theorem in the theory of principal ideal domains that these coefficients, in the form of invariant divisors, $(c_1,\ldots,c_N),\; (c'_1,\ldots,c'_N)$ arranged in increasing order, so that $c_i$ (resp. $c'_i$) divides $c_j$ (resp. $c'_j$) for $i \leq j$, (so that the actual coefficients $(b_1,\ldots,b_N),\; (b'_1,\ldots,b'_N),$ the elementary factors, are given by the formulae $c_j = b_1 \cdot \ldots \cdot b_j,$ for all $1 \leq j \leq N$) are given by considering the minors of the corresponding matrices. More precisely $c_j$ is given as a (principal) generator of the ideal generated by all the determinants of $j\times j$ minors of the matrix. While $\lambzero$ is not a principal ideal domain (indeed $\Lambda_{+}$ is an ideal that is not principal), it has the property that each {\em finitely generated} ideal in it is principal. It is easy to see that the above description of the invariant divisors holds for such rings.

Finally we note that in $\lambzero$ each principal ideal $I$ is given as $I = T^{\gamma} \lambzero$ for some $\gamma \geq 0.$ We can recover $\gamma$ as $\gamma = \min \{ val(\lambda)|\; \lambda \in I \}.$ Recall that $val(0) = +\infty.$

This implies immediately that for all $1 \leq j \leq N$ the corresponding invariant divisor is $T^{\gamma_j},$ with $\gamma_j = \beta_1 + \ldots + \beta_j,$ satisfying\[\gamma_j = \min \{val(\det M)\,|\;M\;\text{is a}\; j\text{-minor of}\;d_0\}.\]

Similarly $\gamma'_j = \beta'_1 + \ldots + \beta'_j,$ satisfies \[\gamma'_j = \min \{val(\det M')\,|\;M'\;\text{is a}\; j\text{-minor of}\;d\}.\] Here we use the convention that the minimum of an empty set of non-negative real numbers is $+\infty.$ 

Now we note that since $d = d_0 + uD,$ with all the matrix entries of $D$ lying in $\Lambda^0_{\cl{L}},$ for each $j$-minor $M'$ of $d,$ we have the relation: \[\det M' = \det M + u \lambda,\] for the corresponding $j$-minor $M$ of $d_0$ and a certain coefficient $\lambda \in \Lambda^0_{\cl{L}}.$  Observing that $\det M \in \lambzero,$ while all coefficients of $u\lambda$ have positivie $u$-degree, this immediately implies that \[val(\det M') \leq val(\det M).\] Therefore \[\gamma'_j \leq \gamma_j\] for all $1 \leq j \leq N,$ whence by the definition of $\{\gamma_{j},\,\gamma'_{j}\}_{1 \leq {j} \leq N}$ we obtain \[\beta'_1 + \ldots \beta'_j \leq \beta_1 + \ldots +\beta_j \] for each $1 \leq j\leq N.$ This finishes the proof.
\end{proof}

\begin{proof}[Proof of Lemma \ref{prop: alg cone}]
The proof is similar to that of Proposition \ref{prop: algebraic deformation inequality}. Indeed, choose for concreteness the orthonormal basis $E = \{\xi_i, \zeta_j, \eta_j\}$ for $1 \leq i \leq B=B(C),$ $1 \leq j \leq K=K(C)$ with $d(\zeta_j) = T^{\beta_j} \eta_j$ for $1 \leq j \leq K,$ where $\beta_j = \beta_j(C).$ Set $\beta_{\tot} = \beta_{\tot}(C) = \beta_1 + \ldots + \beta_K,$ and $N = N(C) = \dim_{\Lambda} C \otimes_{\Lambda}.$ As a $\lambzero$-module $Cone(S) = C \oplus C.$ The differential in $Cone(S)$ is given by $d_{Cone}:(x_0,x_1) \mapsto (d(x_0) ,-d(x_1) + S(x_0)).$ Note that since \[N' = N(Cone(S)) = \dim_{\Lambda} Cone(S) \otimes \Lambda = 2 N,\] and since $S$ induces the zero map on homology, \[B'=B(Cone(S)) = \dim_{\Lambda} H(Cone(S)) \otimes \Lambda = 2 \dim_{\Lambda} H(C) \otimes \Lambda = 2B.\] Hence, there are $K' = (N'-B')/2 = N-B = 2K$ verbose bar-lengths \[0 \leq \beta'_1 \leq \ldots \leq \beta'_{2K}\] in $Cone(S).$

The matrix of $d_{Cone}$ in the basis $\ol{E}$ of $C \oplus C$ induced by $E$ is of two-by-two block form, with the upper off-diagonal block being zero, and the diagonal blocks being the matrices $[d]_{E}$ and $[-d]_{E}$ respectively. Let $M$ be a $K$-minor of $[d]_E$ with $val(\det M) = \beta_{\tot}.$ Then the $2K$-minor $\ol{M}$ of $[d_{Cone}]_{\ol{E}}$ corresponding to the rows and columns of $M$ taken twice, is again of two-by-two block form, with $M$ and $-M$ being the diagonal blocks, the upper off-diagonal block being zero. Hence \[val \det(\ol{M}) = 2 \cdot val(\det M) = 2 \cdot \beta_{\tot}(C),\] whence as in the proof of Proposition \ref{prop: algebraic deformation inequality} we obtain that \[\beta_{\tot}(Cone(S)) = \beta'_1 + \ldots + \beta'_{2K} \leq 2 \cdot \beta_{\tot}(C).\]
\end{proof}

\section*{Acknowledgements}
I gratefully acknowledge my teacher Leonid Polterovich for his continuous support and, along with Paul Biran, Octav Cornea, Asaf Kislev, Vuka\v{s}in Stojisavljevi\'{c}, and Jingyu Zhao, for fruitful collaborations during which I learnt most of the tools that I use in this paper.

I thank Viktor Ginzburg, Ba\c{s}ak G\"{u}rel, Paul Seidel, and Nicholas Vichery, for very useful conversations at different stages of this project, and Felix Schlenk for comments on the exposition.

This work was initiated and partially carried out during my stay as a member in the Institute for Advanced Study, where I was supported by NSF grant No. DMS-1128155. I thank the IAS for a great research atmosphere, and Helmut Hofer for his hospitality, and interest in this project.

At the University of Montr\'{e}al I was supported by an NSERC Discovery Grant and by the Fonds de recherche du Qu\'{e}bec - Nature et technologies.


\bibliographystyle{abbrv}
\bibliography{bibliographyHZ}

\begin{thebibliography}{10}

\bibitem{AbouzaidBook}
M.~Abouzaid.
\newblock Symplectic cohomology and {V}iterbo's theorem.
\newblock In {\em Free loop spaces in geometry and topology}, volume~24 of {\em
  IRMA Lect. Math. Theor. Phys.}, pages 271--485. Eur. Math. Soc., Z\"urich,
  2015.

\bibitem{AK-simplehomotopy}
M.~Abouzaid and T.~Kragh.
\newblock Simple homotopy equivalence of nearby {L}agrangians.
\newblock {\em Acta Math.}, 220(2):207--237, 2018.

\bibitem{BauLes}
U.~Bauer and M.~Lesnick.
\newblock Induced matchings and the algebraic stability of persistence
  barcodes.
\newblock {\em J. Comput. Geom.}, 6(2):162--191, 2015.

\bibitem{BiranCorneaRigidityUniruling}
P.~Biran and O.~Cornea.
\newblock Rigidity and uniruling for {L}agrangian submanifolds.
\newblock {\em Geom. Topol.}, 13(5):2881--2989, 2009.

\bibitem{BiranCorneaS-Fukaya}
P.~Biran, O.~Cornea, and E.~Shelukhin.
\newblock Lagrangian shadows and triangulated categories.
\newblock Preprint, arXiv:1806.06630 [math.SG], 2018.

\bibitem{BramhamHofer-FirstSteps}
B.~Bramham and H.~Hofer.
\newblock First steps towards a symplectic dynamics.
\newblock In {\em Surveys in differential geometry. {V}ol. {XVII}}, volume~17
  of {\em Surv. Differ. Geom.}, pages 127--177. Int. Press, Boston, MA, 2012.

\bibitem{CdSGO-structure}
F.~Chazal, V.~de~Silva, M.~Glisse, and S.~Oudot.
\newblock {\em The structure and stability of persistence modules}.
\newblock SpringerBriefs in Mathematics. Springer, [Cham], 2016.

\bibitem{CCSGGO-proximity}
F.~Chazal, D.~C. Steiner, M.~Glisse, L.~J. Guibas, and S.~Y. Oudot.
\newblock Proximity of persistence modules and their diagrams.
\newblock In {\em Proceedings of the 25th Annual Symposium on Computational
  Geometry, SCG ’09,}, pages 237--246. ACM, 2009.

\bibitem{CineliGinzburg}
E.~Cineli and V.~L. Ginzburg.
\newblock On the iterated {H}amiltonian {F}loer homology.
\newblock Preprint arXiv:1902.06369, 2019.

\bibitem{CEH-stability}
D.~Cohen-Steiner, H.~Edelsbrunner, and J.~Harer.
\newblock Stability of persistence diagrams.
\newblock {\em Discrete Comput. Geom.}, 37(1):103--120, 2007.

\bibitem{Kerman-etal}
B.~Collier, E.~Kerman, B.~M. Reiniger, B.~Turmunkh, and A.~Zimmer.
\newblock A symplectic proof of a theorem of {F}ranks.
\newblock {\em Compos. Math.}, 148(6):1969--1984, 2012.

\bibitem{CrawBo}
W.~Crawley-Boevey.
\newblock Decomposition of pointwise finite-dimensional persistence modules.
\newblock {\em J. Algebra Appl.}, 14(5):1550066, 8, 2015.

\bibitem{EntovPolterovichCalabiQM}
M.~Entov and L.~Polterovich.
\newblock Calabi quasimorphism and quantum homology.
\newblock {\em Int. Math. Res. Not.}, (30):1635--1676, 2003.

\bibitem{EntovPolterovich-semisimple}
M.~Entov and L.~Polterovich.
\newblock Symplectic quasi-states and semi-simplicity of quantum homology.
\newblock In {\em Toric topology}, volume 460 of {\em Contemp. Math.}, pages
  47--70. Amer. Math. Soc., Providence, RI, 2008.

\bibitem{EntovPolterovich-rigid}
M.~Entov and L.~Polterovich.
\newblock Rigid subsets of symplectic manifolds.
\newblock {\em Compos. Math.}, 145(3):773--826, 2009.

\bibitem{Floer1}
A.~Floer.
\newblock Proof of the {A}rnol'd conjecture for surfaces and generalizations to
  certain {K}\"ahler manifolds.
\newblock {\em Duke Math. J.}, 53(1):1--32, 1986.

\bibitem{Floer2}
A.~Floer.
\newblock Morse theory for fixed points of symplectic diffeomorphisms.
\newblock {\em Bull. Amer. Math. Soc. (N.S.)}, 16(2):279--281, 1987.

\bibitem{Floer3}
A.~Floer.
\newblock Symplectic fixed points and holomorphic spheres.
\newblock {\em Comm. Math. Phys.}, 120(4):575--611, 1989.

\bibitem{Floer-MorseWitten}
A.~Floer.
\newblock Witten's complex and infinite-dimensional {M}orse theory.
\newblock {\em J. Differential Geom.}, 30(1):207--221, 1989.

\bibitem{Franks-sphere}
J.~Franks.
\newblock Geodesics on {$S^2$} and periodic points of annulus homeomorphisms.
\newblock {\em Invent. Math.}, 108(2):403--418, 1992.

\bibitem{Franks-NY}
J.~Franks.
\newblock Area preserving homeomorphisms of open surfaces of genus zero.
\newblock {\em New York J. Math.}, 2:1--19, electronic, 1996.

\bibitem{FranksHandel-growthrate}
J.~Franks and M.~Handel.
\newblock Periodic points of {H}amiltonian surface diffeomorphisms.
\newblock {\em Geom. Topol.}, 7:713--756, 2003.

\bibitem{Frauenfelder}
U.~Frauenfelder.
\newblock The {A}rnold-{G}ivental conjecture and moment {F}loer homology.
\newblock {\em Int. Math. Res. Not.}, (42):2179--2269, 2004.

\bibitem{FOOO-polydiscs}
K.~Fukaya, Y.-G. Oh, H.~Ohta, and K.~Ono.
\newblock Displacement of polydisks and {L}agrangian {F}loer theory.
\newblock {\em J. Symplectic Geom.}, 11(2):231--268, 2013.

\bibitem{GG-pseudorotations}
V.~Ginzburg and B.~G\"{u}rel.
\newblock Hamiltonian pseudo-rotations of projective spaces.
\newblock {\em Invent. Math.}, Online First:1--50, 2018.

\bibitem{Ginzburg-CC}
V.~L. Ginzburg.
\newblock The {C}onley conjecture.
\newblock {\em Ann. of Math.}, 172:1127--1180, 2010.

\bibitem{GG-local-gap}
V.~L. Ginzburg and B.~Z. G\"{u}rel.
\newblock Local {F}loer homology and the action gap.
\newblock {\em J. Symplectic Geom.}, 8(3):323--357, 2010.

\bibitem{GG-hyperbolic}
V.~L. Ginzburg and B.~Z. G\"{u}rel.
\newblock Hyperbolic fixed points and periodic orbits of {H}amiltonian
  diffeomorphisms.
\newblock {\em Duke Math. J.}, 163(3):565--590, 2014.

\bibitem{GG-nc}
V.~L. Ginzburg and B.~Z. G\"{u}rel.
\newblock Non-contractible periodic orbits in {H}amiltonian dynamics on closed
  symplectic manifolds.
\newblock {\em Compos. Math.}, 152(9):1777--1799, 2016.

\bibitem{GG-revisited}
V.~L. Ginzburg and B.~Z. G\"{u}rel.
\newblock Conley conjecture revisited.
\newblock {\em Int. Math. Res. Not. IMRN}, (3):761--798, 2019.

\bibitem{Gurel-nc}
B.~Z. G\"{u}rel.
\newblock On non-contractible periodic orbits of {H}amiltonian diffeomorphisms.
\newblock {\em Bull. Lond. Math. Soc.}, 45(6):1227--1234, 2013.

\bibitem{Hingston-torus}
N.~Hingston.
\newblock Subharmonic solutions of {H}amiltonian equations on tori.
\newblock {\em Ann. of Math. (2)}, 170(2):529--560, 2009.

\bibitem{HoferMetric}
H.~Hofer.
\newblock On the topological properties of symplectic maps.
\newblock {\em Proc. Roy. Soc. Edinburgh Sect. A}, 115(1-2):25--38, 1990.

\bibitem{HZ-book}
H.~Hofer and E.~Zehnder.
\newblock {\em Symplectic invariants and {H}amiltonian dynamics}.
\newblock Birkh\"auser Advanced Texts: Basler Lehrb\"ucher. [Birkh\"auser
  Advanced Texts: Basel Textbooks]. Birkh\"auser Verlag, Basel, 1994.

\bibitem{Kaledin}
D.~Kaledin.
\newblock Cartier isomorphism and {H}odge theory in the non-commutative case.
\newblock In {\em Arithmetic geometry}, volume~8 of {\em Clay Math. Proc.},
  pages 537--562. Amer. Math. Soc., Providence, RI, 2009.

\bibitem{KS-bounds}
A.~Kislev and E.~Shelukhin.
\newblock Bounds on spectral norms and applications.
\newblock Preprint arXiv:1810.09865, 2018.

\bibitem{Lalonde-McDuff-Energy}
F.~Lalonde and D.~McDuff.
\newblock The geometry of symplectic energy.
\newblock {\em Ann. of Math. (2)}, 141(2):349--371, 1995.

\bibitem{LeCalvez-growthrate1}
P.~Le~Calvez.
\newblock Periodic orbits of {H}amiltonian homeomorphisms of surfaces.
\newblock {\em Duke Math. J.}, 133(1):125--184, 2006.

\bibitem{LeCalvez-growthrate2}
P.~Le~Calvez.
\newblock Pourquoi les points p\'{e}riodiques des hom\'{e}omorphismes du plan
  tournent-ils autour de certains points fixes?
\newblock {\em Ann. Sci. \'{E}c. Norm. Sup\'{e}r. (4)}, 41(1):141--176, 2008.

\bibitem{LeclercqZapolsky}
R.~{Leclercq} and F.~{Zapolsky}.
\newblock Spectral invariants for monotone lagrangians.
\newblock {\em J. Topol. Anal.}
\newblock Available at https://doi.org/10.1142/S1793525318500267.

\bibitem{LSV-conj}
F.~Leroux, S.~Seyfaddini, and C.~Viterbo.
\newblock Barcodes and area-preserving homeomorphisms.
\newblock Preprint arXiv:1810.03139 [math.SG], 2018.

\bibitem{Liu-assoc}
G.~Liu.
\newblock Associativity of quantum multiplication.
\newblock {\em Comm. Math. Phys.}, 191(2):265--282, 1998.

\bibitem{Markl-ideal}
M.~Markl.
\newblock Ideal perturbation lemma.
\newblock {\em Comm. Algebra}, 29(11):5209--5232, 2001.

\bibitem{McDuffSalamon-BIG}
D.~{McDuff} and D.~{Salamon}.
\newblock {\em {$J$-holomorphic curves and symplectic topology. 2nd ed.}},
  volume~52.
\newblock Providence, RI: American Mathematical Society (AMS), 2nd ed. edition,
  2012.

\bibitem{McLean-geodesics}
M.~McLean.
\newblock Local {F}loer homology and infinitely many simple {R}eeb orbits.
\newblock {\em Algebr. Geom. Topol.}, 12(4):1901--1923, 2012.

\bibitem{Oh-specnorm}
Y.-G. {Oh}.
\newblock {Spectral invariants, analysis of the Floer moduli space, and
  geometry of the Hamiltonian diffeomorphism group.}
\newblock {\em {Duke Math. J.}}, 130(2):199--295, 2005.

\bibitem{OhBook}
Y.-G. Oh.
\newblock {\em Symplectic topology and {F}loer homology. {V}ols. 1 and 2},
  volume 27 and 28 of {\em New Mathematical Monographs}.
\newblock Cambridge University Press, Cambridge, 2015.
\newblock Symplectic geometry and pseudoholomorphic curves.

\bibitem{Orita2}
R.~Orita.
\newblock On the existence of infinitely many non-contractible periodic orbits
  of {H}amiltonian diffeomorphisms of closed symplectic manifolds.
\newblock {\em to appear in J. Symplectic Geom.}

\bibitem{Orita1}
R.~Orita.
\newblock Non-contractible periodic orbits in {H}amiltonian dynamics on tori.
\newblock {\em Bull. Lond. Math. Soc.}, 49(4):571--580, 2017.

\bibitem{OstroverAGT}
Y.~Ostrover.
\newblock Calabi quasi-morphisms for some non-monotone symplectic manifolds.
\newblock {\em Algebr. Geom. Topol.}, 6:405--434, 2006.

\bibitem{PSS}
S.~Piunikhin, D.~Salamon, and M.~Schwarz.
\newblock Symplectic {F}loer-{D}onaldson theory and quantum cohomology.
\newblock In {\em Contact and symplectic geometry ({C}ambridge, 1994)},
  volume~8 of {\em Publ. Newton Inst.}, pages 171--200. Cambridge Univ. Press,
  Cambridge, 1996.

\bibitem{PolterovichRosen}
L.~Polterovich and D.~Rosen.
\newblock {\em Function theory on symplectic manifolds}, volume~34 of {\em CRM
  Monograph Series}.
\newblock American Mathematical Society, Providence, RI, 2014.

\bibitem{PolShe}
L.~Polterovich and E.~Shelukhin.
\newblock Autonomous {H}amiltonian flows, {H}ofer's geometry and persistence
  modules.
\newblock {\em Selecta Math. (N.S.)}, 22(1):227--296, 2016.

\bibitem{PolSheSto}
L.~Polterovich, E.~Shelukhin, and V.~Stojisavljevi\'c.
\newblock Persistence modules with operators in {M}orse and {F}loer theory.
\newblock {\em Mosc. Math. J.}, 17(4):757--786, 2017.

\bibitem{Pozniak}
M.~Po\'{z}niak.
\newblock Floer homology, {N}ovikov rings and clean intersections.
\newblock In {\em Northern {C}alifornia {S}ymplectic {G}eometry {S}eminar},
  volume 196 of {\em Amer. Math. Soc. Transl. Ser. 2}, pages 119--181. Amer.
  Math. Soc., Providence, RI, 1999.

\bibitem{RuanTian-qh1}
Y.~Ruan and G.~Tian.
\newblock A mathematical theory of quantum cohomology.
\newblock {\em Math. Res. Lett.}, 1(2):269--278, 1994.

\bibitem{RuanTian-qh2}
Y.~Ruan and G.~Tian.
\newblock A mathematical theory of quantum cohomology.
\newblock {\em J. Differential Geom.}, 42(2):259--367, 1995.

\bibitem{Salamon-lectures}
D.~Salamon.
\newblock Lectures on {F}loer homology.
\newblock In {\em Symplectic geometry and topology ({P}ark {C}ity, {UT},
  1997)}, volume~7 of {\em IAS/Park City Math. Ser.}, pages 143--229. Amer.
  Math. Soc., Providence, RI, 1999.

\bibitem{SalamonZehnder}
D.~Salamon and E.~Zehnder.
\newblock Morse theory for periodic solutions of {H}amiltonian systems and the
  {M}aslov index.
\newblock {\em Comm. Pure Appl. Math.}, 45(10):1303--1360, 1992.

\bibitem{Schwarz:action-spectrum}
M.~Schwarz.
\newblock On the action spectrum for closed symplectically aspherical
  manifolds.
\newblock {\em Pacific J. Math.}, 193(2):419--461, 2000.

\bibitem{SeidelMCG}
P.~Seidel.
\newblock Symplectic {F}loer homology and the mapping class group.
\newblock {\em Pacific J. Math.}, 206(1):219--229, 2002.

\bibitem{SeidelBook}
P.~Seidel.
\newblock {\em Fukaya categories and {P}icard-{L}efschetz theory}.
\newblock Zurich Lectures in Advanced Mathematics. European Mathematical
  Society (EMS), Z\"urich, 2008.

\bibitem{Seidel-pants}
P.~Seidel.
\newblock The equivariant pair-of-pants product in fixed point {F}loer
  cohomology.
\newblock {\em Geom. Funct. Anal.}, 25(3):942--1007, 2015.

\bibitem{S-Zoll}
E.~Shelukhin.
\newblock Viterbo conjecture for {Z}oll symmetric spaces.
\newblock Preprint arXiv:1811.05552 [math.SG], 2018.

\bibitem{SZhao-pants}
E.~Shelukhin and J.~Zhao.
\newblock The $\zp$-equivariant product-isomorphism in fixed point {F}loer
  homology.
\newblock Preprint, arXiv:1905.03666, 2019.

\bibitem{Tonkonog-commuting}
D.~Tonkonog.
\newblock Commuting symplectomorphisms and {D}ehn twists in divisors.
\newblock {\em Geom. Topol.}, 19(6):3345--3403, 2015.

\bibitem{Usher-sharp}
M.~{Usher}.
\newblock {The sharp energy-capacity inequality.}
\newblock {\em {Commun. Contemp. Math.}}, 12(3):457--473, 2010.

\bibitem{UsherBD1}
M.~Usher.
\newblock Boundary depth in {F}loer theory and its applications to
  {H}amiltonian dynamics and coisotropic submanifolds.
\newblock {\em Israel J. Math.}, 184:1--57, 2011.

\bibitem{UsherBD2}
M.~Usher.
\newblock Hofer's metrics and boundary depth.
\newblock {\em Ann. Sci. \'Ec. Norm. Sup\'er. (4)}, 46(1):57--128 (2013), 2013.

\bibitem{UsherZhang}
M.~Usher and J.~Zhang.
\newblock Persistent homology and {F}loer--{N}ovikov theory.
\newblock {\em Geom. Topol.}, 20(6):3333--3430, 2016.

\bibitem{Viterbo-specGF}
C.~Viterbo.
\newblock Symplectic topology as the geometry of generating functions.
\newblock {\em Math. Ann.}, 292(4):685--710, 1992.

\bibitem{Witten-2d}
E.~Witten.
\newblock Two-dimensional gravity and intersection theory on moduli space.
\newblock In {\em Surveys in differential geometry ({C}ambridge, {MA}, 1990)},
  pages 243--310. Lehigh Univ., Bethlehem, PA, 1991.

\bibitem{Zap:Orient}
F.~Zapolsky.
\newblock {The Lagrangian Floer-quantum-PSS package and canonical orientations
  in Floer theory}.
\newblock {\em Preprint}, 2015.
\newblock arXiv:1507.02253 [math.SG].

\bibitem{CarlZom}
A.~Zomorodian and G.~Carlsson.
\newblock Computing persistent homology.
\newblock {\em Discrete Comput. Geom.}, 33(2):249--274, 2005.

\end{thebibliography}

\end{document}